\newtheoremstyle{modthm}
     {15pt}
     {3ptpt}
     {\itshape}
     {}
     {\bfseries}
     {.}
     {.5em}
     {}
\newtheoremstyle{modrem}
     {15pt}
     {0pt}
     {\rmfamily}
     {}
     {\itshape}
     {.}
     {.5em}
     {}
\theoremstyle{modthm}
\newtheorem{thm}{Theorem}
\newtheorem{prop}{Proposition}[section]
\newtheorem{lem}{Lemma}[section]
\newtheorem*{acknowledgment}{Acknowledgments}
\newtheorem*{theorem*}{Theorem}
\newtheorem{cor}{Corollary}
\theoremstyle{modrem}
\newtheorem{rem}{Remark}
\def\be{\begin{equation}}
\def\ee{\end{equation}}
\newcommand{\Div}{\mathrm{div}}
\newcommand{\R}{\mathbb{R}}
\begin{document}
\title[Critical $N$-dimensional Burgers' Equation]{Regularity of solutions for the critical $N$-dimensional Burgers' equation. }
\author[Chan]{Chi Hin Chan}
\address{Department of Mathematics, The University of Texas at Austin}
\email{cchan@math.utexas.edu}

\author[Czubak]{Magdalena Czubak}
\address{Department of Mathematics, University of Toronto}
\email{czubak@math.toronto.edu}

\begin{abstract}  
We consider the fractional Burgers' equation on $\R^N$ with the critical dissipation term.  We follow the parabolic De-Giorgi's method of Caffarelli and Vasseur \cite{Driftdiffusion} and show existence of smooth solutions given any initial datum in $L^2(\R^N)$.
\end{abstract}
\maketitle
\maketitle
\section{Introduction}
In this paper we investigate the regularity of the solutions to the critical $N$-dimensional Burgers' equation.  The equation is given by
\begin{equation}\label{Burger}
\partial_{t}\theta + \sum_{j=1}^{N} \theta \cdot \partial_{j} \theta = - (-\triangle)^{\frac{1}{2}} \theta,
\end{equation}
where $\theta: [0,\infty) \times \R^N \rightarrow \R$.  \eqref{Burger} is called critical, because of the invariance with respect to the scaling transformation given by
\[
\theta_\lambda(t,x)=\theta(\lambda t,\lambda x),
\]
and is a special case of
\[
\partial_{t}\theta + \sum_{j=1}^{N} \theta \cdot \partial_{j} \theta = - (-\triangle)^{\alpha} \theta, 
\]
where $0<\alpha<1$.  In a recent paper Kiselev, Nazarov and Shterenberg \cite{Kiselevmain} have done an extensive study for the $1$-dimensional Burgers' equation in the periodic setting, which covers the subcritical case $\frac{1}{2} < \alpha < 1$, the critical case $\alpha = \frac{1}{2}$, and also the supercritical case $0< \alpha < \frac{1}{2}$.  Among the results obtained in \cite{Kiselevmain}, the authors prove the global in time existence of locally H\"older continuous solutions for the critical case $\alpha = \frac{1}{2}$ with respect to periodic initial datum $\theta_{0} \in L^{p}(\mathbb{R})$ with $1<p<\infty$.  On the other hand,  Dong, Du and Li \cite{DongDuLi} also cover all the values of $\alpha$, but both with and without the periodic setting, and with the emphasis on the finite time blow up in the supercritical case.  In another recent work, Miao and Wu \cite{Miao} establish global well-posedness of the critical Burgers' equation in critical Besov spaces $B_{p,1}^{1/p}(\mathbb{R})$.  For further background and motivation for the fractional Burgers' equation we refer our readers to \cite{Kiselevmain},\cite{Miao},\cite{DongDuLi}.\newline\indent
The main goal of this paper is to establish the following theorem
\begin{thm}\label{mainthm}
Given any initial datum $u_{0} \in L^{2}(\mathbb{R}^{N})$ there exists a global weak solution $\in L^{\infty}(0,\infty ; L^{2}(\mathbb{R}^{N})) \cap L^{2} (0,\infty ; \dot H^{\frac{1}{2}}(\mathbb{R}^{N}))$ of the critical Burgers' equation \eqref{Burger} such that
\begin{itemize}
\item $\theta (0,\cdot ) = \theta_{0} $ in the $L^{2}(\mathbb{R}^{N})$-sense.
\item For every $t>0$, we have $\theta(t,\cdot) \in L^{\infty}(\mathbb{R}^{N})$.
\item $\theta$ is locally H\"older continuous.
\end{itemize}
\end{thm}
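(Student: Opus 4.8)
The plan is to adapt the Caffarelli--Vasseur program for the critical surface quasi-geostrophic equation to the scalar Burgers' setting, where $\theta$ itself (rather than a divergence-free drift built from it) plays the role of the transporting velocity. First I would establish the global existence of a weak solution in the energy class $L^{\infty}(0,\infty;L^{2}) \cap L^{2}(0,\infty;\dot H^{1/2})$ by a standard vanishing-viscosity (or Galerkin) approximation: add $\varepsilon \triangle \theta$ to the right-hand side, solve the regularized equation, derive the basic energy inequality by testing against $\theta$ (the nonlinear term $\sum_j \theta \partial_j \theta = \frac12 \sum_j \partial_j(\theta^2)$ drops out after integration by parts), and pass to the limit using Aubin--Lions compactness. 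The operator $(-\triangle)^{1/2}$ contributes exactly the $\dot H^{1/2}$ control in the energy estimate, which is why $\alpha = \tfrac12$ is the borderline case matching the scaling.

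Next comes the $L^{\infty}$ bound for positive times. Here I would run the first De Giorgi argument: for a weak solution, the truncated functions $\theta_{k} = (\theta - C_{k})_{+}$ with levels $C_{k} = M(1 - 2^{-k})$ satisfy a level-set energy inequality. Because the drift in the transport term is $\theta$ itself, one cannot immediately assume it lies in a good space, so the argument must be bootstrapped: one first shows that if $\theta_{0} \in L^2$ then for each fixed $t_0 > 0$ the solution is bounded on $[t_0, \infty)$, using the nonlinear iteration inequality for the quantities $\mathcal{E}_k = \sup_{t} \|\theta_k(t)\|_{L^2}^2 + \|\theta_k\|_{L^2\dot H^{1/2}}^2$ together with the Sobolev embedding $\dot H^{1/2}(\R^N) \hookrightarrow L^{2N/(N-1)}$ and the nonlinear structure $\theta\,\partial_j\theta\,\mathbf{1}_{\{\theta > C_k\}}$, which after integration by parts produces terms controlled by lower truncation levels. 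The key point is that the nonlinearity is ``quadratic but localizable'': on the set $\{\theta_{k-1} > 0\}$ one has $|\theta| \le M + \theta_{k-1}$, so the transport term is dominated by $M\|\nabla \theta_k\|$-type contributions plus a quadratically small remainder, and the De Giorgi iteration $\mathcal{E}_k \le C^k \mathcal{E}_{k-1}^{1+\beta}$ closes once $M$ is taken large enough in terms of $\|\theta_0\|_{L^2}$ and $t_0$.

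Having $\theta \in L^{\infty}((t_0,\infty)\times\R^N)$, I would then prove local Hölder continuity via the second (oscillation-decay) De Giorgi lemma adapted to the nonlocal parabolic operator. The structure here is: (i) an ``isoperimetric'' or ``measure-to-pointwise'' step showing that if $\theta \le 1$ on a cylinder and the set where $\theta$ is close to $1$ is small in measure, then $\theta$ is strictly below $1$ on a smaller cylinder; (ii) the de Giorgi intermediate-value lemma bridging the measure of $\{\theta \approx 1\}$ and $\{\theta \approx -1\}$, which for the nonlocal operator requires the Caffarelli--Vasseur harmonic extension to the upper half-space $\R^{N+1}_+$ so that $(-\triangle)^{1/2}$ becomes the Dirichlet-to-Neumann map of a local (degenerate-elliptic, but here actually the ordinary Laplacian since $\alpha=1/2$) problem; (iii) iterating these to get $\mathrm{osc}_{Q_r}\theta \le (1-\lambda)\,\mathrm{osc}_{Q_{2r}}\theta$, hence a Hölder modulus. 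Once the solution is Hölder, one can treat $\theta$ as a Hölder-continuous drift and bootstrap via Schauder-type (or further De Giorgi / Riesz-potential) estimates to full $C^{\infty}$ smoothness for $t>0$, and the $t\to 0^+$ continuity in $L^2$ comes from the energy inequality.

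The main obstacle I anticipate is handling the nonlinear drift term in both De Giorgi lemmas: unlike the SQG case, the velocity $\theta$ is not divergence-free and is not a priori bounded, so the transport term $\sum_j \theta\,\partial_j\theta$ cannot simply be moved onto a derivative of the test function for free. The fix is to exploit the conservative form $\tfrac12\partial_j(\theta^2)$ and the truncation structure — on $\{\theta > C_k\}$ one has $\theta^2 = (\theta_k + C_k)^2$, and the ``bad'' quadratic piece $\theta_k^2$ is exactly the quantity being iterated, so it can be absorbed — but making this quantitative, uniformly in the extension variable and across scales, and verifying that the De Giorgi constants do not degenerate, is where the real work lies. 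A secondary technical point is that the $L^{\infty}$ estimate only holds away from $t=0$, so all the oscillation estimates must be stated on cylinders contained in $\{t>0\}$, with the Hölder constant depending on the distance to the initial time.
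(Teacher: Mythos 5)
Your high-level plan matches the paper closely: a regularized approximation followed by a first De Giorgi iteration to get $L^\infty$ bounds for $t>0$, then a local energy inequality via the Caffarelli--Silvestre harmonic extension, and finally the De Giorgi oscillation machinery (measure-to-pointwise lemma, isoperimetric lemma, oscillation lemma, geometric iteration) to get H\"older continuity, with bootstrapping to smoothness at the end. So the architecture is right. But you have mislocated the genuine obstruction, and the step you wave at as ``where the real work lies'' is in fact exactly where your plan has a gap.

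The issue is not that the drift $\theta$ is unbounded (after the first De Giorgi step it is bounded on $[t_0,\infty)\times\R^N$), nor really that it fails to be divergence-free (the conservative form $\tfrac12\partial_j(\theta^2)$ handles the integration by parts, as you note). The issue is that the oscillation iteration does not run on $\theta$ itself: at each scale it must be run on an affine renormalization $u_k=\beta_k(\theta-L_k)$ with $\beta_k\to\infty$ and varying shifts $L_k$. In the SQG case of Caffarelli--Vasseur, any such $u$ again solves the same equation (the drift is a fixed linear operator applied to $\theta$), so the local energy inequality holds for $u$ with the \emph{same} universal constant at every scale. For Burgers', $u=\beta(\theta-L)$ solves a \emph{different} transport equation $\partial_t u+\sum_j\frac1\beta(u+L\beta)\partial_j u=-(-\triangle)^{1/2}u$, and the local energy inequality for $u$ carries a constant of the form $2NC_N(|L|+\|\theta\|_{L^\infty})^2$ that would blow up if $|L|$ were not controlled. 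The paper's actual contribution is (i) to notice that only the local energy inequality, not the PDE itself, is used in the downstream De Giorgi arguments, so one may derive that inequality directly for every admissible $u=\beta(\theta-L)$; and (ii) to verify, in Lemma~\ref{Lemma1}, Lemma~\ref{Lemma2}, the Oscillation Lemma, and the final geometric iteration, that the accumulated shift $L$ (a convergent geometric series in the rescaling parameter) stays bounded by a fixed multiple of $C_\theta=\|\theta\|_{L^\infty}$, so the local-energy constant is uniform across scales. Your heuristic of exploiting $\theta^2=(\theta_k+C_k)^2$ on a super-level set addresses the single-scale $L^\infty$ step, not this multi-scale stability problem, and your proposal never raises the $L$-dependence of the local energy constant. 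A smaller remark: for the existence step the paper also truncates the nonlinearity via $\psi_R(\theta)\partial_j\theta$ in addition to adding $\varepsilon\triangle\theta$, both to make the Galerkin scheme work in general $N$ and to justify the vanishing of $\int\psi_R(\theta)\,\partial_j\theta\,(\theta-L)_+\,dx$; your sketch omits this regularization.
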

\begin{cor}\label{cor1}
The solution $\theta$ obtained in Theorem \ref{mainthm} is smooth.
\end{cor}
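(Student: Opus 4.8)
The plan is to bootstrap from the conclusions of Theorem~\ref{mainthm} --- namely that $\theta$ is bounded on $[\tau,T]\times\R^{N}$ for all $0<\tau<T$ and is locally H\"older continuous, say $\theta\in C^{\alpha}_{\mathrm{loc}}$ for some $\alpha\in(0,1)$ in the parabolic sense --- all the way to $C^{\infty}$. The first move is to read \eqref{Burger} as the \emph{linear} nonlocal drift--diffusion equation
\[
\partial_{t}\theta+(-\triangle)^{\frac12}\theta+b\cdot\nabla\theta=0,\qquad b=(\theta,\dots,\theta),
\]
and to regard the coefficient $b$ (equivalently the flux $\tfrac12\theta^{2}(1,\dots,1)$) as \emph{given}. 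Under the scaling $\theta_{\lambda}(t,x)=\theta(\lambda t,\lambda x)$ that makes \eqref{Burger} critical, a bounded drift is exactly critical for this linear problem, so an $L^{\infty}$ coefficient alone leaves the linear theory borderline; but Theorem~\ref{mainthm} already furnishes $b\in C^{\alpha}_{\mathrm{loc}}$ with $\alpha>0$, which is strictly subcritical, and it is with subcritical coefficients that one genuinely gains a derivative. This is precisely the role of first running the De Giorgi iteration.

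For the gain I would invoke the interior Schauder estimate for $\partial_{t}+(-\triangle)^{\frac12}$ (a nonlocal operator of order one in the scaling $t\sim x$): if $\partial_{t}u+(-\triangle)^{\frac12}u=g$ on a space--time cylinder with $g\in C^{\beta}$, $\beta>0$, $1+\beta\notin\mathbb{Z}$, then $u\in C^{1+\beta}$ on an interior cylinder, with a bound by $\|u\|_{L^{\infty}}+\|g\|_{C^{\beta}}$. The nonlocality is handled in the standard way, writing $\theta=\chi\theta+(1-\chi)\theta$ with $\chi$ a spatial cutoff around the point under consideration: the local piece is treated by the interior estimate, and the far piece contributes to $(-\triangle)^{\frac12}\theta$ a term that is smooth near the point and dominated by $\|\theta\|_{L^{\infty}}$. (Equivalently one may run the bootstrap through Duhamel's formula with the Poisson kernel $e^{-t(-\triangle)^{1/2}}$ in suitable parabolic function spaces, in the spirit of \cite{Miao}.) Since $b\in C^{\alpha}_{\mathrm{loc}}$ and products of locally bounded H\"older functions are locally H\"older, $b\cdot\nabla\theta\in C^{\alpha}_{\mathrm{loc}}$ whenever $\theta\in C^{1+\alpha}_{\mathrm{loc}}$; feeding this into the Schauder estimate shows $C^{1+\alpha}$ is a consistent output, and since the bounded solution of the linear drift--diffusion equation with this fixed $C^{\alpha}$ drift is unique --- and hence equals $\theta$ --- we conclude $\theta\in C^{1+\alpha}_{\mathrm{loc}}$.

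From here the induction is routine: differentiating the equation, $\psi=\partial_{m}\theta$ solves $\partial_{t}\psi+(-\triangle)^{\frac12}\psi+b\cdot\nabla\psi=-(\partial_{m}b)\cdot\nabla\theta$, whose source now lies in $C^{\alpha'}_{\mathrm{loc}}$ (for $\alpha'<\alpha$ chosen so no exponent becomes an integer) and whose drift is at least $C^{\alpha}_{\mathrm{loc}}$, so the same Schauder-plus-uniqueness argument gives $\psi\in C^{1+\alpha'}_{\mathrm{loc}}$, i.e.\ $\theta\in C^{2+\alpha'}_{\mathrm{loc}}$; iterating, $\theta\in C^{k+\alpha'}_{\mathrm{loc}}$ for every $k$, so $\theta$ is $C^{\infty}$ in $x$ on $(0,\infty)\times\R^{N}$ with all local norms finite, and joint smoothness in $(t,x)$ follows by reading $\partial_{t}\theta=-(-\triangle)^{\frac12}\theta-\tfrac12\sum_{j}\partial_{j}(\theta^{2})$ off the equation and differentiating in $t$. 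In particular the regularization is instantaneous for $t>0$.

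The step I expect to be the genuine obstacle is the first gain, from $C^{\alpha}$ to $C^{1+\alpha}$. One must set up Schauder (or Duhamel) theory for the \emph{nonlocal} operator $\partial_{t}+(-\triangle)^{\frac12}$ with coefficients that are only H\"older continuous and merely decaying rather than compactly supported, carefully tracking the nonlocal tail terms produced by the spatial cutoffs, and one must supply the uniqueness statement needed to identify the Schauder solution with $\theta$; note that since the De Giorgi method only guarantees \emph{some} $\alpha>0$, one cannot afford product estimates that would require $2\alpha-1>0$, which is exactly why the linear drift form of Burgers' equation --- where the coefficient is literally $\theta$ --- is exploited rather than the quadratic nonlinearity directly. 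Once this first step is in place, every later step has coefficients at least as regular as in the classical theory and nothing new occurs.
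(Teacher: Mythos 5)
Your strategy --- reread \eqref{Burger} as a linear drift--diffusion equation with a given $C^{\alpha}_{\mathrm{loc}}$ drift $b=\theta$ and bootstrap via Schauder --- is a legitimate general philosophy, but the central step as you have written it does not constitute a proof, and it is also not the route the paper takes. The paper (Section~\ref{higher}) instead applies Duhamel's formula with the Poisson kernel $P(t,x)=C_N\,t/(t^2+|x|^2)^{(N+1)/2}$, keeps the nonlinearity in \emph{conservative} form $\tfrac12\sum_j\partial_j(\theta^2)$, translates so that $\theta(y_0)=0$, and exploits the oddness of $\partial_jP$ in $x$ to cancel the leading singularity. The translation trick is the crux: after it, $|\theta(y_1)|\lesssim|y_1-y_0|^{\alpha}$ so $\theta^2\lesssim|y_1-y_0|^{2\alpha}$, and convolving against the critically singular kernel $\partial_jP$ yields a $h^{2\alpha}$ increment, i.e.\ $C^{\alpha}\Rightarrow C^{2\alpha}$ provided $2\alpha<1$. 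The paper iterates this to reach $C^{\gamma}$ for every $\gamma<1$ and then cites Appendix~B of \cite{Driftdiffusion} to cross the Lipschitz threshold. Your closing remark that the quadratic nonlinearity should be avoided because it would ``require $2\alpha-1>0$'' has the sign reversed: the paper's argument works precisely in the regime $2\alpha<1$, which is the regime De Giorgi delivers, and the genuine difficulty is crossing $\alpha=1/2$ and then $\alpha=1$, which the paper delegates to \cite{Driftdiffusion}.

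The gap in your argument is the step from $C^{\alpha}$ to $C^{1+\alpha}$. You invoke an interior Schauder estimate for $\partial_t+(-\triangle)^{1/2}$ alone and then propose to treat $b\cdot\nabla\theta$ as the inhomogeneity $g$; but $g$ only belongs to $C^{\alpha}$ \emph{if one already knows} $\theta\in C^{1+\alpha}$. Declaring that ``$C^{1+\alpha}$ is a consistent output'' and appealing to uniqueness does not close this loop: to run that argument one would need to (i) independently construct a $C^{1+\alpha}_{\mathrm{loc}}$ solution of the linear problem $\partial_t u+(-\triangle)^{1/2}u+b\cdot\nabla u=0$ with the same data, which already requires a genuine Schauder theory for the full drift--diffusion operator (perturbation off frozen coefficients across dyadic scales), not for $\partial_t+(-\triangle)^{1/2}$ with the drift relegated to the right-hand side, and (ii) establish uniqueness of bounded weak solutions to that linear nonlocal problem on $\R^N$, neither of which is supplied. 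In short, you have identified the right place where the real work lives (``the genuine obstacle''), but have not done that work; the paper sidesteps it entirely by working with the Duhamel representation and the cancellation structure of $\partial_jP$, gaining only $\alpha$ (not $1$) regularity per step, which is exactly what the available information supports.
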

As far as we know this is the first result for the regularity of the $N$-dimensional Burgers' equation.  However, we are also aware of the $1$-dimensional regularity results in the periodic setting due to Kiselev, Nazarov and Shterenberg in \cite{Kiselevmain}, but \emph{we} do not know whether their method of modulus of continuity can be generalized to the $N$-dimensional setting.  In addition, we would like to emphasize that the method of our proof relies completely on the methods of Caffarelli and Vasseur \cite{Driftdiffusion}.  In \cite{Driftdiffusion} authors develop a very delicate parabolic De-Giorgi's method, which leads them to the global smooth solutions for the critical quasi-geostrophic equation in the $N$-dimensional setting.   We add here that Kiselev, Nazarov, and Volberg \cite{Kiselevquasi} also obtain the same existence result as \cite{Driftdiffusion} in $2$-dimensional setting by using the method of modulus of continuity.  Moreover the method of modulus of continuity is also employed in \cite{Kiselevmain}, \cite{Miao}.\\
\indent
Before we explain the way in which our paper originates from \cite{Driftdiffusion}, we briefly remark on the regularity problem of solutions for the quasi-geostrophic equation, which is a question parallel to the regularity problem of solutions for the Burgers' equation.  Since the finding of the global weak solutions by Resnick in his thesis \cite{Resnickthesis}, there has been a significant amount of work devoted to addressing the existence and uniqueness of smooth solutions for the quasi-geostrophic equation (see for example, \cite{Constantinone},\cite{Constantintwo},\cite{Dongquasi},\cite{DongandNatasa}).  Of course, we have to mention that the existence of global smooth solutions for the critical quasi-geostrophic equation with respect to initial datum $\in L^{2}(\mathbb{R}^{N})$ has recently been established independently by Caffarelli and Vasseur~\cite{Driftdiffusion}, and Kiselev, Nazarov and Volberg~\cite{Kiselevquasi}.\newline\indent
We are now ready to clarify the relationship between our paper and the work of Caffarelli and Vasseur \cite{Driftdiffusion}.  As we have mentioned, the purpose of this paper is to perform suitable modifications on the parabolic De-Giorgi's method developed in \cite{Driftdiffusion}, so that, after our modifications, such a parabolic De-Giorgi's method will give the existence of locally H\"older continuous solutions to the critical $N$-dimensional Burgers' equation with respect to the initial datum $\theta_{0} \in L^{2}(\mathbb{R}^{N})$.  
We would like to bring to the readers' attention the following main issue.\newline\indent
In \cite{Driftdiffusion} the authors study the following critical $N$-dimensional quasi-geostrophic equation
\begin{gather*} 
\partial_{t} \theta + v\cdot \nabla \theta = - (-\triangle )^{\frac{1}{2}} \theta ,\\
\Div v = 0 ,\\
v_{j} = R_{j} [\theta ] ,
\end{gather*}
where $\theta : (0, \infty )\times \mathbb{R}^{N} \rightarrow \mathbb{R}$ is a scalar valued solution and $v$ is the velocity field related to $\theta$ by some selected singular integral operators $R_{j}$. Besides characterizing the fractional Laplacian $(-\triangle )^{\frac{1}{2}}$ via harmonic extension of functions to the upper half plane (see \cite{CaffarelliSilvestre} for more on the harmonic extension), one of the key stepping stones in \cite{Driftdiffusion} is the following local energy inequality appearing in section 3 of \cite{Driftdiffusion}.
\begin{prop}
(Caffarelli and Vasseur \cite{Driftdiffusion}) Let $\theta : (0, T )\times \mathbb{R}^{N} \rightarrow \mathbb{R}$ be a weak solution for the critical $N$-dimensional quasi-geostrophic equation for which the respective velocity field $v : (0, T )\times \mathbb{R}^{N} \rightarrow \mathbb{R}^{N}$ verifies
\begin{equation*}
\|v\|_{L^{\infty}(0,T ; BMO(\mathbb{R}^{N}))} + \sup_{t\in [0,T]} \Big\vert\int_{B_{2}} v(t,x) dx\Big\vert \leqslant C_{v}.
\end{equation*}
Then it follows that $\theta$ satisfies the following local energy inequality for some universal constant $\Phi_{v}$ depending only on $C_{v}$
\begin{equation}\label{QuasiLEI}
\begin{split}
&\frac{1}{2}\int_{\sigma}^{t} \int_{B_{2}^{*}} |\nabla (\eta \theta_{+}^{*})|^{2} dx dz ds
+ \frac{1}{2} \int_{B_{2}} (\eta \theta_{+})^{2}(t,x) dx\\
&\quad \leqslant \frac{1}{2} \int_{B_{2}} (\eta \theta_{+})^{2}(\sigma ,x) dx 
 + \Phi_{v} \int_{\sigma}^{t}\int_{B_{2}}|\nabla \eta |^{2} \theta_{+}^{2}
+ \int_{\sigma}^{t}\int_{B_{2}^{*}} |\nabla \eta |^{2} (\theta_{+}^{*})^{2} dx dz dt,
\end{split}
\end{equation}
where $(\sigma,t)\in (0,T)$, $B_{2} = \{x \in \mathbb{R}^{N} : |x| < 2\}$, $B_{2}^{*} = B_{2}\times [0,2]$, $\theta^{*}$ is the harmonic extension of $\theta$ to $\mathbb{R}^{N}\times [0, \infty )$, $\theta_{+}^{*} = \theta^{*} \chi_{\{  \theta^{*} > 0 \}},$ $\theta_{+} = \theta \chi_{\{  \theta > 0 \}},$ and $\eta$ is some cut off function supported in $B_{2}^{*}$.
\end{prop}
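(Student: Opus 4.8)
The plan is to exploit the harmonic extension characterization of the half-Laplacian. Writing $\theta^{*}$ for the harmonic extension of $\theta$ to $\mathbb{R}^{N}\times[0,\infty)$, one has, up to a normalizing constant, $-(-\triangle)^{1/2}\theta(x)=\partial_{z}\theta^{*}(x,0)$, so that for each fixed time the critical quasi-geostrophic equation is equivalent to the pair $\triangle_{x,z}\theta^{*}=0$ in $B_{2}^{*}$ and $\partial_{t}\theta+v\cdot\nabla\theta=\partial_{z}\theta^{*}|_{z=0}$ on $B_{2}$. I would then test the interior equation against $\eta^{2}\theta_{+}^{*}$, that is, integrate $\triangle_{x,z}\theta^{*}\cdot(\eta^{2}\theta_{+}^{*})=0$ by parts over $B_{2}^{*}$. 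Since $\eta$ is supported strictly inside $B_{2}^{*}$, the lateral and top boundary terms vanish, and the only surviving boundary integral is over $\{z=0\}$, where it equals $-\int_{B_{2}}\eta^{2}\theta_{+}\,\partial_{z}\theta^{*}|_{z=0}\,dx=-\int_{B_{2}}\eta^{2}\theta_{+}\,(\partial_{t}\theta+v\cdot\nabla\theta)\,dx$; along the way one uses that $\theta_{+}^{*}=\max(\theta^{*},0)$ is in $H^{1}_{\mathrm{loc}}$ and that $\nabla\theta^{*}\cdot\nabla\theta_{+}^{*}=|\nabla\theta_{+}^{*}|^{2}$ almost everywhere.

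The second step is to unpack the three resulting terms. The interior term becomes $\int_{B_{2}^{*}}|\nabla(\eta\theta_{+}^{*})|^{2}-\int_{B_{2}^{*}}|\nabla\eta|^{2}(\theta_{+}^{*})^{2}$ via the identity $\eta^{2}|\nabla\theta_{+}^{*}|^{2}+2\eta\theta_{+}^{*}\nabla\eta\cdot\nabla\theta_{+}^{*}=|\nabla(\eta\theta_{+}^{*})|^{2}-|\nabla\eta|^{2}(\theta_{+}^{*})^{2}$. For the time term, $\theta_{+}\partial_{t}\theta=\tfrac12\partial_{t}(\theta_{+}^{2})$, so integrating in $s\in(\sigma,t)$ and using that $\eta$ does not depend on $t$ yields $\tfrac12\int_{B_{2}}(\eta\theta_{+})^{2}(t)-\tfrac12\int_{B_{2}}(\eta\theta_{+})^{2}(\sigma)$. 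For the drift term, $\theta_{+}\nabla\theta=\tfrac12\nabla(\theta_{+}^{2})$ together with $\Div v=0$ converts $-\int_{B_{2}}\eta^{2}\theta_{+}\,v\cdot\nabla\theta$ into $\int_{B_{2}}\eta\,(v\cdot\nabla\eta)\,\theta_{+}^{2}$. Moving $\int_{B_{2}^{*}}|\nabla\eta|^{2}(\theta_{+}^{*})^{2}$ to the right-hand side then already produces \eqref{QuasiLEI}, provided the drift term can be bounded by $\Phi_{v}\int_{\sigma}^{t}\int_{B_{2}}|\nabla\eta|^{2}\theta_{+}^{2}$ after absorbing a small multiple of $\int_{\sigma}^{t}\int_{B_{2}^{*}}|\nabla(\eta\theta_{+}^{*})|^{2}$ into the left.

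The main obstacle is precisely this estimate for $\int_{\sigma}^{t}\int_{B_{2}}\eta\,(v\cdot\nabla\eta)\,\theta_{+}^{2}$, which is the only place the hypotheses on $v$ enter. I would split $v=\bar v+(v-\bar v)$ with $\bar v(s)=\frac{1}{|B_{2}|}\int_{B_{2}}v(s,x)\,dx$: the contribution of the constant part is controlled (after an integration by parts in $x$) by the bound $|\bar v|\leqslant C\,C_{v}$, while the oscillatory part $v-\bar v$, bounded in $BMO(\mathbb{R}^{N})$ uniformly in $s$, is the genuinely delicate piece. The idea for it is to move to the extension: a trace inequality estimates $\int_{B_{2}}(\eta\theta_{+})^{2}$ and similar quantities on $\{z=0\}$ in terms of norms of $\eta\theta_{+}^{*}$ on $B_{2}^{*}$, and the $BMO$ smallness of $v-\bar v$ — used through the John--Nirenberg inequality or $\mathcal H^{1}$--$BMO$ duality — then gives a bound of the form $\varepsilon\int_{B_{2}^{*}}|\nabla(\eta\theta_{+}^{*})|^{2}+C_{\varepsilon}(C_{v})\bigl(\int_{B_{2}}|\nabla\eta|^{2}\theta_{+}^{2}+\int_{B_{2}^{*}}|\nabla\eta|^{2}(\theta_{+}^{*})^{2}\bigr)$, the first term being absorbed on the left for $\varepsilon$ small. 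Finally, since every manipulation above presupposes enough regularity to integrate by parts in $x$ and $z$ and to take traces on $\{z=0\}$, I would first establish the identity for smooth solutions of a regularized equation and then pass to the limit; the result emerges as an inequality rather than an equality owing to the truncation $\theta\mapsto\theta_{+}$ and to weak lower semicontinuity of the $L^{2}$-norms of the gradient terms.
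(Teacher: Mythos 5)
Your proposal is correct and follows the same harmonic-extension strategy that Caffarelli and Vasseur use, which is also exactly how the present paper derives its Burgers' local energy inequality (Proposition 3.1 in Section 3): test $\triangle\theta^{*}=0$ against $\eta^{2}\theta_{+}^{*}$, peel off the $z=0$ boundary term as $\int_{B_{2}}\eta^{2}\theta_{+}(-\triangle)^{1/2}\theta\,dx$, substitute the equation, rewrite the interior term via $\nabla(\eta^{2}\theta_{+}^{*})\cdot\nabla\theta^{*}=|\nabla(\eta\theta_{+}^{*})|^{2}-|\nabla\eta|^{2}(\theta_{+}^{*})^{2}$ a.e., integrate in time, and estimate the drift term by H\"older in space plus Cauchy-with-$\epsilon$ in time, absorbing $\|\eta\theta_{+}\|^{2}_{L^{2N/(N-1)}(\mathbb{R}^{N})}\leqslant C_{N}\int_{B_{2}^{*}}|\nabla(\eta\theta_{+}^{*})|^{2}$ back into the left. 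Note that the paper cites this Proposition from \cite{Driftdiffusion} rather than reproving it, so the only place your argument goes beyond the paper's own (Burgers') version of the local energy inequality is the $BMO$ handling of $\int_{\sigma}^{t}\int_{B_{2}}\eta\,(v\cdot\nabla\eta)\theta_{+}^{2}$; your split of $v$ into its average on $B_{2}$ and its oscillation, with John--Nirenberg (equivalently $\mathcal{H}^{1}$--$BMO$ duality) controlling the latter, is the standard and correct way to do it, and in the Burgers' case that step trivializes because there $v=\theta\in L^{\infty}$.
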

In order to use the above local energy inequality \eqref{QuasiLEI} freely Caffarelli and Vasseur make the key observation that: \emph{if $\theta$ is a solution of the critical quasi-geostrophic equation, then any other function $u = \beta \{ \theta - L\}$, with arbitrary constants $\beta > 0$ and $L \in \mathbb{R}$, gives another solution of the same quasi-geostrophic equation}.  Such an observation is of crucial importance since this allows the authors to use the above local energy inequality \emph{with the same universal constant $\Phi_{v}$} for any functions in the form of $u = \beta \{ \theta - L   \} $ (that is, not just for the solution $\theta$ itself).  This provides a lot of advantage whenever it is necessary to shift the focus from the solution $\theta$ to some appropriate $u = \beta \{ \theta - L   \}$.  
Unfortunately, in the case of the critical Burgers' equation such a key observation is no longer valid.  This is the main obstacle (and actually the only one) we are facing in borrowing the parabolic De-Giorgi's method from \cite{Driftdiffusion}.  However, we can overcome this difficulty by making the following important observation: \emph{after the local energy inequality \eqref{QuasiLEI} was established in \cite{Driftdiffusion}, the authors actually relied only on the local energy inequality \eqref{QuasiLEI}, rather then the critical quasi-geostrophic equation itself}.\newline\indent
Because of this observation, when we are dealing with a solution $\theta$ of the critical Burgers' equation, we are motivated to focus on the more general function $u = \beta \{ \theta - L   \}$, with constants $|\beta | >0$ and $L\in \mathbb{R}$, and we try to obtain the corresponding local energy inequality satisfied by  $u = \beta \{ \theta - L   \}$.  Indeed, we will find that: if $\theta$ solves the $N$-dimensional critical Burger's equation, then, $u = \beta \{ \theta - L   \} $ will satisfy the following local energy inequality
\begin{equation}\label{CompareLEI}
\begin{split}
&\frac{1}{2}\int_{\sigma}^{t} \int_{B_{2}^{*}} |\nabla (\eta u_{+}^{*})|^{2} dx dz ds
+ \frac{1}{2} \int_{B_{2}} (\eta u_{+})^{2}(t,x) dx\\
&\quad \leqslant \frac{1}{2} \int_{B_{2}} (\eta u_{+})^{2}(\sigma ,x) dx
 + 2N C_{N} [|L| + \|\theta \|_{L^{\infty}([-4,0]\times \mathbb{R}^{3})} ]^{2}
\int_{\sigma}^{t}\int_{B_{2}}|\nabla \eta |^{2} u_{+}^{2}\\
&\quad\quad+ \int_{\sigma}^{t}\int_{B_{2}^{*}} |\nabla \eta |^{2} (u_{+}^{*})^{2} dx dz dt,
\end{split}
\end{equation}
where $u^{*}$ is the harmonic extension of $u$, $u_{+}^{*} = u^{*} \chi_{\{  u^{*} > 0 \}}$, $u_{+} = u \chi_{\{  u > 0 \}}$, and $\eta$ is some cut off function supported in $B_{2}^{*}$.\newline\indent
Now, let us compare inequalities \eqref{QuasiLEI} and \eqref{CompareLEI}.  In the case of the critical Burgers' equation, the constant $ 2N C_{N} [|L| + \|\theta \|_{L^{\infty}([-4,0]\times \mathbb{R}^{3})} ]^{2}$ plays the same role as the universal constant $\Phi_{v}$ appearing in \eqref{QuasiLEI}. However, the universal constant $\Phi_{v}$ in \eqref{QuasiLEI} \emph{remains unchanged} while we replace the solution $\theta$ by $\beta \{ \theta - L   \}$.  In contrast, inequality \eqref{CompareLEI} does not enjoy this stability property, since the quantity $ 2N C_{N} [|L| + \|\theta \|_{L^{\infty}([-4,0]\times \mathbb{R}^{3})} ]^{2} $ might become large compared with $ \|\theta \|_{L^{\infty}([-4,0]\times \mathbb{R}^{3})} $ when the shifting-level $L$ is changing. Because of this, we have to make sure that the constant $ 2N C_{N} [|L| + \|\theta \|_{L^{\infty}([-4,0]\times \mathbb{R}^{3})} ]^{2} $ is under control by a certain integer multiple of $ \lVert\theta \rVert_{L^{\infty}([-4,0]\times \mathbb{R}^{3})} $ at any time we need to employ \eqref{CompareLEI} in our paper.\newline\indent
In fact, once we succeed in applying inequality \eqref{CompareLEI} to $\beta \{\theta - L\}$ the main obstacle we are facing disappears and the parabolic De Giorgi's method as developed in \cite{Driftdiffusion} leads to the proof of Theorem 1.\newline\indent
The set up of the paper is as follows.  In section \ref{weaksolutions} we show existence of the $L^\infty$ bounded weak solution.  Section \ref{leisection} is devoted to the proof of the local energy inequality.  In section \ref{mainproof} we establish some fundamental lemmas, which when combined together with Theorem 2 (see below) result in the proof of Theorem 1.  In section \ref{higher} we discuss how to extend the H\"older continuity to higher regularity.
\begin{acknowledgment}
Both authors are extremely grateful to Professor Alexis Vasseur for suggesting the problem and for his guidance and support throughout the process.  Without him this work would not have come into existence.
\end{acknowledgment}


\section{Existence of $L^{\infty}$-bounded weak solutions.}\label{weaksolutions}
To prove the existence of H\"older continuous solutions for the $N$-dimensional critical Burgers' equation \eqref{Burger} it is necessary for us to establish the existence of $L^{\infty}$-bounded solutions first.  To that end, we provide a proof for the following theorem
\begin{thm}\label{weaksolutiontheorem}
For any given initial datum $\theta_{0} \in L^{2}(\mathbb{R}^{N})$, there exists a weak solution $\theta \in L^{\infty}(0,\infty ; L^{2}(\mathbb{R}^{N})) \cap L^{2} (0,\infty ; \dot H^{\frac{1}{2}}(\mathbb{R}^{N}))$ of the critical Burgers' equation \eqref{Burger} which satisfies the following two properties
\begin{itemize}
\item $\theta (0,\cdot ) = \theta_{0} $ in the $L^{2}(\mathbb{R}^{N})$-sense.
\item For every $t>0$, we have $ \|\theta (t,\cdot )\|_{L^{\infty}(\mathbb{R}^{N})} \leqslant \frac{C_{N}}{t^{\frac{N}{2}}} \|\theta_{0}\|_{L^{2}(\mathbb{R}^{N})} $, where $C_{N}$ is some universal constant depending only on $N$. 
\end{itemize}
\end{thm}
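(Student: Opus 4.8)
The plan is to construct $\theta$ by vanishing viscosity: establish both bullet points at the regularized level with constants independent of the viscosity parameter, and then pass to the limit. First I would regularize. Fix $\varepsilon>0$, mollify the datum so that $\theta_0^\varepsilon=\psi_\varepsilon*\theta_0$ is smooth with $\|\theta_0^\varepsilon\|_{L^2(\R^N)}\le\|\theta_0\|_{L^2(\R^N)}$ and $\theta_0^\varepsilon\to\theta_0$ in $L^2$, and consider the viscous equation
\[
\partial_t\theta^\varepsilon+\sum_{j=1}^N\theta^\varepsilon\,\partial_j\theta^\varepsilon=-(-\triangle)^{1/2}\theta^\varepsilon+\varepsilon\,\triangle\theta^\varepsilon,\qquad \theta^\varepsilon(0,\cdot)=\theta_0^\varepsilon .
\]
The $\varepsilon\triangle$ term makes this semilinear parabolic, so a contraction mapping argument in $C([0,T];H^s(\R^N))$ with $s>\tfrac N2+1$ gives a unique smooth solution on a short interval; since the $L^2$ energy is nonincreasing, the $L^\infty$ norm is nonincreasing (at a spatial maximum the convection term vanishes, $\varepsilon\triangle\theta^\varepsilon\le0$, and $(-\triangle)^{1/2}\theta^\varepsilon\ge0$), and the higher Sobolev norms obey Gronwall-type bounds, the solution is global and smooth.

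Next come the uniform estimates. Testing against $\theta^\varepsilon$ and using $\int_{\R^N}\sum_j(\theta^\varepsilon)^2\partial_j\theta^\varepsilon=\tfrac13\int_{\R^N}\sum_j\partial_j\bigl((\theta^\varepsilon)^3\bigr)=0$ gives the energy identity
\[
\|\theta^\varepsilon(t)\|_{L^2}^2+2\int_0^t\|(-\triangle)^{1/4}\theta^\varepsilon(s)\|_{L^2}^2\,ds+2\varepsilon\int_0^t\|\nabla\theta^\varepsilon(s)\|_{L^2}^2\,ds=\|\theta_0^\varepsilon\|_{L^2}^2\le\|\theta_0\|_{L^2}^2 ,
\]
so $\theta^\varepsilon$ is bounded in $L^\infty(0,\infty;L^2(\R^N))\cap L^2(0,\infty;\dot H^{1/2}(\R^N))$ and $\sqrt\varepsilon\,\nabla\theta^\varepsilon$ in $L^2((0,\infty)\times\R^N)$, uniformly in $\varepsilon$. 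The crucial point is the uniform smoothing bound $\|\theta^\varepsilon(t)\|_{L^\infty}\le C_N t^{-N/2}\|\theta_0\|_{L^2}$, which I would obtain exactly as the first De Giorgi lemma of Caffarelli and Vasseur \cite{Driftdiffusion}: working with the truncations $(\theta^\varepsilon-C_k)_+$ at an increasing sequence of levels $C_k$ and of times, one derives a local-in-level energy inequality and from it a nonlinear recursion forcing $\theta^\varepsilon$ below a level of size $\sim\|\theta_0\|_{L^2}$ at time $\sim1$; the parabolic scaling $\theta(t,x)\mapsto\theta(\lambda t,\lambda x)$ then produces the $t^{-N/2}$ rate. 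The only checks needed are that the viscous contribution $-\varepsilon\int_{\R^N}(\theta^\varepsilon-C_k)_+\triangle\theta^\varepsilon=\varepsilon\int_{\R^N}|\nabla(\theta^\varepsilon-C_k)_+|^2\ge0$ helps rather than hurts, and that the Burgers nonlinearity contributes nothing in this global (no spatial cut-off) estimate: on $\{\theta^\varepsilon>C_k\}$ one has $\sum_j\theta^\varepsilon\partial_j\theta^\varepsilon\cdot(\theta^\varepsilon-C_k)=\sum_j\partial_j\bigl(\tfrac13(\theta^\varepsilon-C_k)^3+\tfrac{C_k}{2}(\theta^\varepsilon-C_k)^2\bigr)$, which integrates to $0$ over $\R^N$. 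Applying the same argument to $-\theta^\varepsilon$, which solves the Burgers equation with the convection term sign-reversed so that the same cancellation occurs, controls the negative part; hence both bullet points hold for $\theta^\varepsilon$ with $\varepsilon$-independent constants.

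Then I would pass to the limit. From the equation, $(-\triangle)^{1/2}\theta^\varepsilon$ is bounded in $L^2(0,T;\dot H^{-1/2})$, $\varepsilon\triangle\theta^\varepsilon=\sqrt\varepsilon\,\Div(\sqrt\varepsilon\,\nabla\theta^\varepsilon)\to0$ in $L^2(0,T;H^{-1})$, and $\tfrac12\sum_j\partial_j\bigl((\theta^\varepsilon)^2\bigr)$ is bounded in $L^r(0,T;W^{-1,m})$ for some $r>1$, $m>1$ (interpolate the $L^\infty_tL^2$ and $L^2_t\dot H^{1/2}$ bounds and use Sobolev embedding), so $\partial_t\theta^\varepsilon$ is bounded in $L^p(0,T;H^{-s}_{\mathrm{loc}})$ for some $p>1$ and $s$ large. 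Localizing in space with cut-offs and using $\dot H^{1/2}_{\mathrm{loc}}\hookrightarrow\hookrightarrow L^2_{\mathrm{loc}}$, the Aubin--Lions--Simon lemma gives a subsequence with $\theta^\varepsilon\to\theta$ strongly in $L^2_{\mathrm{loc}}([0,\infty)\times\R^N)$ and weakly-$*$ in $L^\infty(0,\infty;L^2)\cap L^2(0,\infty;\dot H^{1/2})$; strong local convergence makes $(\theta^\varepsilon)^2\to\theta^2$ in $L^1_{\mathrm{loc}}$, so $\theta$ satisfies the weak formulation of \eqref{Burger}, and weak lower semicontinuity transfers the energy inequality and the $L^\infty$ bound, giving the second and third bullets. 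For the first, $\theta$ is weakly continuous with values in $L^2$ and $\theta(t)\rightharpoonup\theta_0$ as $t\to0^+$ (since $\theta^\varepsilon(0)=\theta_0^\varepsilon\to\theta_0$ in $L^2$); combining $\limsup_{t\to0}\|\theta(t)\|_{L^2}\le\|\theta_0\|_{L^2}$ from the energy inequality with $\liminf_{t\to0}\|\theta(t)\|_{L^2}\ge\|\theta_0\|_{L^2}$ from weak convergence yields $\theta(t)\to\theta_0$ strongly in $L^2$.

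I expect the one genuinely delicate step to be the $\varepsilon$-uniform $L^\infty$ smoothing bound, i.e.\ carrying out the De Giorgi iteration of \cite{Driftdiffusion}; everything else is a routine, if careful, viscosity-approximation argument. It is worth noting, though, that in this first lemma the Burgers nonlinearity drops out entirely, so the adaptation from the quasi-geostrophic setting of \cite{Driftdiffusion} is essentially mechanical --- the real new difficulty of the paper, flagged already in the introduction, appears only later, in the local energy inequality \eqref{CompareLEI} and the ensuing oscillation lemma.
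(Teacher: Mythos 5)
Your proof is correct and hinges, exactly as the paper's, on the viscosity-independent De Giorgi iteration of Caffarelli--Vasseur; the genuine difference is in how the approximate solutions are built. The paper works with Leray--Hopf weak solutions of a \emph{doubly} regularized equation \eqref{Artificalequation}: besides the $\varepsilon\triangle$ term it replaces the convective flux by $\psi_R(\theta)\partial_j\theta$ with $\psi_R$ a bounded truncation. The truncation is there because the paper's approximate solutions lie only in $L^\infty_t L^2\cap L^2_t H^1$, so justifying the vanishing property \eqref{vanishing} --- which is what lets the Burgers nonlinearity drop out of the De Giorgi recursion --- requires the fairly involved decomposition via $[R-L-\theta_L]_+$ that occupies the first part of the paper's proof. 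You instead mollify the datum and construct classical solutions of the singly regularized viscous equation by contraction, maximum principle and parabolic bootstrap; for such $\theta^\varepsilon$, smooth, bounded and decaying, the identity $\sum_j\int_{\R^N}\theta^\varepsilon(\partial_j\theta^\varepsilon)(\theta^\varepsilon-C_k)_+\,dx=0$ is an immediate integration by parts of the cubic flux you exhibit, so the paper's vanishing lemma becomes trivial. The trade is a cleaner vanishing argument in exchange for invoking classical well-posedness of the viscous approximation, which is standard but not quite free: your remark that ``the higher Sobolev norms obey Gronwall-type bounds'' needs, as usual, control of $\|\nabla\theta^\varepsilon\|_{L^\infty}$ via the parabolic smoothing of $\varepsilon\triangle$ for $t>0$, not the $L^\infty$ bound on $\theta^\varepsilon$ alone. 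Both constructions then deliberately drop the $\varepsilon$-energy term, which is what makes the De Giorgi estimate $\varepsilon$-independent and, crucially, allows the rescaling $\theta(t,x)\mapsto\theta(\lambda t,\lambda x)$ to upgrade the small-data conclusion to the $t^{-N/2}$ rate even though the viscous equation itself is not scale-invariant; the passage to the limit via Aubin--Lions is the same in both treatments.
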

\begin{proof}
We start by considering the following \emph{modified critical Burgers' equation},
\begin{equation}\label{Artificalequation}
\partial_{t}\theta + \sum_{j=1}^{N} \psi_{R}(\theta ) \cdot \partial_{j} \theta = -(-\triangle)^{\frac{1}{2}} \theta  + \epsilon \triangle \theta. 
\end{equation}
In the above, an artificial diffusion term $\epsilon \triangle \theta$ is included, and the nonlinear term $\theta \cdot \partial_{j} \theta$ is now replaced by $\psi_{R}(\theta ) \cdot \partial_{j} \theta$, where $R>1$ is an arbitrarily chosen quantity, and $\psi_{R} : \mathbb{R} \rightarrow \mathbb{R}$ is the continuous piecewise linear function given by 
\begin{equation*}
\psi_{R}(\lambda ) = \lambda\cdot\chi_{\{  -R < \lambda < R    \}} + R\cdot  \chi_{\{  |\lambda | \geqslant R    \}}.
\end{equation*}
Due to the addition of the artificial diffusion term $\epsilon \triangle \theta$, it is not hard to convince ourselves that the existence of (Leray-Hopf) weak solutions for the above modified Burgers' equation \eqref{Artificalequation} can easily be established through an application of the standard Galerkin approximation.  Because of this, for the rest of this proof we freely employ the weak solutions of the modified Burgers' equation \eqref{Artificalequation}.\newline\indent
Now, given an initial datum $\theta_{0} \in L^{2}(\mathbb{R}^{N})$, we consider a weak solution  $\theta$ of \eqref{Artificalequation} in the Leray-Hopf class $L^{\infty}(0,\infty ; L^{2}(\mathbb{R}^{N})) \cap L^{2} (0,\infty ; \dot H^{1}(\mathbb{R}^{N}))$ satisfying $\theta (0,\cdot ) = \theta_{0} $ in the $L^{2}(\mathbb{R}^{N})$-sense.  We will employ the standard De-Giorgi's method to prove that $\theta$ is $L^{\infty }$-bounded over $[t_{0} ,\infty )\times \mathbb{R}^{N}$, for every $t_{0} > 0$.  Before this can be done, it is necessary to show that our solution $\theta$ of \eqref{Artificalequation} verifies the following vanishing property for almost every $t\in (0,\infty )$, and at every truncation level $L > 0$
\begin{equation}\label{vanishing}
\int_{\mathbb{R}^{N}}  \psi_{R}(\theta ) \cdot\partial_{j}\theta\cdot\{\theta - L \}_{+} dx = 0,
\end{equation}
where $\{\theta-L\}_{+} = \{\theta-L\} \chi_{\{\theta > L \}}$.  For the sake of convenience, we write $\theta_{L} = \{\theta - L \}_{+}$.  We then observe
\begin{equation*}
\psi_{R}(\theta ) \cdot \partial_{j} \theta \cdot \theta_{L} 
= \frac{1}{2} \{   \partial_{j} [ \psi_{R}(\theta ) \theta_{L}^{2} ]  -  \theta_{L}^{2} \partial_{j}[ \psi_{R}(\theta ) ]                            \}
= \frac{1}{2} \{   \partial_{j} [ \psi_{R}(\theta ) \theta_{L}^{2} ]  -  \theta_{L}^{2} \chi_{\{ -R < \theta < R    \}} \partial_{j} \theta                           \}.
\end{equation*}
By taking the integral over $\mathbb{R}^{N}$ of the above identity, we yield
\begin{equation*}
\int_{\mathbb{R}^{N}}  \psi_{R}(\theta ) \cdot \partial_{j} \theta \cdot \theta_{L} dx
=-\frac{1}{2} \left\{   \int_{\mathbb{R}^{N}}   \theta_{L}^{2} \chi_{\{ -R < \theta < 0    \}} \partial_{j} \theta dx    +  \int_{\mathbb{R}^{N}} \theta_{L}^{2} \chi_{\{ 0 < \theta < R    \}} \partial_{j} \theta  dx       \right\},
\end{equation*}
so we will succeed in justifying \eqref{vanishing}, if we can show 
\begin{equation*}
\int_{\mathbb{R}^{N}}   \theta_{L}^{2} \chi_{\{ -R < \theta < 0    \}} \partial_{j} \theta dx   = \int_{\mathbb{R}^{N}} \theta_{L}^{2} \chi_{\{ 0 < \theta < R    \}} \partial_{j} \theta  dx = 0 .
\end{equation*}
Start with the second term.  Without the loss of generality\footnote{This is because $ \theta_{L}^{2} \chi_{\{ 0 < \theta < R    \}} \partial_{j} \theta     $ automatically vanishes if $L > R $.} we assume that $0 \leqslant L \leqslant R$.  We note
\begin{equation*}
\theta_{L}^{2} \chi_{\{ 0 < \theta < R    \}} \partial_{j} \theta  = \theta_{L}^{2} \chi_{\{ \theta_{L} < R-L     \}} \partial_{j} \theta_{L}=-\theta_{L}^{2}\partial_j[R-L - \theta_{L} ]_{+}.
\end{equation*}
Then a computation shows
\begin{equation}\label{vanishingtwo}
\begin{split}
\theta_{L}^{2} \chi_{\{ 0 < \theta < R    \}} \partial_{j} \theta
& = - (R-L)^{2} \partial_{j}[R-L - \theta_{L} ]_{+}
  + (R-L) \partial_{j}[(R-L - \theta_{L})_{+}^{2}]\\
&\quad\quad  - \frac{1}{3} \partial_{j} [(R-L - \theta_{L})_{+}^{3}].
\end{split}
\end{equation}
Next
\begin{equation*}
[R-L - \theta_{L} ]_{+} = (R-L) - \{ \theta_{L} \chi_{\{ \theta_{L} < R-L \}}   
+ (R-L) \chi_{\{ \theta_{L} \geqslant R - L   \}}   \}.
\end{equation*}
Observe
\[
\theta_{L} \chi_{\{ \theta_{L} < R-L \}} \in  L^{1}(\mathbb{R}^{N})\quad\mbox{and}\quad (R-L) \chi_{\{ \theta_{L} \geqslant R - L   \}} \in L^{1}(\mathbb{R}^{N}),
\]
and since these functions are also in $L^\infty(\R^N)$, they are in $L^p(\R^N), 1<p<\infty$, so
it follows from \eqref{vanishingtwo} that we must have 
\[
\int_{\mathbb{R}^{N}} \theta_{L}^{2} \chi_{\{ 0 < \theta < R    \}} \partial_{j} \theta  dx = 0.
\] 
In exactly same way, we can also show that
\[
\int_{\mathbb{R}^{N}} \theta_{L}^{2} \chi_{\{ -R < \theta < 0    \}} \partial_{j} \theta  dx = 0. 
\]
Hence the validity of property \eqref{vanishing} is established.\\

We are now ready to apply the De-Giorgi's method to the solution $\theta : (0,\infty )\times \mathbb{R}^{N} \rightarrow \mathbb{R}$ of the modified critical Burger's equation \eqref{Artificalequation}. To begin, let $M > 1$ be an arbitrary large positive number (to be chosen later).  We consider the following sequence of truncations
\begin{equation*}
\theta_{k} = [ \theta - M (1-\frac{1}{2^{k}})]_{+},\quad k\geqslant 0.
\end{equation*}
By multiplying \eqref{Artificalequation} by $\theta_{k}$, and then taking integral over $\mathbb{R}^{N}$, we obtain
\begin{equation}\label{baseforDeGiorgi}
\frac{1}{2} \int_{\mathbb{R}^{N}} \partial_{t} [\theta_{k}^{2}] dx  + 
\epsilon \int_{\mathbb{R}^{N}} |\nabla \theta_{k}|^{2} dx = - \int_{\mathbb{R}^{N}} (-\triangle )^{\frac{1}{2}}\theta \cdot \theta_{k} dx , 
\end{equation}
in which we no longer see the term $ \sum_{j=1}^{N} \int_{\mathbb{R}^{N}}  \psi_{R}(\theta )(\partial_{j} \theta) \theta_{k}  dx$, thanks to the vanishing property \eqref{vanishing}.\\

To manage the term $  - \int_{\mathbb{R}^{N}} (-\triangle )^{\frac{1}{2}}\theta \cdot \theta_{k} dx $, it is necessary to use a recent result of C\'ordoba and C\'ordoba \cite{CorobodaDouble}, which states that for any convex function $\phi : \mathbb{R} \rightarrow \mathbb{R}$, we have
\begin{equation}\label{mCC}
-\phi '(\theta ) \cdot  (-\triangle )^{\frac{1}{2}}\theta \leqslant - (-\triangle )^{\frac{1}{2}}(\phi(\theta ) ) .
\end{equation}
To employ such a result, we consider the convex function
\[
\phi_{k}(\lambda ) = [\lambda - M(1-\frac{1}{2^{k}})]_{+}\quad\mbox{with}\quad \phi_{k}'(\lambda ) = \chi_{\{ \lambda > M( 1 - \frac{1}{2^{k}} )      \}}.
\] 
Then it follows from \eqref{mCC} 
\begin{equation*}
-[ (-\triangle )^{\frac{1}{2}}\theta  ]\cdot \theta_{k} = -\phi_{k}'(\theta )\cdot (-\triangle )^{\frac{1}{2}}\theta \cdot \theta_{k} \leqslant - [ (-\triangle )^{\frac{1}{2}}\theta_{k}  ]\cdot \theta_{k}.
\end{equation*}
We use this in \eqref{baseforDeGiorgi} to get
\begin{equation}\label{baseforDeGiorgitwo}
\frac{1}{2} \int_{\mathbb{R}^{N}} \partial_{t} [\theta_{k}^{2}] dx  + 
\epsilon \int_{\mathbb{R}^{N}} |\nabla \theta_{k}|^{2} dx \leqslant - \int_{\mathbb{R}^{N}} \theta_{k}\cdot (-\triangle )^{\frac{1}{2}}\theta_{k}  dx . 
\end{equation}
\indent Recall we wish to prove $\theta$ is $L^{\infty}$-bounded over $[t_{0}, \infty ] \times \mathbb{R}^{N}$ for every $t_{0} > 0$. Let $t_{0} > 0$ be fixed, and consider the increasing sequence 
\[
T_{k} = t_{0}(1-\frac{1}{2^{k}}), k\geqslant 0, 
\]
which approaches the limiting value $t_{0}$ as $k\rightarrow +\infty$.  Also fix $\sigma$ and $t$ verifying $T_{k-1} \leqslant \sigma \leqslant T_{k} \leqslant t < \infty$.  We then integrate \eqref{baseforDeGiorgitwo} over $ [\sigma , t]$ to obtain
\begin{equation}\label{baseDeGiorgithree}
\frac{1}{2} \int_{\mathbb{R}^{N}} \theta_{k}^{2}(t,\cdot )dx 
+ \int_{\sigma}^{t} \int_{\mathbb{R}^{N}} \theta_{k} \cdot (-\triangle)^{\frac{1}{2}}\theta_{k} dx ds
\leqslant \frac{1}{2}  \int_{\mathbb{R}^{N}} \theta_{k}^{2}(\sigma ,\cdot ) dx ,
\end{equation}
\emph{in which we purposely drop the artificial energy term $\epsilon \int_{\sigma}^{t}\int_{\mathbb{R}^{N}} |\nabla \theta_{k}|^{2} dx ds$, since we should not use it in estimating $\|\theta \|_{L^{\infty}([t_{0},\infty )\times \mathbb{R}^{N})}$}. Next, by taking the average over $\sigma \in [T_{k-1} , T_{k}]$ among the terms in the above inequality and then taking the $\sup$ over $t \in [T_{k} , \infty )$, we have 
\begin{equation*}
\frac{1}{2} \sup_{t \in [T_{k} , \infty )} \int_{\mathbb{R}^{N}} \theta_{k}^{2}(t,\cdot )dx 
+ \int_{T_{k}}^{\infty} \int_{\mathbb{R}^{N}} \theta_{k} \cdot (-\triangle)^{\frac{1}{2}}\theta_{k} dx ds
\leqslant \frac{2^{k}}{2t_{0}}  \int_{T_{k-1}}^{T_{k}} \int_{\mathbb{R}^{N}} \theta_{k}^{2}(\sigma ,\cdot ) dx d\sigma .
\end{equation*}
We now consider the following sequence of quantities
\begin{equation*}
U_{k} =  \sup_{t \in [T_{k} , \infty )} \int_{\mathbb{R}^{N}} \theta_{k}^{2}(t,\cdot )dx 
+ 2 \int_{T_{k}}^{\infty} \int_{\mathbb{R}^{N}} \theta_{k} \cdot (-\triangle)^{\frac{1}{2}}\theta_{k} dx ds.
\end{equation*}
Then, our last inequality tells us that
\begin{equation}\label{baseDeGirogifour}
U_{k} \leqslant  \frac{2^{k}}{t_{0}}  \int_{T_{k-1}}^{\infty} \int_{\mathbb{R}^{N}} \theta_{k}^{2}(\sigma ,\cdot ) dx d\sigma .
\end{equation}
Our goal is to build up a nonlinear recurrence relation for $U_{k}, k\geqslant 0$ by relying on the above inequality. By employing Sobolev embedding, interpolation and H\"older's inequality we know that our solution $\theta$ of \eqref{Artificalequation} satisfies the following inequality for all $k \geqslant 1$
\begin{equation*}
\|\theta_{k-1}\|_{L^{2(1+\frac{1}{N})}(  [T_{k-1} ,\infty )\times \mathbb{R}^{N} )} \leqslant C U_{k-1}^{\frac{1}{2}} ,  
\end{equation*}
for some contant $C$ depending only on $N$. Because of this, we can raise up the index for $\int_{T_{k-1}}^{\infty} \int_{\mathbb{R}^{N}} \theta_{k}^{2}(\sigma ,\cdot ) dx d\sigma  $ as follows
\begin{equation*}
\int_{T_{k-1}}^{\infty} \int_{\mathbb{R}^{N}} \theta_{k}^{2}dx d\sigma
\leqslant \int_{T_{k-1}}^{\infty} \int_{\mathbb{R}^{N}} \theta_{k}^{2} \chi_{\{ \theta_{k-1} > \frac{M}{2^{k}}         \}}
\leqslant \left(\frac{2^{k}}{M}\right)^{\frac{2}{N}} \int_{T_{k-1}}^{\infty} \int_{\mathbb{R}^{N}} \theta_{k}^{2} \theta_{k-1}^{\frac{2}{N}} 
\leqslant \left(\frac{2^{k}}{M}\right)^{\frac{2}{N}} C U_{k-1}^{1+\frac{1}{N}} .
\end{equation*}
Hence \eqref{baseDeGirogifour} together with our last inequality gives
\begin{equation}\label{baseDeGiorgifive}
U_{k} \leqslant \frac{2^{k(1+\frac{2}{N})}}{t_{0} M^{\frac{2}{N}}} C U_{k-1}^{1+\frac{1}{N}} .
\end{equation}
We can now choose $M = (\frac{1}{t_{0}})^{\frac{N}{2}}$ so that $t_{0} M^{\frac{2}{N}} = 1$. Hence
\be\label{baseDeGiorgisix}
U_{k} \leqslant 2^{k(1+\frac{2}{N})} C U_{k-1}^{1+\frac{1}{N}},\quad k \geqslant 1.
\ee
From the nonlinear recurrence relation \eqref{baseDeGiorgisix}, we know that there exists some constant $\delta_{N} \in (0,1)$, depending only on $N$, such that $U_{k} \rightarrow 0$ as $k\rightarrow \infty$, provided we have $U_{1} < \delta_{N}$. Due to this observation, if the initial datum $\theta_{0} = \theta (0,\cdot )$ verifies
\[
\|\theta_{0}\|_{L^{2}(\mathbb{R}^{N})} < 
(\frac{\delta_{N}}{2^{1+\frac{2}{N}}C}   )^{\frac{N}{N+1}},
\]
then, we must have that 
\begin{equation*}
U_{1} \leqslant 2^{1+\frac{2}{N}}C U_{0}^{1+\frac{1}{N}} 
\leqslant 2^{1+\frac{2}{N}}C  \|\theta_{0}\|_{L^{2}(\mathbb{R}^{N})}^{1+\frac{1}{N}} < \delta_{N}.
\end{equation*}
Note we use $U_0\leq \|\theta_0\|^2_{L^2(R^N)}$, which holds because of the energy inequality that can be obtained in a standard way for the Leray-Hopf solutions of \eqref{Artificalequation}. 
For such a $\theta_0$, we have $\lim_{k\rightarrow \infty} U_{k} = 0$, and hence $\theta \leqslant M = (\frac{1}{t_{0}})^{\frac{N}{2}}$ is valid almost everywhere on $[t_{0} , \infty )\times \mathbb{R}^{N}$. By applying the same De-Giorgi's method to $-\theta$, we should also get $-\theta \leqslant M = (\frac{1}{t_{0}})^{\frac{N}{2}}$ almost everywhere on $[t_{0} , \infty )\times \mathbb{R}^{N}$. At this point, let us summarize what we have done so far:
\begin{itemize}
\item If $\theta : (0,\infty )\times \mathbb{R}^{N} \rightarrow \mathbb{R}$ is a weak solution of the modified critical Burgers' equation with initial datum $\theta (0,\cdot ) = \theta_{0} \in L^{2}( \mathbb{R}^{N}  )$  verifying $\|\theta_{0}\|_{L^{2}(\mathbb{R}^{N})} < 
(\frac{\delta_{N}}{2^{1+\frac{2}{N}}C}   )^{\frac{N}{N+1}}$, then it follows that 
$\|\theta \|_{L^{\infty}([t_{0} , \infty )\times \mathbb{R}^{N}  )} \leqslant (\frac{1}{t_{0}})^{\frac{N}{2}}$ for every $t_{0} > 0$.
\end{itemize}
Next, we need to remove the smallness condition imposed on $\|\theta_{0}\|_{L^{2}(\mathbb{R}^{N})}$ in the above statement. To this end, let $\theta : (0,\infty )\times \mathbb{R}^{N} \rightarrow \mathbb{R}$ be a given weak solution of the modified critical Burgers' equation \eqref{Artificalequation}, and let $\lambda > 0$ be the unique positive number such that
\begin{equation}\label{lambdadef}
\frac{1}{\lambda^{\frac{N}{2}}} \|\theta_{0}\|_{L^{2}(\mathbb{R}^{N})}
= \frac{1}{2}  \left(\frac{\delta_{N}}{2^{1+\frac{2}{N}}C}   \right)^{\frac{N}{N+1}}. 
\end{equation}
For such a $\lambda > 0$, we consider the rescaled function $\theta_{\lambda}(t,x) = \theta (\lambda t, \lambda x)$, which solves the following rescaled modified Burgers' equation in the weak sense
\begin{equation*}
 \partial_{t}\theta_{\lambda} + \sum_{j=1}^{N} \psi_{R}(\theta_{\lambda} ) \cdot \partial_{j} \theta_{\lambda} = -(-\triangle)^{\frac{1}{2}} \theta_{\lambda}  + \frac{\epsilon}{\lambda} \triangle \theta_{\lambda}. 
\end{equation*}
At first glance, it seems to be troublesome that $\theta_{\lambda}$ no longer solves the original equation \eqref{Artificalequation}.  However, this is not problematic at all since the energy term 
$\epsilon \int_{\mathbb{R}^{N}} |\nabla \theta_{k}|^{2} dx$ is purposely dropped from inequality \eqref{baseforDeGiorgitwo} before we apply the De-Giorgi's method to $\theta_{k}$.  This means that all the estimates starting from \eqref{baseDeGiorgithree} in the above process are independent of the artificial diffusion term $\epsilon \triangle \theta$. This tells us, in particular that if $\theta_{k}$ is replaced by $[\theta_{\lambda} - M(1 - \frac{1}{2^{k}} ) ]_{+}$ in inequality \eqref{baseDeGiorgithree}, all the estimates thereafter remain unchanged.  This observation, together with the fact that by \eqref{lambdadef}
\[ 
\|\theta_{\lambda} (0,\cdot ) \|_{L^{2}(\mathbb{R}^{N})} < \frac{1}{2} \left(\frac{\delta_{N}}{2^{1+\frac{2}{N}}C}   \right)^{\frac{N}{N+1}},  \]
give us
\begin{equation*}
\|\theta_{\lambda} \|_{L^{\infty}([\frac{t_{0}}{\lambda} , \infty ) \times \mathbb{R}^{N})}
\leqslant \frac{1}{(\frac{t_{0}}{\lambda })^{\frac{N}{2}}}, \quad  t_{0} > 0.
\end{equation*}
Since $ \|\theta_{\lambda} \|_{L^{\infty}([\frac{t_{0}}{\lambda} , \infty ) \times \mathbb{R}^{N})} =
\|\theta \|_{L^{\infty}([t_{0} , \infty ) \times \mathbb{R}^{N})}      $, it follows from \eqref{lambdadef} that the following inequality is valid for every $t_{0} > 0$
\begin{equation*}
\|\theta \|_{L^{\infty}([t_{0} , \infty ) \times \mathbb{R}^{N})}
\leqslant 2 \left( \frac{2^{1+\frac{2}{N}} C }{\delta_{N}}    \right)^{\frac{N}{N+1}} 
\frac{ \|\theta_{0}\|_{L^{2}(\mathbb{R}^{N})}  }{t_{0}^{\frac{N}{2}} } .
\end{equation*}
In summary, we have established 
\begin{itemize}
\item There exists some universal constant $C_{N} \in (0,\infty )$, depending only on $N$, such that for every weak solution $\theta^{(\epsilon , R)} : (0,\infty )\times \mathbb{R}^{N} \rightarrow \mathbb{R}$ of the modified critical Burgers' equation \eqref{Artificalequation} with initial datum $\theta^{(\epsilon ,R)} (0,\cdot) = \theta_{0} \in L^{2}(\mathbb{R}^{N})$, we have 
$\|\theta^{(\epsilon , R)} \|_{L^{\infty}([t_{0} , \infty ) \times \mathbb{R}^{N})}
\leqslant C_{N}\cdot \frac{ \|\theta_{0}\|_{L^{2}(\mathbb{R}^{N})}  }{t_{0}^{\frac{N}{2}} }  $, for every $t_{0} > 0$.
\end{itemize}
Now, the solution $\theta^{(\epsilon , R)}$ of the modified critical Burgers' equation \eqref{Artificalequation} satisfies the uniform bound $C_{N}\frac{ \|\theta_{0}\|_{L^{2}(\mathbb{R}^{N})}  }{t_{0}^{\frac{N}{2}} }$. By passing to the limit, as $\epsilon \rightarrow 0^{+}$ and $R \rightarrow +\infty$, it follows that $ \theta^{(\epsilon , R)}$ converges to some weak solution $\theta : (0,\infty )\times \mathbb{R}^{N} \rightarrow \mathbb{R}$ of the critical Burgers' equation \eqref{Burger}, which must also satisfy the same uniform bound $C_{N}\frac{ \|\theta_{0}\|_{L^{2}(\mathbb{R}^{N})}  }{t_{0}^{\frac{N}{2}} }$. So, we are finished with the proof of Theorem~\ref{weaksolutiontheorem}.
\end{proof}


\section{Harmonic extension to $\mathbb{R}^{N}\times [0,\infty )$ and the local energy inequality}\label{leisection}
We begin by introducing the harmonic extension (See \cite{Driftdiffusion} and \cite{CaffarelliSilvestre} for more details).  Operator $(-\triangle)^\frac{1}{2} \theta$ is not a local operator.  However it can be localized.  Indeed, define the harmonic extension operator $H: C^\infty_0(\R^N)\mapsto C^\infty_0(\R^N\times \R^+)$ by
\begin{align*}
&-\triangle H(\theta)=0\quad\mbox{in}\quad \R^N \times(0,\infty),\\
&H(\theta)(x,0)=\theta(x),\quad x\in \R^N.
\end{align*}
Then it can be shown we can view $(-\triangle)^\frac{1}{2} \theta$ as the normal derivative of $H(\theta)$ on the boundary $\{(x,0): x \in \R^N\}$ i.e.,
\[
(-\triangle)^\frac{1}{2}\theta(x)=-\partial_\nu H(\theta)(x).
\]
From now on we use $\theta^*$ to denote the harmonic extension of $\theta$ or more precisely
\[
\theta^*(t,x,z)=H(\theta(t,\cdot))(x,z).
\]
Now we are ready to proceed to the local energy inequality and its proof, which closely follows \cite{Driftdiffusion}.
\begin{prop}\label{LEI}
(Local Energy Inequality) Let $\theta : [-4,0]\times \mathbb{R}^{N}\rightarrow \mathbb{R}$ be a weak solution of the Burgers' equation \eqref{Burger}. Then, for any function $u$ in the form of $u = \beta [\theta -L]$, with $\beta > 0$, and $L \in \mathbb{R}$ we have 
\begin{equation*}
\begin{split}
&\frac{1}{2}\int_{\sigma}^{t} \int_{B_{4}^{*}} |\nabla (\eta u_{+}^{*})|^{2} dx dz ds
+ \frac{1}{2} \int_{B_{4}} (\eta u_{+})^{2}(t,x) dx\\
&\quad \leqslant \frac{1}{2} \int_{B_{4}} (\eta u_{+})^{2}(\sigma ,x) dx 
 + 2N C_{N} \left(|L| + \|\theta\|_{L^{\infty}([-4,0]\times \mathbb{R}^{3})} \right)^{2}
\int_{\sigma}^{t}\int_{B_{4}}|\nabla \eta |^{2} u_{+}^{2}dxds\\
&\quad\quad+ \int_{\sigma}^{t}\int_{B_{4}^{*}} |\nabla \eta |^{2} (u_{+}^{*})^{2} dx dz dt,
\end{split}
\end{equation*}
where $\eta$ can be any cut off function supported in $B_{4}^{*}=B_4\times [0,4], B_4=[-4,4]^N$, and $C_{N}$ is the constant appearing in the Sobolev inequality $\|f\|_{L^{\frac{2N}{N-1}}(\mathbb{R}^{N})}^{2} \leqslant C_{N} \|f\|_{\dot H^{\frac{1}{2}}(\mathbb{R}^{N})}^{2}$.
\end{prop}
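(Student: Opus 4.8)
The plan is to mimic the Caffarelli–Vasseur derivation of the local energy inequality for quasi-geostrophic flow, replacing the divergence-free drift $v$ by the Burgers' nonlinearity $\sum_j \theta\,\partial_j\theta$, and tracking carefully the error term that arises precisely because $u=\beta[\theta-L]$ is \emph{not} a solution of the Burgers' equation. First I would note that $u$ satisfies
\[
\partial_t u + \beta\,\theta\sum_{j=1}^N \partial_j u = -(-\triangle)^{1/2} u,
\]
i.e.\ $u$ solves a linear transport-diffusion equation with drift $b = \beta\theta\,(1,\dots,1)$, where $b$ is \emph{bounded} on $[-4,0]\times\mathbb{R}^N$ (with $\|b\|_\infty \le \beta\sqrt{N}\,\|\theta\|_{L^\infty}$) but is \emph{not} divergence-free: $\Div b = \beta\theta\sum_j\partial_j\theta = \beta\sum_j\theta\partial_j\theta$. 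This is the structural difference with \cite{Driftdiffusion}, and it is what produces the factor $|L|+\|\theta\|_{L^\infty}$ rather than a clean BMO seminorm. I would work with the harmonic extension $u^*$ on $B_4^* = B_4\times[0,4]$, using $(-\triangle)^{1/2}u = -\partial_z u^*|_{z=0}$, and test the extended equation against $\eta^2 u_+^*$ in the standard way: multiply $-\triangle u^* = 0$ by $\eta^2 u_+^*$ and integrate over $B_4^*$, producing $\int_{B_4^*}|\nabla(\eta u_+^*)|^2 \le \int_{B_4^*}|\nabla\eta|^2(u_+^*)^2 + \text{(boundary term at }z=0)$, where the boundary term is $\int_{B_4}\eta^2 u_+\,\partial_z u^*|_{z=0} = -\int_{B_4}\eta^2 u_+\,(-\triangle)^{1/2}u$.

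Next I would handle the evolution part. Multiplying the equation for $u$ by $\eta^2 u_+$ and integrating over $B_4$ over the time slab $[\sigma,t]$ gives, using $u_+\partial_t u = \tfrac12\partial_t u_+^2$,
\[
\tfrac12\int_{B_4}(\eta u_+)^2(t) - \tfrac12\int_{B_4}(\eta u_+)^2(\sigma)
= -\int_\sigma^t\!\!\int_{B_4}\eta^2 u_+\,(-\triangle)^{1/2}u
  - \beta\int_\sigma^t\!\!\int_{B_4}\eta^2 u_+\,\theta\textstyle\sum_j\partial_j u,
\]
so that combining with the extension identity the good terms $\tfrac12\int|\nabla(\eta u_+^*)|^2$ and $\tfrac12\int(\eta u_+)^2(t)$ appear on the left (up to using $u_+(-\triangle)^{1/2}u \ge (-\triangle)^{1/2}(\tfrac12 u_+^2)$-type bounds via \eqref{mCC} to absorb the diffusion boundary contribution, exactly as in \cite{Driftdiffusion}), and the only genuinely new term to estimate is the nonlinear one
\[
I := -\beta\int_\sigma^t\!\!\int_{B_4}\eta^2 u_+\,\theta\sum_{j=1}^N\partial_j u\,dx\,ds.
\]
The key computation is to rewrite $u_+\partial_j u = \partial_j(\tfrac12 u_+^2)$ on $\{u>0\}$, integrate by parts in $x_j$ (no boundary since $\eta$ is compactly supported in $B_4$), and move the derivative onto $\eta^2\theta$:
\[
I = \beta\sum_{j=1}^N \int_\sigma^t\!\!\int_{B_4} \tfrac12 u_+^2\,\partial_j(\eta^2\theta)
 = \beta\sum_{j=1}^N\int_\sigma^t\!\!\int_{B_4}\Big(\eta\,\partial_j\eta\,\theta\,u_+^2 + \tfrac12\eta^2 u_+^2\,\partial_j\theta\Big).
\]
The second piece, containing $\partial_j\theta$, is the one that would have vanished in the divergence-free case; here I would reabsorb it by writing $\partial_j\theta = \partial_j(\theta - L)$ and noting $u_+^2\partial_j(\theta-L) = \beta^{-1} u_+^2\partial_j u = \beta^{-1}\partial_j(\tfrac13 u_+^3)$, integrate by parts once more to land on $\partial_j(\eta^2)$, giving a term of the form $\beta\cdot\beta^{-1}\int \eta\partial_j\eta\, u_+^3$. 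The cubic term $\int\eta|\nabla\eta|u_+^3$ is then controlled by $\|u_+\|_{L^\infty}$ times $\int\eta|\nabla\eta|u_+^2$; but I do not want an $L^\infty$ bound on $u_+$, so instead I would bound $u_+ = \beta(\theta-L)_+ \le \beta(\|\theta\|_{L^\infty}+|L|)$ pointwise \emph{only in the cubic factor}, turning the cubic term into $\beta(\|\theta\|_\infty+|L|)\int\eta|\nabla\eta|u_+^2$; similarly the first piece $\beta\int\eta|\nabla\eta|\,\theta\,u_+^2$ is bounded by $\beta(\|\theta\|_\infty+|L|)\int\eta|\nabla\eta|u_+^2$ after writing $\theta = (\theta-L)+L$ and $|\theta|\le \|\theta\|_\infty$. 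Collecting, $|I| \le C\,N\,\beta(\|\theta\|_\infty+|L|)\int_\sigma^t\!\!\int_{B_4}\eta|\nabla\eta|u_+^2$, and one more application of Young's inequality, $\beta(\|\theta\|_\infty+|L|)\eta|\nabla\eta|u_+^2 \le \tfrac12|\nabla(\eta u_+)|^2_{(\text{tangential part})} + C\beta^2(\|\theta\|_\infty+|L|)^2|\nabla\eta|^2 u_+^2$ — wait, more carefully, since $u = \beta(\theta-L)$ one has $\beta(\|\theta\|_\infty+|L|)$ multiplied against $u_+^2$ which itself is $\beta^2(\theta-L)_+^2$, so actually after using $u_+\le\beta(\|\theta\|_\infty+|L|)$ in exactly one factor the homogeneity works out to reproduce precisely the constant $2NC_N(|L|+\|\theta\|_{L^\infty})^2$ multiplying $\int|\nabla\eta|^2 u_+^2$, as claimed.

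I expect the main obstacle to be the bookkeeping of the nonlinear term $I$: specifically, making the two integrations by parts rigorous (the functions $u_+$, $u_+^2$, $u_+^3$ must have enough regularity and decay for the boundary terms to vanish and for the manipulations $u_+\partial_j u = \partial_j(\tfrac12 u_+^2)$ to hold in the sense of distributions — this is where one needs $\theta\in L^\infty_t\dot H^{1/2}_x$ and the $L^\infty$ bound from Theorem~\ref{weaksolutiontheorem}), and then choosing \emph{which single factor} of $u_+$ to estimate in $L^\infty$ so that the final constant comes out homogeneous of the right degree and matches $2NC_N(|L|+\|\theta\|_{L^\infty})^2$ rather than something that degenerates as $\beta\to 0$ or $\beta\to\infty$. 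The diffusion term $-\int\eta^2 u_+(-\triangle)^{1/2}u$ and its splitting into the Sobolev/$\dot H^{1/2}$ good term plus a lower-order remainder, together with the harmonic-extension identity, is essentially verbatim from \cite{Driftdiffusion} (this is where the constant $C_N$ from $\|f\|_{L^{2N/(N-1)}}^2\le C_N\|f\|_{\dot H^{1/2}}^2$ enters), so I would cite that part rather than reprove it, and concentrate the write-up on the nonlinear term and on verifying that the cut-off localization produces exactly the stated inequality on $B_4^*$.
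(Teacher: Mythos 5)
Your overall strategy---test the extended equation with $\eta^2 u_+^*$, extract a boundary term, rewrite $(-\triangle)^{1/2}u$ via the harmonic extension, and handle the transport term by moving derivatives onto the cut-off with two integrations by parts---is indeed the paper's route, and your decomposition of the nonlinear term into a piece with $\theta$ and a piece with $\partial_j\theta$ (the latter then converted to a cubic $\int\eta\,\partial_j\eta\,u_+^3$) is algebraically equivalent to the paper's identity
\[
3\int_{B_4}\eta^2u_+\tfrac{1}{\beta}(u+L\beta)\partial_j u\,dx
=-\int_{B_4}\partial_j(\eta^2)\,u_+^2\,\tfrac{1}{\beta}(u+L\beta)\,dx
+L\int_{B_4}\eta^2u_+\partial_j u\,dx.
\]
There are, however, two genuine problems.

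First, the transported equation for $u$ is off by a factor of $\beta$. Since $\theta=\tfrac{1}{\beta}(u+L\beta)$, multiplying the Burgers' equation by $\beta$ gives
\[
\partial_t u + \sum_{j}\theta\,\partial_j u = -(-\triangle)^{1/2}u,
\]
i.e.\ drift $\theta$ itself, not $\beta\theta$: the $\beta$ from $\beta\,\partial_j\theta=\partial_j u$ is used up in producing $\partial_j u$, and $\beta L$ is killed by $(-\triangle)^{1/2}$. (Also $\mathrm{div}\,b$ for $b=\theta(1,\dots,1)$ is $\sum_j\partial_j\theta$, not $\sum_j\theta\partial_j\theta$.) With the erroneous extra $\beta$, your term $\beta\int\eta\partial_j\eta\,\theta\,u_+^2$ retains a stray $\beta$ that has no counterpart in the stated constant, and your closing parenthetical about homogeneity is an attempt to explain away an inconsistency that would not be there with the correct drift.

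Second, and more substantially, the final absorption step is not valid. After reducing everything to a term of the form
\[
\Big|\sum_j\int_\sigma^t\!\!\int_{B_4}\eta\,\partial_j\eta\,u_+^2\,v\,dx\,ds\Big|,\qquad v\in\{\theta,\,L\},
\]
you invoke Young's inequality to produce ``$\tfrac12|\nabla(\eta u_+)|^2_{\text{tangential part}}$.'' But the left-hand side of the Proposition controls $\int|\nabla(\eta u_+^*)|^2$, the Dirichlet energy of the \emph{harmonic extension}, which dominates only the $\dot H^{1/2}$ seminorm of the trace $\eta u_+$, not its tangential $\dot H^1$ norm. A naive Young's step $\eta|\nabla\eta|u_+^2\le\tfrac12(\eta u_+)^2+\tfrac12|\nabla\eta|^2u_+^2$ produces $\int(\eta u_+)^2$ integrated in time, which is not on the left-hand side either (only the fixed-time slices at $t$ and $\sigma$ appear). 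The paper's route is: H\"older in $x$ with the dual pair $L^{\frac{2N}{N-1}}$--$L^{\frac{2N}{N+1}}$ on the grouping $(\eta u_+)\cdot(\partial_j\eta\,u_+\,v)$, then Young (``Cauchy with $\epsilon$'') in time giving
\[
\frac{1}{\epsilon}\int_\sigma^t\|\eta u_+\|_{L^{\frac{2N}{N-1}}}^2\,ds
+\epsilon\int_\sigma^t\|\nabla\eta\,v\,u_+\|_{L^{\frac{2N}{N+1}}}^2\,ds,
\]
absorbing the first term into the left via the Sobolev inequality $\|\eta u_+\|_{L^{\frac{2N}{N-1}}}^2\le C_N\int_{B_4^*}|\nabla(\eta u_+^*)|^2$ and estimating the second by $C\epsilon\,\|v\|_{L^\infty}^2\int|\nabla\eta|^2u_+^2$ using $\tfrac{2N}{N+1}\le 2$ and the compact support of $\eta$. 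This is the only place the Sobolev constant $C_N$ enters, and Young's squaring of the factor $v\in\{\theta,L\}$ is precisely what produces $(|L|+\|\theta\|_{L^\infty})^2$ rather than a first power. Your bound $|I|\lesssim N(|L|+\|\theta\|_{L^\infty})\int\eta|\nabla\eta|u_+^2$ (once the $\beta$ is fixed) is fine as an intermediate inequality, but it needs the H\"older--Young--Sobolev machinery to close, not a pointwise bound on $u_+$; on its own it cannot yield the squared constant in the Proposition.
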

\begin{proof}
Start with
\begin{align}
0&=\int_{B^*_4}\eta^2u^*_+\triangle u^\ast dx dz\nonumber\\
&=-\int_{B^*_4}\nabla(\eta^2u^*_+)\cdot\nabla u^\ast dx dz+\int_{B_4}\eta^2u^*_+\partial_zu^*dx\Big\rvert^4_0\nonumber\\
&=-\int_{B^*_4}\nabla(\eta^2u^*_+)\cdot\nabla u^\ast dx dz+\int_{B_4}\eta^2(x,0)u_+(-\triangle)^\frac{1}{2} u dx\label{m3},
\end{align}
where we use
\[
(-\triangle)^\frac{1}{2} u =-\partial_z u^*(\cdot,0).
\]
A calculation shows that \eqref{m3} is equivalent to 
\be
0=-\int_{B^*_4}|\nabla(\eta u^*_+)|^2dxdz+\int_{B^*_4}|\nabla \eta|^2[u^*]_+^2 dx dz+\int_{B_4}\eta^2(x,0)u_+(-\triangle)^\frac{1}{2} u dx\label{m4}.
\ee
Now if $\theta$ solves \eqref{Burger}, $u$ solves
\be
\partial_t u + \sum^N_{j=1}\frac{1}{\beta}(u+L\beta)\partial_j u=-(-\triangle)^\frac{1}{2} u.\label{m1}
\ee
Also observe
\[
3\int_{B_4}\eta^2u_+\frac{1}{\beta}(u+L\beta)\partial_j udx=-\int_{B_4}\partial_j(\eta^2)[u_+]^2\frac{1}{\beta}(u+L\beta)dx
+L\int_{B_4} \eta^2 u_+\partial_judx.
\]
Hence for the third term on the RHS in \eqref{m4} we have
\be\label{m2}
\begin{split}
&-\int_{B_4} \eta^2u_+(-\triangle)^\frac{1}{2} u
=\partial_t\left(\int_{B_4}\eta^2\frac {u_+^2}{2}dx\right)-\frac{1}{3} \sum^N_{j=1}\int_{B_4}\partial_j(\eta^2)u_+^2\frac{1}{\beta}(u+L\beta)dx\\
&\quad\quad+\frac{L}{3}\sum^N_{j=1}\int_{B_4}\eta^2 u_+\partial_judx.
\end{split}
\ee
Substitute \eqref{m2} into \eqref{m4}, integrate between $\sigma$ and $t$, and take the absolute value of the RHS to obtain
\be\label{m5}
\begin{split}
&\int^t_\sigma\int_{B^*_4}|\nabla(\eta u^*_+)|^2dxdzds
+\int_{B_4}\eta^2\frac {u_+^2(t)}{2}dx\\
&\quad\leq\int^t_\sigma\int_{B^*_4}|\nabla \eta|^2[u^*]_+^2 dx dzds+\int_{B_4}\eta^2\frac {u_+^2(\sigma)}{2}dx\\
&\quad\quad+\frac{1}{3} \Big\lvert\sum^N_{j=1}\int_{B_4}\partial_j(\eta^2)u_+^2\frac{1}{\beta}(u+L\beta)dx\Big\rvert
+\Big\lvert\frac{L}{3}\sum^N_{j=1}\int_{B_4}\eta^2 u_+\partial_judx\Big\rvert.
\end{split}
\ee
We examine the last two terms.  Both can be written as a constant multiple of 
\[
\Big\lvert\sum^N_{j=1}\int^t_\sigma\int_{B_4}\eta\partial_j\eta u_+^2vdxds\Big\rvert,
\]
where $v=\frac{1}{\beta}(u+L\beta)=\theta$ for the first term and $v=L$ for the second.  Following \cite{Driftdiffusion} by H\"older's inequality in space and Cauchy's inequality with $\epsilon$ in time we obtain
\begin{align*}
\Big\lvert\sum^N_{j=1}\int^t_\sigma\int_{B_4}\eta\partial_j\eta u_+^2vdxds\Big\rvert
\leq \frac{1}{\epsilon}\int^t_\sigma \|\eta u_+\|^2_{L^\frac{2N}{N-1}}ds+\epsilon\int^t_\sigma \|\nabla\eta v u_+\|^2_{L^\frac{2N}{N+1}}ds.
\end{align*}
By the arguments on top of p.8 in \cite{Driftdiffusion}
\[
\frac{1}{\epsilon}\int^t_\sigma\|\eta u_+\|^2_{L^\frac{2N}{N-1}(\R^N)}ds\leq \frac{1}{\epsilon}C_N \int^t_\sigma\int_{B^*_4}|\nabla (\eta u^*_+)|^2 dx dzds,
\]
which means it can be combined with the LHS of \eqref{m5} if $\epsilon$ is small enough.  Next,
since $\frac{2N}{N+1}\leq 2$ and $\eta$ has compact support within $B^*_4$ we have
\begin{align*}
\epsilon\int^t_\sigma \|\nabla\eta vu_+\|^2_{L^\frac{2N}{N+1}}ds&\leq C\epsilon \int^t_\sigma\int_{B_4} |\nabla\eta vu_+|^2dxds\\
&\leq C\epsilon \|v\|^2_{L^\infty}\int^t_\sigma\int_{B_4} |\nabla\eta u_+|^2dxds\\
&\leq C\epsilon (|L|+\|\theta\|_{L^\infty})^2\int^t_\sigma\int_{B_4} |\nabla\eta u_+|^2dxds
\end{align*}
as needed.
\end{proof}

\section{Proof of Theorem 1}\label{mainproof}
Theorem 2 proven in section \ref{weaksolutions} gives us the first part of Theorem 1.  What remains is to establish the H\"older continuity for solutions of equation \eqref{Burger}.  For this purpose, we need the following three lemmas.  In what follows, we use the abbreviations that $Q_{r}^{*} = [-r,0]\times B_{r}^{*}$ and $Q_{r} = [-r,0]\times B_{r}$.
\begin{lem}\label{Lemma1}
Given any $C_{\theta} \in (0, \infty )$, there exists some $\epsilon_{0} > 0$ (depending only on $N$ and $C_{\theta}$), and some $\lambda \in (0, \frac{1}{2})$(depending only on $N$), such that for every solution $\theta : [-\frac{1}{4} ,0]\times \mathbb{R}^{N}\rightarrow \mathbb{R}$ of equation \eqref{Burger} which verifies $\|\theta \|_{L^{\infty}([-\frac{1}{4},0]\times \mathbb{R}^{N})} \leqslant C_{\theta}$, we have the following implication for every function $u$ in the form of $u = \beta [\theta -L]$, with $|\beta | \geqslant min\{1, \frac{1}{C_{\theta }}\}$, and $|L| \leqslant 6 C_{\theta}$\\
\begin{itemize}
\item If $u^{*} = \beta [\theta^{*} - L]$ verifies $u^{*}\leqslant 2$ on $[-\frac{1}{4},0]\times B_{\frac{1}{4}}^{*}$, and 
\[
\int_{-\frac{1}{4}}^{0}\int_{B_{\frac{1}{4}}^{*}} (u_{+}^{*})^{2} + \int_{-\frac{1}{4}}^{0}\int_{B_{\frac{1}{4}}} u_{+}^{2} \leqslant \epsilon_{0}, 
\]
then it follows that  $u \leqslant 2-\lambda$ on $[-\frac{1}{16},0]\times B_{\frac{1}{16}}$.
\end{itemize}
\end{lem}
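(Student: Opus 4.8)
The plan is to run the first De-Giorgi iteration exactly as in \cite{Driftdiffusion}, using the local energy inequality (Proposition \ref{LEI}) in place of the quasi-geostrophic one, and checking along the way that all the hypotheses of Proposition \ref{LEI} are met with a \emph{bounded} constant. First I would fix truncation levels $u_k = [u^\ast - C_k]_+$ on the $z$-extended side and $[u - C_k]_+$ on the trace, where $C_k = 2-\lambda - \lambda 2^{-k} \cdot$(something), increasing from $2-2\lambda$ up to $2-\lambda$, together with a nested sequence of shrinking parabolic cylinders $Q^\ast_{r_k}$ interpolating between $Q^\ast_{1/4}$ and $Q^\ast_{1/16}$, and cut-off functions $\eta_k$ adapted to this nesting, with $|\nabla \eta_k| \lesssim 2^k$. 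The key point is that $u = \beta[\theta - L]$ is \emph{again} of the form $\beta'[\theta - L']$ for each further truncation level only up to constants, but since the lemma already grants $|\beta| \geq \min\{1, 1/C_\theta\}$ and $|L| \leq 6C_\theta$, the constant $2NC_N(|L| + \|\theta\|_{L^\infty})^2$ appearing in Proposition \ref{LEI} is bounded by $2NC_N(6C_\theta + C_\theta)^2 = 98 N C_N C_\theta^2$, i.e.\ by a quantity depending only on $N$ and $C_\theta$. This is precisely the ``constant under control'' issue flagged in the introduction, and here it is handled for free by the a priori bounds imposed in the statement.

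Next I would form the energy quantities
\[
A_k = \sup_{t \in [-r_k,0]} \int_{B_{r_k}} u_k^2(t,x)\,dx + \int_{-r_k}^0 \int_{B^\ast_{r_k}} |\nabla u_k^\ast|^2\,dx\,dz\,ds,
\]
and derive from Proposition \ref{LEI} (applied with $\eta = \eta_k$, $\sigma$ averaged over the appropriate time slab, $t$ ranging over $[-r_k,0]$) an estimate of the form $A_k \lesssim 2^{2k}\, (1 + NC_NC_\theta^2)\, B_{k-1}$, where $B_{k-1} = \int\!\!\int_{Q^\ast_{r_{k-1}}} (u^\ast_{k-1})^2 + \int\!\!\int_{Q_{r_{k-1}}} u_{k-1}^2$ is the ``level set energy'' at the previous step. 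Then, by the Sobolev inequality $\|f\|^2_{L^{2N/(N-1)}} \leq C_N\|f\|^2_{\dot H^{1/2}}$ combined with the trace estimate controlling $\|\eta u^\ast_+\|$ in the extended domain (the ``arguments on top of p.\ 8'' of \cite{Driftdiffusion} already invoked in the proof of Proposition \ref{LEI}), plus interpolation and H\"older on the thin region $\{u_{k-1} > C_{k-1} - C_{k-2}\}$ where $C_{k-1} - C_{k-2} \gtrsim \lambda 2^{-k}$, I would upgrade $B_k \lesssim (2^k/\lambda)^{2/(N+1)} A_k^{1 + 1/(N+1)}$ or similar. Chaining these two gives a nonlinear recurrence $B_k \leq C^k K B_{k-1}^{1+\mu}$ with $\mu > 0$ and $K = K(N, C_\theta, \lambda)$, and the standard fact that such a recurrence forces $B_k \to 0$ provided $B_0 \leq \epsilon_0$ for $\epsilon_0 = \epsilon_0(N, C_\theta, \lambda)$ small enough. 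Since $B_0 \leq \int\!\!\int_{Q^\ast_{1/4}}(u^\ast_+)^2 + \int\!\!\int_{Q_{1/4}} u_+^2 \leq \epsilon_0$ is exactly the hypothesis, we conclude $B_k \to 0$, hence $[u - (2-\lambda)]_+ = 0$ a.e.\ on $Q_{1/16}$, which is the claimed bound $u \leq 2 - \lambda$ on $[-\tfrac{1}{16},0]\times B_{1/16}$.

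The main obstacle — and the only genuinely new point relative to \cite{Driftdiffusion} — is making sure the constant in the local energy inequality stays bounded uniformly along the truncation sequence. In \cite{Driftdiffusion} the replacement $\theta \rightsquigarrow \beta[\theta - L]$ leaves the coefficient $\Phi_v$ literally unchanged; here it does not, because the coefficient depends on $L$ through $(|L| + \|\theta\|_{L^\infty})^2$. The resolution is structural: the successive De-Giorgi truncations $u_k$ of $u = \beta[\theta-L]$ are \emph{not} re-written as $\beta_k[\theta - L_k]$ with a new shift $L_k$; rather, Proposition \ref{LEI} is applied \emph{once}, to the fixed function $u$ (equivalently, to the fixed pair $(\beta, L)$), and the truncation is carried out purely at the level of the cut-off $\eta$ and the truncation constant $C_k$ inside the already-established inequality — which is legitimate precisely because, as noted in the introduction, after \eqref{QuasiLEI}/Proposition \ref{LEI} is in hand the argument uses only that inequality and never the equation again. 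With $|\beta| \geq \min\{1,1/C_\theta\}$ and $|L| \leq 6C_\theta$ fixed by hypothesis, the coefficient $2NC_N(|L|+\|\theta\|_{L^\infty})^2 \leq 98 N C_N C_\theta^2$ is a harmless constant, and the rest of the iteration is the verbatim De-Giorgi machinery of \cite{Driftdiffusion}, so $\lambda$ can be chosen depending on $N$ alone and $\epsilon_0$ on $N$ and $C_\theta$.
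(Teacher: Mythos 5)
There is a genuine gap, and it concerns precisely the point the introduction flags as the delicate one. You claim that Proposition~\ref{LEI} is applied \emph{once}, ``to the fixed function $u$,'' and that the truncations $C_k$ are then carried out ``inside the already-established inequality.'' That cannot work: the local energy inequality controls $u_+$ and $u_+^*$, not $(u-C_k)_+$ and $(u^*-C_k)_+$, so the Caccioppoli estimate for the $k$-th truncation is not a corollary of the $k=0$ instance. What the paper actually does is re-apply Proposition~\ref{LEI} at every level $k$ to the shifted function $u-C_k=\beta[\theta-(L+C_k/\beta)]$, and the whole point of the hypothesis $|\beta|\geq 1/C_\theta$, $|L|\leq 6C_\theta$ is to guarantee that \emph{these new shifts} satisfy $|L+C_k/\beta|\leq 6C_\theta+2C_\theta=8C_\theta$, so the constant in the LEI is uniformly $\Phi=2NC_N(8C_\theta+C_\theta)^2$, not your $2NC_N(7C_\theta)^2$. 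Your bound $98NC_NC_\theta^2$ is incorrect because it omits the contribution of the truncation shift $C_k/\beta$ (which is of size up to $2C_\theta$), and --- more seriously --- your stated reason for why the constant stays bounded (``LEI applied once'') is not the actual mechanism. This is exactly the structural difference from \cite{Driftdiffusion} that the paper is at pains to handle.

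A second, larger omission: your iteration scheme (``nested parabolic cylinders $Q^*_{r_k}$ interpolating between $Q^*_{1/4}$ and $Q^*_{1/16}$'') is not the scheme the paper, following Lemma 6 of \cite{Driftdiffusion}, actually runs. The paper uses cutoffs $\eta_k$ with essentially fixed spatial support $B(1+2^{-k})$ but a separately and \emph{exponentially} shrinking $z$-interval $[0,\delta^k]$, together with the propagated vanishing condition $\eta_k u_k^*=0$ on $z\in[\delta^k,2]$ (condition \eqref{3}) and the Poisson-kernel comparison $\eta_{k+1}u_{k+1}^*\leq[(\eta_k u_k)*P(z)]\eta_{k+1}$ (condition \eqref{4}). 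These are not incidental: condition \eqref{3} is what lets one invoke the energy-minimization property of harmonic extension to pass from $\int|\nabla(\eta_k u_k^*)|^2$ to $\|\eta_k u_k\|_{\dot H^{1/2}}$ (and hence to the Sobolev gain of integrability), and condition \eqref{4} is what lets one pass from $\eta_{k+1}u_{k+1}^*$ at a positive $z$-level back to the trace $\eta_k u_k$; both are maintained by barrier-function arguments ($b_1$, $b_2$) and an auxiliary decay condition \eqref{6} on $A_k$ that together close the loop only after a careful choice of $\delta$, $M$, and $\epsilon_0$. A direct De-Giorgi iteration on geometrically nested cylinders $Q^*_{r_k}$, with no such vanishing-in-$z$ structure, does not give a closed recurrence: the $L^\infty$ constraint $u^*\leq 2$ alone does not control the tail $z\in(r_k,2]$, and the $\dot H^{1/2}$ step has no foothold. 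Deferring to ``the verbatim De-Giorgi machinery of \cite{Driftdiffusion}'' does not patch this, because your explicit sketch (the $A_k$ you write down, the cylinders you nest) actively diverges from that machinery; the divergence is exactly where the proof breaks.
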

\begin{lem}\label{Lemma2}
Given any $C_{\theta} \in (0,\infty )$, and any sufficiently small $\epsilon_{1} > 0$, there exists some $\delta_{1} > 0$, and also some constant $D_{\theta} \in (0,\infty )$ ( depending only on $C_{\theta}$ and $N$), such that for every solution $\theta : [-1,0]\times \mathbb{R}^{N} \rightarrow \mathbb{R}$ of equation \eqref{Burger} which verifies $\|\theta \|_{L^{\infty}([-1,0]\times\mathbb{R}^{N})} \leqslant C_{\theta}$, we have the following implication for all function $u$ in the form of $u=\beta [\theta - L]$, with $|\beta | \geqslant  \frac{1}{C_{\theta}} $, and $|L| \leqslant 6 C_{\theta}$\\
\begin{itemize}
\item If $u^{*} = \beta [\theta^{*} - L]$ verifies the following three conditions:
\begin{itemize}
\item [i)] $u^{*} \leqslant 2$ on $Q_{1}^{*} = [-1,0]\times B_{1}^{*}$,
\item [ii)]$|\{  (t,x,z)\in Q_{1}^{*} : u^{*}(t,x,z) \leqslant 0  \} | \geqslant \frac{|Q_{1}^{*}|}{2} $,
\item [iii)]$| \{(t,x,z) \in Q_{1}^{*} :  0 < u^{*}(t,x,z) < 1 \} | \leqslant \delta_{1}$ , 
\end{itemize}
then it follows that 
\be\label{mlemm1}
\int_{Q_{\frac{1}{4}}} [u-1]_{+}^{2} dx dt  + \int_{Q_{\frac{1}{4}}^{*}} [u^{*}-1]_{+}^{2} dx dz dt \leqslant D_{\theta } \epsilon_{1}^{\frac{1}{2}}.
\ee
\end{itemize}
\end{lem}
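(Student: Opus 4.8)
The plan is to derive both estimates in \eqref{mlemm1} from the local energy inequality of Proposition \ref{LEI} combined with the classical De Giorgi isoperimetric inequality, following Section~5 of \cite{Driftdiffusion}. The first point is the \emph{shift trick}: for any level $\ell$ with $\tfrac{1}{2}\le \ell\le 1$ one has $[u-\ell]_+ = \bigl(\beta[\theta-\widehat L]\bigr)_+$ and $[u^*-\ell]_+=\bigl(\beta[\theta^*-\widehat L]\bigr)_+$ with $\widehat L:=L+\ell/\beta$; since $|\beta|\ge 1/C_\theta$ and $|L|\le 6C_\theta$ we get $|\widehat L|\le 7C_\theta$, hence $|\widehat L|+\|\theta\|_{L^\infty}\le 8C_\theta$. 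Thus (after the harmless critical rescaling that converts the cylinder $B_4$ of Proposition \ref{LEI} into $B_1$) the local energy inequality applies to every such truncation with one and the same constant $D_\theta$ depending only on $C_\theta$ and $N$. Using hypothesis (i), namely $u^*\le 2$ on $Q_1^*$ — so $[u^*-\ell]_+\le 2$ and, on the slice $z=0$, $[u-\ell]_+\le 2$ — the entire right-hand side of the energy inequality is bounded by a constant depending only on $C_\theta$ and $N$; choosing cut-offs $\eta_k$ equal to $1$ on $Q_{r_k}^*$ and supported in $Q_{r_{k-1}}^*$ with $r_k\downarrow\tfrac14$, this gives for each $k$
\[
\int_{Q_{r_k}^*}\bigl|\nabla[u^*-\ell_k]_+\bigr|^2\,dx\,dz\,dt+\sup_{t}\int_{B_{r_k}}[u-\ell_k]_+^2(t,x)\,dx\;\le\; C_\theta .
\]

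Next I would run a De Giorgi iteration on an increasing sequence of levels $\ell_k=1-2^{-(k+1)}\uparrow 1$ (so $\ell_0=\tfrac12$) and the shrinking cylinders $Q_{r_k}^*$, $r_k=\tfrac14(1+2^{-k})$. At the $k$-th step the three sets $A_k=\{u^*\le \ell_{k-1}\}\cap Q_{r_k}^*$, $D_k=\{\ell_{k-1}<u^*<\ell_k\}\cap Q_{r_k}^*$ and $B_k=\{u^*\ge\ell_k\}\cap Q_{r_k}^*$ feed the De Giorgi isoperimetric inequality: hypothesis (ii) controls $|A_k|$ from below (it contains $\{u^*\le 0\}$, whose measure in $Q_1^*$ is at least $|Q_1^*|/2$), hypothesis (iii) gives $|D_k|\le|\{0<u^*<1\}\cap Q_1^*|\le\delta_1$, and the gradient bound above controls $\int_{D_k}|\nabla u^*|^2$. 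The isoperimetric inequality then converts "thin transition set $+$ bounded gradient $+$ fat sublevel set" into $|B_k|\le C_\theta\,\delta_1^{1/2}$ on the appropriate cylinder, and feeding this back through the energy inequality (the nonlinear Burgers term being absorbed exactly as in \cite{Driftdiffusion}, thanks to $|\widehat L|+\|\theta\|_{L^\infty}\le 8C_\theta$) propagates the smallness down to $k=\infty$, yielding $|\{u^*\ge 1\}\cap Q_{1/4}^*|\le C_\theta\,\delta_1^{\gamma}$ for a fixed $\gamma>0$. Since $0\le[u^*-1]_+\le 1$ on $Q_1^*$ by (i), this already gives $\int_{Q_{1/4}^*}[u^*-1]_+^2\le C_\theta\,\delta_1^{\gamma}$. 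For the trace term I would use the part of the local energy inequality controlling $\sup_t\int(\eta[u-1]_+)^2(t,x)\,dx$ together with the trace Sobolev inequality $\|f\|_{L^{2N/(N-1)}(\R^N)}^2\le C_N\|f\|_{\dot H^{1/2}(\R^N)}^2$ from Proposition \ref{LEI} (as on p.~8 of \cite{Driftdiffusion}), which bounds a parabolic $L^{2+2/N}$ norm of $[u-1]_+$ on $Q_{1/4}$ by a power of $\int_{Q_1}[u-1]_+^2+\int_{Q_1^*}[u^*-1]_+^2\le C_\theta\,\delta_1^{\gamma}$; since $[u-1]_+\le 1$ this forces $\int_{Q_{1/4}}[u-1]_+^2\le C_\theta\,\delta_1^{\gamma}$ as well. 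Finally, given $\epsilon_1$, choose $\delta_1=\delta_1(\epsilon_1,C_\theta,N)$ so small (concretely $\delta_1\asymp\epsilon_1^{1/\gamma}$) that $C_\theta\,\delta_1^{\gamma}\le D_\theta\,\epsilon_1^{1/2}$, which is \eqref{mlemm1}.

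The step I expect to be the main obstacle is the isoperimetric iteration: one must carefully manage the measure mismatch between the cylinder $Q_1^*$ on which hypothesis (ii) is assumed and the smaller cylinders on which the gradient bound is available so that the recursion closes, and — this is the issue singled out in the introduction via the comparison of \eqref{CompareLEI} with the quasi-geostrophic inequality — one must keep the constant in front of the nonlinear term controlled by $C_\theta$ and $N$ throughout, which is exactly what the bound $|\widehat L|\le 7C_\theta$ guarantees. Once the superlevel sets $\{u^*\ge 1\}$ and $\{u\ge 1\}$ have been shown to be thin, the rest is routine.
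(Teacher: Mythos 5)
Your shift trick for applying Proposition \ref{LEI} at the shifted level is correct and matches what the paper does (the paper writes $u - C_k = \beta[\theta - L - C_k/\beta]$ with the same type of bound on $|L + C_k/\beta|$), and deriving the gradient bound $\int_{-4}^{0}\int_{B_1^*}|\nabla u_+^*|^2 \leqslant C_1$ from it is also how the paper begins. But from that point onward your argument departs from the paper's and has a genuine gap.

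The central difficulty in Lemma \ref{Lemma2} is that the De Giorgi isoperimetric inequality (as stated in the paper) is a \emph{fixed-time-slice} statement on the cube $B_1^*$: it needs, for each $t$, a lower bound on $|\mathcal A(t)| = |\{(x,z)\in B_1^*: u^*(t,x,z)\leqslant 0\}|$ to convert a thin transition set into a thin superlevel set at that time $t$. Hypothesis (ii) only provides a lower bound on the \emph{space-time} measure of $\{u^*\leqslant 0\}$ in $Q_1^*$; it does not prevent $|\mathcal A(t)|$ from being tiny (or zero) at many individual times $t$. Your proposal invokes hypothesis (ii) as though it directly controls $|A_k|$ from below, but this step fails without further work. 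The paper's resolution, and the real content of its proof, is a time-propagation argument: it splits $[-4,0] = I \cup I^c$ where $I$ is the set of ``good'' times (small transition set and bounded gradient), shows $|I^c|$ is small by Chebyshev once $\delta_1$ is chosen small, finds by averaging a single $t_0\in I$ with $|\mathcal A(t_0)|\geqslant 1/4$, and then \emph{uses the local energy inequality a second time} to show that $\int_{B_1} u_+^2(t)\,dx$ stays small for $t$ in a fixed window $[t_0, t_0+\delta^*]$, which forces $|\mathcal A(t)|\geqslant 1/4$ for all good $t$ in that window. Stepping forward in $t$ closes the argument. Nothing in your proposal supplies this mechanism.

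Relatedly, the level iteration $\ell_k\uparrow 1$ and the shrinking cylinders $Q_{r_k}^*$ are not part of the paper's proof of Lemma \ref{Lemma2} and do not buy you anything here: if the isoperimetric inequality could be applied at all, it would already give smallness of $|\{u^*\geqslant 1\}|$ at a single level, so there is nothing for the iteration to propagate. (You appear to be importing the structure of Lemma \ref{Lemma1}, which \emph{does} iterate over levels, into a lemma whose logic is different.) You do flag the ``measure mismatch'' as the obstacle, which is the right instinct, but the issue is not really the mismatch between $Q_1^*$ and $Q_{1/4}^*$ — the paper makes no use of shrinking spatial cylinders — it is the mismatch between a space-time hypothesis and a per-time-slice application, which can only be resolved by the LEI-driven time-propagation step. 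Finally, for the trace term $\int_{Q_{1/4}}[u-1]_+^2$, the paper simply writes $u_+(t,x)=u_+^*(t,x,z)-\int_0^z\partial_z u_+^*\,d\bar z$, averages in $z$ over $[0,\sqrt{\epsilon_1}]$, and uses Cauchy--Schwarz together with the gradient bound; your proposed route through the trace Sobolev inequality is plausible but unnecessarily heavier.
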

\begin{lem}\label{Oscillation}
(Oscillation Lemma) Given any $C_{\theta } \in (0,\infty )$, there exists some $\lambda^{*} > 0 $ (depending only on $N$ and $C_{\theta }$), such that for every solution $\theta : [-1,0] \times \mathbb{R}^{N} \rightarrow \mathbb{R}$ of equation \eqref{Burger}, which verifies $\|\theta \|_{L^{\infty}([-1,0]\times \mathbb{R}^{N})} \leqslant C_{\theta }$, we have the following implication for any function $u$ in the form of $u = \beta [\theta - L] $, with $|\beta | \geqslant \frac{1}{C_{\theta } } $, and $|L| \leqslant 3 C_{\theta }$\\
\begin{itemize}
\item If it happens that $u^{*} \leqslant 2$ on $Q_{1}^{*}$, and 
$| \{ (t,x,z) \in Q_{1}^{*} : u^{*} \leqslant 0  \}  | \geqslant \frac{|Q_1^*|}{2}$, then it follows that 
$u^{*} \leqslant 2-\lambda^{*}$ on $Q_{\frac{1}{32}}^{*}$. 
\end{itemize}
\end{lem}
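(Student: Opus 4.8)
The plan is to run the classical De Giorgi ``diminish of oscillation'' argument: feed Lemma~\ref{Lemma1} and Lemma~\ref{Lemma2} into an iteration over the truncation levels, and then convert the resulting flat estimate into an estimate on the extended cylinder by a short maximum principle argument. Given $u=\beta[\theta-L]$ as in the statement, I would build the truncation ladder $u_{0}=u$, $u_{k}=2(u_{k-1}-1)$, so that $u_{k}=2^{k}(u-2)+2=\beta_{k}[\theta-L_{k}]$ with $\beta_{k}=2^{k}\beta$ and $L_{k}=L+(2-2^{1-k})\beta^{-1}$. Since $|\beta|\ge 1/C_{\theta}$ one gets $|\beta_{k}|\ge|\beta|\ge 1/C_{\theta}$ and $|L_{k}|\le|L|+2|\beta|^{-1}\le 3C_{\theta}+2C_{\theta}=5C_{\theta}<6C_{\theta}$; this is precisely why the hypothesis here demands $|L|\le 3C_{\theta}$ rather than $6C_{\theta}$, namely so that every $u_{k}$ is an admissible input for both Lemma~\ref{Lemma1} and Lemma~\ref{Lemma2}. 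Because the harmonic extension is affine in $\theta$ we have $u_{k}^{*}=2^{k}(u^{*}-2)+2$, so the hypotheses $u^{*}\le 2$ on $Q_{1}^{*}$ and $|\{u^{*}\le 0\}\cap Q_{1}^{*}|\ge |Q_{1}^{*}|/2$ pass verbatim to each $u_{k}$; and, crucially, the intermediate layers $\{0<u_{k}^{*}<1\}=\{2-2^{1-k}<u^{*}<2-2^{-k}\}$ are pairwise disjoint subsets of $Q_{1}^{*}$.

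Then I would fix the constants in a non-circular order: take $\epsilon_{0}=\epsilon_{0}(N,C_{\theta})$ from Lemma~\ref{Lemma1}; choose $\epsilon_{1}>0$ so small that $4D_{\theta}\epsilon_{1}^{1/2}\le\epsilon_{0}$, where $D_{\theta}=D_{\theta}(N,C_{\theta})$ is the constant of Lemma~\ref{Lemma2} (which does not depend on $\epsilon_{1}$); let $\delta_{1}=\delta_{1}(N,C_{\theta},\epsilon_{1})$ be the threshold of Lemma~\ref{Lemma2}; and pick an integer $K$ with $K\delta_{1}>|Q_{1}^{*}|$. If condition (iii) of Lemma~\ref{Lemma2} were violated for every $k\in\{0,\dots,K-1\}$, the disjointness of the layers would force $|Q_{1}^{*}|\ge\sum_{k=0}^{K-1}|\{0<u_{k}^{*}<1\}\cap Q_{1}^{*}|>K\delta_{1}>|Q_{1}^{*}|$, absurd; hence there is $k_{0}<K$ with $|\{0<u_{k_{0}}^{*}<1\}\cap Q_{1}^{*}|\le\delta_{1}$. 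Applying Lemma~\ref{Lemma2} to $u_{k_{0}}$ yields $\int_{Q_{1/4}}[u_{k_{0}}-1]_{+}^{2}+\int_{Q_{1/4}^{*}}[u_{k_{0}}^{*}-1]_{+}^{2}\le D_{\theta}\epsilon_{1}^{1/2}$; since $(u_{k_{0}+1})_{+}=2[u_{k_{0}}-1]_{+}$ and $(u_{k_{0}+1}^{*})_{+}=2[u_{k_{0}}^{*}-1]_{+}$, this reads $\int_{Q_{1/4}}(u_{k_{0}+1})_{+}^{2}+\int_{Q_{1/4}^{*}}(u_{k_{0}+1}^{*})_{+}^{2}\le 4D_{\theta}\epsilon_{1}^{1/2}\le\epsilon_{0}$. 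Together with $u_{k_{0}+1}^{*}\le 2$ on $[-\tfrac14,0]\times B_{1/4}^{*}$, this is exactly the hypothesis of Lemma~\ref{Lemma1} applied (with the same $\theta$) to $u_{k_{0}+1}=\beta_{k_{0}+1}[\theta-L_{k_{0}+1}]$, and it produces $u_{k_{0}+1}\le 2-\lambda$ on $[-\tfrac1{16},0]\times B_{1/16}$. Unwinding the ladder, $u\le 2-\lambda 2^{-(k_{0}+1)}\le 2-\mu$ on $[-\tfrac1{16},0]\times B_{1/16}$, with $\mu:=\lambda 2^{-K}$ depending only on $N$ and $C_{\theta}$.

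It remains to upgrade this flat bound to the extended cylinder. Fixing $t\in[-\tfrac1{32},0]$ and putting $v:=2-u^{*}(t,\cdot,\cdot)$, the function $v$ is harmonic in $B_{1}\times(0,1)$, is nonnegative there (because $u^{*}\le 2$ on $Q_{1}^{*}$), and satisfies $v\ge\mu$ on $B_{1/16}\times\{0\}$. On the subdomain $B_{1/2}\times(0,1/2)$ I would compare $v$ with $\mu\widetilde h$, where $\widetilde h$ is the bounded harmonic function on that subdomain whose boundary datum is the indicator of $B_{1/16}\times\{0\}$; the maximum principle for the harmonic extension gives $v\ge\mu\widetilde h$ there. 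Since $\widetilde h$ is a fixed positive harmonic function, it satisfies $\widetilde h\ge c_{N}>0$ on the compact set $\overline{B_{1/32}}\times[0,1/32]$, so $u^{*}\le 2-c_{N}\mu$ on that set; letting $t$ vary, $u^{*}\le 2-\lambda^{*}$ on $Q_{1/32}^{*}$ with $\lambda^{*}:=c_{N}\mu=c_{N}\lambda 2^{-K}$, depending only on $N$ and $C_{\theta}$, as claimed.

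The ladder bookkeeping (verifying the admissibility bounds on $\beta_{k},L_{k}$ and the affine identities) is routine, and the pigeonhole argument is the standard De Giorgi trick. The step I expect to require the most care is the last one: one cannot control $u^{*}$ in the interior from its Poisson representation, since $u=\beta[\theta-L]$ can be large away from $B_{1}$ when $\beta$ is large, so the barrier argument must be driven by the \emph{sign} of $2-u^{*}$ on $Q_{1}^{*}$ together with the flat bound on $B_{1/16}\times\{0\}$, rather than by any size estimate on $u$. One should also double-check that $D_{\theta}$ in Lemma~\ref{Lemma2} is independent of $\epsilon_{1}$, so the cascade $\epsilon_{0}\rightsquigarrow\epsilon_{1}\rightsquigarrow\delta_{1}\rightsquigarrow K$ closes without circularity.
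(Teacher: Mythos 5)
Your proposal is correct and follows essentially the same route as the paper: the same truncation ladder $u_{k}=2(u_{k-1}-1)$, the same admissibility checks $|\beta_{k}|\ge 1/C_{\theta}$, $|L_{k}|\le 6C_{\theta}$, the same pigeonhole on the layers to invoke Lemma~\ref{Lemma2} and then Lemma~\ref{Lemma1}, and the same maximum-principle barrier step to propagate the flat bound on $B_{1/16}\times\{0\}$ into $B_{1/32}^{*}$. The only cosmetic differences are that you fix $K$ large enough to force the ``good'' step $k_{0}$ to exist (whereas the paper keeps the alternative case and observes that there the conclusion $u^{*}\le 2-2/2^{K_{+}}$ follows directly), and you place the comparison barrier on $B_{1/2}\times(0,1/2)$ with indicator data rather than on $B_{1/16}^{*}$; neither change affects the substance.
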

\begin{rem}  The above lemmas correspond to Lemma 6, Lemma 8 and Proposition 9 in \cite{Driftdiffusion} respectively.  However, here they are not stated for the solution $\theta$ of the equation, but for the function $u=\beta[\theta-L]$ since this is the function that we actually apply the lemmas to.  Most of all, the above lemmas require restrictions for the constants $\beta$ and $L$, which were not needed in \cite{Driftdiffusion}.  This is a result of the main difficulties of dealing with the Burgers' equation explained in the introduction.
\end{rem}
\begin{rem}
The proof of Lemma~\ref{Lemma1} and Lemma~\ref{Lemma2} relies on the local energy inequality as established in Proposition~\ref{LEI}.  The two lemmas are technical tools needed to establish the Oscillation Lemma and are proven at the end of this section. 
\end{rem}
\begin{rem}
The Oscillation Lemma gives us the H\"older continuity.
We describe this next, and then give the proof of the Oscillation Lemma.  It is very important to observe that the universal constant $\lambda^{*}$ in the Oscillation Lemma is invariant under the natural scaling $\theta_{\lambda}(t,x) = \theta (\lambda t , \lambda x)$ for solutions of the N-dimensional critical Burgers' equation.  This observation is of crucial importance since it allows us to employ the Oscillation Lemma at different scales in the proof of H\"older continuity (see below).  The scale-invariant property of the Oscillation Lemma is due to the invariance of solutions for the $N$-dimensional critical Burgers' equation under above scaling.  This in particular explains why our method works in the critical case.
\end{rem}
\noindent
\textbf{Proof of H\"older Continuity:}  Set $r=\frac{1}{32}$.  Let $\theta : [-1,0]\times \mathbb{R}^{N} \rightarrow \mathbb{R}$ be a solution of \eqref{Burger} with  $\|\theta \|_{L^{\infty}([-1,0]\times \mathbb{R}^{N})} \leqslant C_{\theta}$ for some $C_{\theta } \in (0,\infty ).$  In order to use Lemma~\ref{Oscillation}, it is necessary to consider the function $u_{0} = \beta_{0} \{\theta - L_{0}\}$, where the constants $\beta_{0}$ and $L_{0}$ are given by
\[
\beta_{0} = \frac{2}{\sup_{Q_{1}^{*}}\theta^{*} - \inf_{Q_{1}^{*}} \theta^{*}}\quad\mbox{and}\quad 
L_{0} = \frac{\sup_{Q_{1}^{*}}\theta^{*} + \inf_{Q_{1}^{*}} \theta^{*}  }{2}.
\]
Note $\beta_{0} \geqslant \frac{1}{C_{\theta }}$, and $|L_{0}|\leqslant C_{\theta}$. Now, we are going to construct a sequence of functions $u_{k} = \beta_{k} \{\theta - L_{k}\}$ inductively in a way that is dependent on $u_{0}$.\\
\indent To begin the inductive process, we observe that $u_{0} = \beta_{0} \{\theta - L_{0}\}$ verifies the condition that $|u_{0}^{*}| \leqslant 2$ on $Q_{1}^{*}$. To construct a suitable $u_{1} = \beta_{1} \{ \theta -L_{1} \}$ from $u_{0}$, we split our discussion into two cases:\newline\noindent
\textbf{{Case 1:  $| \{ (t,x,z) \in Q_{1}^{*} : u_{0}^{*} \leqslant 0  \}  | \geqslant \frac{|Q_{1}^{*}|}{2}$}}.  We apply Lemma~\ref{Oscillation} to $u_{0}^{*}$ over $Q_{1}^{*}$ and deduce that $-2 \leqslant u_{0}^{*} \leqslant 2-\lambda^{*}$ on $Q_{r}^{*}$, where $r=\frac{1}{32}$ and $\lambda^{*}$ is the constant in Lemma~\ref{Oscillation}.  Hence, we have
\[
\Big\lvert\frac{2}{2-\frac{\lambda^{*}}{2}} \{u_{0}^{*} + \frac{\lambda^{*}}{2}  \}\Big\rvert \leqslant 2\quad\mbox{on}\quad Q_{r}^{*}.
\]
Let $a =\frac{2}{2-\frac{\lambda^{*}}{2}} $, and define $u_{1}$ to be
\[
u_{1} = a \{u_{0} + \frac{\lambda^{*}}{2}   \} 
= a\beta_{0} \{ \theta - L_{0} + \frac{\lambda^{*}}{2\beta_{0}}   \}.
\]
\newline\noindent
\textbf{{Case 2: $| \{ (t,x,z) \in Q_{1}^{*} : -u_{0}^{*} \leqslant 0  \}  | \geqslant \frac{|Q_{1}^{*}|}{2}$}}.  In this case, we apply Lemma~\ref{Oscillation} to $-u_{0}^{*}$ over $Q_{1}^{*}$ and deduce that $-2 \leqslant -u_{0}^{*} \leqslant 2-\lambda^{*}$ on $Q_{r}^{*}$. Hence, we have
\[
\Big\lvert\frac{2}{2-\frac{\lambda^{*}}{2}} \{u_{0}^{*} - \frac{\lambda^{*}}{2}  \}\Big\rvert \leqslant 2 \quad\mbox{on}\quad Q_{r}^{*}.
\]
As before, we write $a =\frac{2}{2-\frac{\lambda^{*}}{2}} $, and define in this case that 
\[
u_{1} = a \{u_{0} - \frac{\lambda^{*}}{2}   \}= a\beta_{0} \{ \theta - L_{0} - \frac{\lambda^{*}}{2\beta_{0}}   \}.
\]
\newline
We observe that in either case
\begin{itemize} 
\item $u_{1} = a\beta_{0} \{ \theta - L_{0} + (-1)^{\sigma_{1}}\frac{\lambda^{*}}{2\beta_{0}}   \},\quad \sigma_{1} \in \{0,1\}.$
\item $|u_{1}^{*}| \leqslant 2$ on $Q_{r}^{*}$ .
\item $ |a\beta_{0}| \geqslant \frac{1}{C_{\theta}}$, and 
$ |L_{0} -(-1)^{\sigma_{1}}\frac{\lambda^{*}}{2\beta_{0}}| \leqslant C_{\theta} + \frac{\lambda^{*}}{2\beta_{0}} \leqslant \frac{3}{2} C_{\theta}           $ .
\end{itemize}
This means that we can apply Lemma~\ref{Oscillation} to $u_{1}^{*}$ over $Q_{r}^{*}$ in order to construct $ u_{2} = a \{u_{1} + (-1)^{\sigma_{2}} \frac{\lambda^{*}}{2}   \}$ in exactly the same way.\\
\indent For the reasons of transparency and completeness we descibe now the inductive step.  Suppose that at step $k\in \mathbb{N}^{+}$, we have a function $u_{k}$ given by
\begin{equation*}
u_{k} = a^{k} \beta_{0} \{ \theta - L_{0} + \frac{\lambda^{*}}{2\beta_{0}} \sum_{s=1}^{k} (-1)^{\sigma_{s}} 
(\frac{1}{a})^{s-1}                          \} ,
\end{equation*}
which verifies the required condition that
\[
|u_{k}^{*}| \leqslant 2\quad\mbox{on}\quad Q_{r^{k}}^{*}.
\]
Here, let us make the crucial observation that 
\begin{itemize}
\item $a^{k} \beta_{0} \geqslant \beta_{0} \geqslant \frac{1}{C_{\theta }}$.
\item $  | L_{0} - \frac{\lambda^{*}}{2\beta_{0}} \sum_{s=1}^{k} (-1)^{\sigma_{s}} 
(\frac{1}{a})^{s-1} | \leqslant C_{\theta} +   \frac{\lambda^{*}}{2\beta_{0}} \frac{1}{(1-\frac{1}{a})}
= C_{\theta} +  \frac{\lambda^{*}}{2\beta_{0}} \frac{4}{\lambda^{*}} \leqslant 3 C_{\theta}           $, 
\end{itemize}
where in the second term, we have implicitly used the fact that $\frac{1}{(1-\frac{1}{a})} = \frac{4}{\lambda^{*}}$.  The above two inequalities simply tell us that we can apply Lemma~\ref{Oscillation} to $u_{k}^{*}$ over $Q_{r^{k}}^{*}$ in either one of the following two cases:\\
\textbf{{Case 1:  $| \{ (t,x,z) \in Q_{r^k}^{*} : u_{k}^{*} \leqslant 0  \}  | \geqslant \frac{|Q_{r^k}^{*}|}{2}$}}.  We apply Lemma~\ref{Oscillation} to $u_{k}^{*}$ over $Q_{r^{k}}^{*}$ and deduce that $-2 \leqslant u_{k}^{*} \leqslant 2-\lambda^{*}$ on $Q_{r^{k+1}}^{*}$. Hence we have $|\frac{2}{2-\frac{\lambda^{*}}{2}} \{u_{k}^{*} + \frac{\lambda^{*}}{2}  \}| \leqslant 2 $ on $Q_{r^{k+1}}^{*}$.  We define $u_{k+1}$ as $u_{k+1} = a\{u_{k} + \frac{\lambda^{*}}{2}\}$. So, we have 
\begin{equation*}
u_{k+1} = a^{k+1} \beta_{0} \{ \theta - L_{0} + \frac{\lambda^{*}}{2\beta_{0}} \sum_{s=1}^{k} (-1)^{\sigma_{s}} (\frac{1}{a})^{s-1} +  \frac{\lambda^{*}}{2\beta_{0}} (\frac{1}{a})^{k} \}.
\end{equation*}\newline\noindent
\textbf{{Case 2: $| \{ (t,x,z) \in Q_{r^k}^{*} : -u_{k}^{*} \leqslant 0  \}  | \geqslant \frac{|Q_{r^k}^{*}|}{2}$}}.  We can apply Lemma~\ref{Oscillation} to  $-u_{k}^{*}$ over $Q_{r^{k}}^{*}$, and deduce that $-2 \leqslant -u_{k}^{*} \leqslant 2-\lambda^{*}$ over $Q_{r^{k+1}}^{*}$. Hence, we have $|\frac{2}{2-\frac{\lambda^{*}}{2}} \{u_{k}^{*} - \frac{\lambda^{*}}{2}  \}| \leqslant 2 $ on $Q_{r^{k+1}}^{*}$. Because of this, we define $u_{k+1} = a \{  u_{k}^{*} - \frac{\lambda^{*}}{2}  \}$. So, we have
\begin{equation*}
u_{k+1} = a^{k+1} \beta_{0} \{ \theta - L_{0} + \frac{\lambda^{*}}{2\beta_{0}} \sum_{s=1}^{k} (-1)^{\sigma_{s}} (\frac{1}{a})^{s-1} -  \frac{\lambda^{*}}{2\beta_{0}} (\frac{1}{a})^{k} \}.
\end{equation*}
\newline\indent
From the above inductive process, we have a sequence of functions 
\[
u_{k} = a^{k} \beta_{0} \{ \theta - L_{0} + \frac{\lambda^{*}}{2\beta_{0}} \sum_{s=1}^{k} (-1)^{\sigma_{s}} 
(\frac{1}{a})^{s-1}       \},
\]
which verify the following conditions
\begin{itemize}
\item  $|u_{k}^{*}| \leqslant 2$ on $Q_{r^{k}}^{*},$ for any  $k\geqslant 1$.
\item  $a^{k}\beta_{0} \geqslant \frac{1}{C_{\theta}}$, for any $k \geqslant 1$.
\item  $ | L_{0} - \frac{\lambda^{*}}{2\beta_{0}} \sum_{s=1}^{k} (-1)^{\sigma_{s}} 
(\frac{1}{a})^{s-1} | \leqslant 3 C_{\theta}    $, for any $k\geqslant 1$.
\end{itemize}
Therefore we can deduce for all $k \geqslant 1$
\begin{equation*}
 a^{k}\beta_{0} (  \sup_{Q_{r^{k}}^{*}}\theta^{*} - \inf_{Q_{r^{k}}^{*}} \theta^{*}   )
= \sup_{Q_{r^{k}}^{*}} u_{k}^{*} - \inf_{Q_{r^{k}}^{*}} u_{k}^{*} \leqslant 4. 
\end{equation*}
Thus
\begin{equation*}
 \sup_{Q_{r^{k}}^{*}}\theta^{*} - \inf_{Q_{r^{k}}^{*}} \theta^{*} \leqslant \frac{4}{\beta_{0}} (\frac{1}{a})^{k} \leqslant 4 C_{\theta } (\frac{1}{a})^{k}.
\end{equation*}
At this point, we note that the above inequality and the shift-invariant property of solutions of \eqref{Burger} give us the conclusion that $\theta^{*}$ is $C^{\alpha}$ at any $(t,x,z)$, and hence $\theta$ itself must be $C^{\alpha}$.  This completes the proof of Theorem 1.
\subsection{Proof of the Oscillation Lemma}
The proof closely follows \cite{Driftdiffusion}.  Assume Lemmas ~\ref{Lemma1} and ~\ref{Lemma2} hold, and let $\theta:[-1,0]\times \mathbb{R}^{N} \rightarrow \mathbb{R}$ be a solution to equation \eqref{Burger} with $\|\theta \|_{L^{\infty}([-1,0]\times \mathbb{R}^{N})} \leqslant C_{\theta }$ for some $C_{\theta} \in (0,\infty )$, as well as let $\epsilon_{0}$ (depending only on $N$ and $C_{\theta}$), and $\lambda \in (0,2)$ (depending only on $N$) be the two constants appearing in Lemma~\ref{Lemma1}. Also, consider the constant $D_{\theta}$ (depending only on $C_{\theta}$), which appears in Lemma~\ref{Lemma2}.  We choose $\epsilon_{1} = \{ \frac{1}{4D_{\theta}} \frac{\epsilon_{0}}{2}   \}^{2}$, so that we have $4D_{\theta} (\epsilon_{1})^{\frac{1}{2}} = \frac{\epsilon_{0}}{2} < \epsilon_{0}$. With such an $\epsilon_{1}$, we have a small number $\delta_{1}$ (depending only on $\epsilon_{1}$) as it appears in the statement of Lemma~\ref{Lemma2}.\\
\indent
With these preparations, let $u =\beta \{ \theta -  L  \}$, with $|\beta | \geqslant  \frac{1}{C_{\theta}} $, and $|L| \leqslant 3 C_{\theta }$, and suppose that $u$ verifies
\begin{itemize}
\item $u^{*} \leqslant 2$ on $Q_{1}^{*}$.
\item  $| \{ (t,x,z) \in Q_{1}^{*} : u^{*} \leqslant 0  \}  | \geqslant \frac{|Q_{1}^{*}|}{2}$.
\end{itemize}
Now, let us define $K_{+} \in \mathbb{N}^{+}$ to be the largest nonnegative integer for which $K_{+} \leqslant 1 + \frac{1}{\delta_{1}}$.  We then define a list of functions $w_{k}$, for $1\leqslant k \leqslant K_{+}$ by
\[
w_{k} =  2(w_{k-1} - 1),\quad 1\leqslant k \leqslant K_{+},\quad\mbox{with}\quad w_{0} = u.
\]
Then for every $1\leqslant k \leqslant K_{+}$ we have
\begin{equation*}
w_{k} = 2^{k} \{ u - 2  \} + 2 = 2^{k} \beta \{ \theta - L - \frac{2}{\beta } + \frac{1}{2^{k-1}\beta }  \}. 
\end{equation*}
Now, it is easy to see that for each $1\leqslant k \leqslant K_{+}$, $u^{*} \leqslant 2$ on $Q_{1}^{*}$ implies $ w_{k}^{*} = 2^{k} \{ u^{*} - 2  \} + 2 \leqslant 2 $ on $Q_{1}^{*}$. Moreover, since $\{ (t,x,z) \in Q_{1}^{*} : u^{*} \leqslant 0  \}$ is always a subset of $ \{ (t,x,z) \in Q_{1}^{*} : w_{k}^{*} \leqslant 0  \}   $ , we always have $ |\{ (t,x,z) \in Q_{1}^{*} : w_{k}^{*} \leqslant 0  \}| \geqslant \frac{|Q_{1}^{*}|}{2}  $.\\
Besides these, we also have to make the crucial observation that, for every $1\leqslant k \leqslant K_{+}$, we have
\begin{itemize}
\item  $ |2^{k} \beta | \geqslant |\beta | \geqslant \frac{1}{C_{\theta }}  $, 
and $ | L + \frac{2}{\beta } - \frac{1}{2^{k-1} \beta }  | \leqslant 6 C_{\theta } $.
\end{itemize}
This means that we can apply Lemma~\ref{Lemma1} and Lemma~\ref{Lemma2} to $w_{k}$ if we find that such an application is needed. At this point, we need to separate our discussion into two cases in the following way.\\
\emph{First}, if it happens that, for every $1\leqslant k \leqslant K_{+}$, we have $|\{ (t,x,z) \in Q_{1}^{*} : 0 < w_{k}^{*} < 1  \}| \geqslant \delta_{1}$, we then observe that we must have
\begin{equation*}
\begin{split}
|\{ (t,x,z) \in Q_{1}^{*} : w_{k}^{*} \leqslant 0  \}|
&=  |\{ (t,x,z) \in Q_{1}^{*} : w_{k-1}^{*} \leqslant 1  \} | \\ 
&\geqslant \delta_{1} +  |\{ (t,x,z) \in Q_{1}^{*} : w_{k-1}^{*} \leqslant 0  \}| ,  
\end{split}
\end{equation*}
for every $1\leqslant k \leqslant K_{+}$. Because of the above estimate, we can deduce inductively that $
|\{ (t,x,z) \in Q_{1}^{*} : w_{K_{+}}^{*} \leqslant 0  \}| \geqslant K_{+}\delta_{1} \geqslant |Q_{1}^{*}| $, which in turn tells us that $w_{K_{+}}^{*} = 2^{K_{+}} \{u^{*} - 2\} + 2 \leqslant 0$ almost everywhere on $Q_{1}^{*}$. Hence we have 
\begin{itemize}
\item $u^{*} \leqslant 2- \frac{2}{2^{K_{+}}}$, almost everywhere on $Q_{1}^{*}$.
\end{itemize}
So, we are done in the first case.\\
\indent
\emph{Second}, let us suppose the case in which there exists some $k_{0}\in \mathbb{N}$ with $1\leqslant k \leqslant K_{+}$, such that $|\{ (t,x,z) \in Q_{1}^{*} : 0 < w_{k_{0}}^{*} < 1  \}| < \delta_{1}$.  We can then apply Lemma~\ref{Lemma2} to $w_{k_{0}}$ and deduce
\begin{equation*}
\int_{Q_{\frac{1}{4}}^{*}} (w_{k_{0}}^{*} - 1)_{+}^{2} + \int_{Q_{\frac{1}{4}}} (w_{k_{0}} -1 )_{+}^{2} \leqslant D_{\theta} (\epsilon_{1})^{\frac{1}{2}} , 
\end{equation*}
which simply means 
\begin{equation*}
\int_{Q_{\frac{1}{4}}^{*}} (w_{k_{0}+1}^{*})_{+}^{2} + \int_{Q_{\frac{1}{4}}} (w_{k_{0}+1})_{+}^{2} \leqslant 4 D_{\theta} (\epsilon_{1})^{\frac{1}{2}} < \epsilon_{0} . 
\end{equation*}

Now, the above inequality tells us that we can apply Lemma~\ref{Lemma1} directly to $w_{k_{0}+1}$ over $Q_{\frac{1}{4}}^{*}$, and deduce that $w_{k_{0}+1} \leqslant 2-\lambda $ on $Q_{\frac{1}{16}}$, which implies
\begin{itemize}
\item $ u \leqslant 2 - \frac{\lambda}{2^{k_{0}+1}} $ on $Q_{\frac{1}{16}}$. 
\end{itemize}
To finish the argument, we consider the barrier function $b_{3} : B_{\frac{1}{16}}^{*} \rightarrow \mathbb{R}$ characterized by the following conditions
\begin{itemize}
\item $\triangle b_{3} = 0 $, on $B_{\frac{1}{16}}^{*}  $ .
\item $b_{3} =2$ on all the sides of $B_{\frac{1}{16}}^{*}$ except the one for $z=0$.
\item $b_{3} =  2 - \frac{\lambda}{2^{k_{0}+1}} $, on the side for $z=0$.
\end{itemize}
Then, by a simple application of the maximum principle, we know that there exists some constant $\lambda^{*}$, with $0 < \lambda^{*} < \frac{1}{2^{K_{0}+1}} \min\{1, \lambda \}$, such that $b_{3} \leqslant 2-\lambda^{*}$ on $B_{\frac{1}{32}}^{*}$. Since $u* = \beta \{ \theta^{*} - L  \}$ is harmonic and that $u^{*}$ is bounded above by $b_{3}$ along the sides of the cube $B_{\frac{1}{16}}^{*}$, it must follow that $u^{*} \leqslant b_{3} \leqslant 2- \lambda^{*}$ on $B_{\frac{1}{32}}^{*}$. So we are done in the second case.
\subsection{Proof of Lemma~\ref{Lemma1}}
The proof closely follows \cite{Driftdiffusion} except when the local energy inequality is employed.  Also we provide more details in Step Two below (step 7 in \cite{Driftdiffusion}).  For convenience, the following proof is given in the setting in which the $L^{\infty}$ solution $\theta$ of the $N$-dimensional critical Burgers' equation is defined on $[-4,0]\times \mathbb{R}^{N}$. The desired conclusion of Lemma~\ref{Lemma1} can be obtained by rescaling.\\\\
\noindent{\bf Step One: Determination of the constant $\lambda$ and of the sequence of truncated energy terms $A_{k}$.}\\
\indent We begin by constructing the universal constant $\lambda$. For this purpose, we consider the barrier function $b_{1} : B_{4}^{*} \rightarrow \mathbb{R}$ which verifies the following conditions
\begin{itemize}
\item $\triangle b_{1} = 0$ on $B_{4}^{*}$.
\item $b_{1} = 2$, on all the sides of the cube $B_{4}^{*}$, \emph{except} for the one with $z=0$.
\item $b_{1} = 0$, on the side of $B_{4}^{*}$  specified by $z=0$. 
\end{itemize}
Since $b_{1}$ is harmonic on $B_{4}^{*}$, we use the maximum principle to deduce that there exists some sufficiently small $\lambda$ with $0 < \lambda < \frac{1}{2}$, such that $0 \leqslant b_{1} \leqslant 2- 4 \lambda$ is valid over $B_{2}^{*}$. We note that $\lambda$ depends only on $N$.\\
\indent
Next, let $\theta : [-4,0]\times \mathbb{R}^{N} \rightarrow \mathbb{R}$ be a solution of \eqref{Burger}, which verifies $\|\theta \|_{L^{\infty}([-4,0]\times \mathbb{R}^{N})} \leqslant C_{\theta}$.  We set $u = \beta [\theta -L]$, with $|\beta | \geqslant  \frac{1}{C_{\theta }}$, and $|L| \leqslant 6 C_{\theta}$, and define for each $k \geqslant 1$
\[
u_{k} = \{ u - C_{k} \}_{+},\quad\mbox{and}\quad u_{k}^{*} = \{ u^{*} - C_{k} \}_{+},
\]
where $C_{k} = 2-\lambda (1 + \frac{1}{2^{k}})$.  We now consider the following quantity for each $k \geqslant 1$
\begin{equation}\label{1}
A_{k} = \int_{T_{k}}^{0}\int_{0}^{\delta^{k}}\int_{\mathbb{R}^{N}} |\nabla (\eta_{k} u_{k}^{*})|^{2} dx dz dt  + \|\eta_{k} u_{k}\|_{L^{\infty}(T_{k},0;L^{2}(\mathbb{R}^{N}))}^{2},
\end{equation}
where $T_{k} = -1-\frac{1}{2^{k}}$, and $\{\eta_{k}\}_{k=1}^{\infty}$ is a (fixed) sequence of functions in $C_{c}^{\infty} (\mathbb{R}^{N})$ such that
\[
\chi_{B(1+\frac{1}{2^{k+\frac{1}{2}}})} \leqslant \eta_{k} \leqslant \chi_{B(1+ \frac{1}{2^{k}})},
\quad\mbox{and}\quad |\nabla \eta_{k}|\leqslant C 2^{k},\quad k \geqslant 0.
\]
The integral along the $z$-direction in \eqref{1} is taken over $[0, \delta^{k} ]$, for some sufficiently small $\delta$. We will select such $\delta$, in a way depending \emph{only} on $\lambda$. We choose $\delta$ in Step Four.\\
\indent
Now, we observe that the conclusion of Lemma~\ref{Lemma1} follows at once, \emph{provided} we succeed in building up a nonlinear recurrence relation on $A_{k}$ by using the De-Giorgi's technique. We are now going to build up such a nonlinear recurrence relation for $A_{k}$ \emph{under the assumption that the following two conditions are valid}
\begin{align}
&\eta_{k}u_{k}^{*} = 0 , \forall z\in [\delta^{k} , 2  ],\label{3}\\
&\eta_{k+1} u_{k+1}^{*} \leqslant [(\eta_{k} u_{k})*P(z)]\eta_{k+1}, \forall (x,z) \in B(1+\frac{1}{2^{k}})\times [0,\delta^{k}].
\label{4}
\end{align}
The symbol $P(z)$ appearing in condition \eqref{4} stands for the Poisson kernel $P(\cdot , z)$.\\

\noindent{\bf Step Two: Establishing the nonlinear recurrence relation for $A_{k}$ by assuming the validity of conditions ~\eqref{3}   and \eqref{4} .}

To begin, we observe that for each $k \geqslant 1$, we may express the function $u-C_{k}$ as
\begin{equation*}
u-C_{k} = \beta \{  \theta -L -\frac{C_{k}}{\beta}  \} ,
\end{equation*}
with $|\beta | \geqslant \frac{1}{C_{\theta }}$, and $|L + \frac{C_{k}}{\beta}| \leqslant \{ 6 + C_{k}\}C_{\theta} \leqslant 8C_{\theta }$. This means that we can apply Proposition~\ref{LEI} directly to $u-C_{k} = \beta \{  \theta -L -\frac{C_{k}}{\beta}  \}  $, and deduce that the following inequality is valid for every $k\geqslant 0$
\begin{equation}\label{undercontrol}
\begin{split}
&\int_{\sigma}^{t} \int_{B_{2}^{*}} |\nabla (\eta u_{k}^{*})|^{2} dx dz ds + \int_{B_{2}} (\eta u_{k})^{2}(t,x) dx \\
&\quad\leqslant \int_{B_{2}} (\eta u_{k})^{2}(\sigma , x) dx + \Phi \int_{\sigma}^{t} \int_{B_{2}} |\nabla \eta|^{2} u_{k}^{2} dx ds+ \int_{\sigma}^{t}\int_{B_{2}^{*}} |\nabla \eta|^{2} (u_{k}^{*})^{2} dx dz ds,
\end{split}
\end{equation}
where, in the above inequality, we have $\Phi = 2NC_{N}\{ 8C_{\theta} + C_{\theta} \}^{2}$ and $\eta$ is some smooth cut off function compactly supported inside $B_{2}^{*}$.\\
By \emph{assuming the validity of condition \eqref{3} at step $k$}, that is
\[ 
\eta_{k} u_{k}^{*} = 0,
\]
for all $z \in [\delta^{k}, 2]$, we know that the function 
\[
\eta_{k} u_{k}^{*} \chi_{\{   0\leqslant z \leqslant \delta^{k} \}  } = 
\eta_{k} u_{k}^{*} \chi_{ \{  0\leqslant z \leqslant 2  \} }
\]
has no jump-discontinuity at $z=\delta^{k}$. We now choose some smooth function $\psi : [0,\infty )\rightarrow \mathbb{R}$ which verifies the following conditions
\begin{itemize}
\item $0\leqslant \psi (z) \leqslant 1$ , for all $z\in [0,\infty )$.
\item $\psi (z) = 1 $, for all $z \in [0,1]$.
\item $\psi (z) = 0$, for all $z \in [2,\infty )$.
\item $|\frac{d\psi}{dz}| \leqslant 2$, for all $z \in [0,\infty )$.
\end{itemize}
We then apply inequality \eqref{undercontrol} with the cut off function $\eta_{k} \psi$ and deduce that the following inequality is valid for all $\sigma$, $t$ with $T_{k-1} \leqslant \sigma \leqslant T_{k} \leqslant t \leqslant 0$ (where $T_{k} = -1 -\frac{1}{2^{k}}$)
\begin{equation}\label{C}
\begin{split}
&\int_{\sigma}^{t} \int_{B_{2}^{*}} |\nabla (\eta_{k} \psi u_{k}^{*})|^{2}  + \int_{B_{2}} (\eta_{k} u_{k})^{2}(t,x) dx \\
&\quad\leqslant \int_{B_{2}} (\eta_{k} u_{k})^{2}(\sigma , x) dx + \Phi \int_{\sigma}^{t} \int_{B_{2}} |\nabla \eta_{k}|^{2} u_{k}^{2} dx ds+ \int_{\sigma}^{t}\int_{B_{2}^{*}} |\nabla (\eta_{k} \psi )|^{2} (u_{k}^{*})^{2} dx dz ds.
\end{split}
\end{equation}
We next notice that $ \eta_{k} \psi u_{k}^{*} = \eta_{k} u_{k}^{*} \chi_{\{ 0\leqslant z \leqslant \delta^{k}\} } = \eta_{k} u_{k}^{*} \chi_{\{   0\leqslant z \leqslant 2 \}} $, and this implies that 
\begin{equation*}
\int_{\sigma}^{t} \int_{B_{2}^{*}} |\nabla (\eta_{k} \psi u_{k}^{*})|^{2} = 
\int_{\sigma}^{t} \int_{0}^{2}\int_{\mathbb{R}^{N}} |\nabla ( \eta_{k} u_{k}^{*} \chi_{\{   0\leqslant z \leqslant 2 \}}    )|^{2} 
= \int_{\sigma}^{t} \int_{0}^{\delta^{k}}\int_{\mathbb{R}^{N}} |\nabla ( \eta_{k} u_{k}^{*}     )|^{2} 
\end{equation*}
Next, let us recall that according to the definition of $\eta_{k}$ we have
\begin{itemize}
\item $ \eta_{k} \leqslant \chi_{B(1+\frac{1}{2^{k}})} \leqslant \chi_{B(1+\frac{1}{2^{k-\frac{1}{2}}   })} \leqslant \eta_{k-1} \leqslant  \chi_{B(1+\frac{1}{2^{k-1}   })}                 $ . 
\item $|\nabla \eta_{k}| \leqslant C 2^{k} \chi_{B(1+\frac{1}{2^{k}})}$.
\end{itemize}

So, it follows that

\begin{itemize}
\item $|\nabla ( \eta_{k} \psi   )| \leqslant |\nabla \eta_{k}| \psi + \eta_{k} |\frac{d \psi}{dz}|
\leqslant (C 2^{k} +2) \chi_{B(1+\frac{1}{2^{k}})} \leqslant (C 2^{k} +2) \eta_{k-1}$ . 
\item $ |\nabla \eta_{k}| \leqslant C 2^{k} \chi_{B(1+\frac{1}{2^{k}})} \leqslant C 2^{k} \eta_{k-1}    $.
\end{itemize}
Combining all these, it follows from \eqref{C} that the following inequality is valid for all $\sigma$, $t$ with $T_{k-1} \leqslant \sigma \leqslant T_{k} \leqslant t \leqslant 0$ 
\begin{equation*}
\begin{split}
&\int_{\sigma}^{t} \int_{0}^{\delta^{k}} \int_{\mathbb{R}^{N}}        |\nabla (\eta_{k}  u_{k}^{*})|^{2}  + \int_{\mathbb{R}^{N}} (\eta_{k} u_{k})^{2}(t,x) dx \\
&\quad\leqslant \int_{\mathbb{R}^{N}} (\eta_{k} u_{k})^{2}(\sigma , x) dx 
 + \Phi \int_{\sigma}^{t} \int_{\mathbb{R}^{N}} C 2^{2k}(\eta_{k-1} u_{k})^{2} dx ds
+ \int_{\sigma}^{t}\int_{0}^{2} \int_{\mathbb{R}^{N}}
C   2^{2k}       ( \eta_{k-1}   u_{k}^{*})^{2} dx dz ds.
\end{split}
\end{equation*}
By taking the average among all the terms appearing in the above inequality over the variable $\sigma \in [T_{k-1}, T_{k}]$, and then taking the $\sup$ over $t\in [T_{k}, 0 ]$, we yield the following
\begin{equation*}
\begin{split}
&A_{k} \leqslant 2^{k} \int_{T_{k-1}}^{T_{k}} \int_{\mathbb{R}^{N}} (\eta_{k} u_{k})^{2}(\sigma , x) dx d\sigma\\ 
&\quad\quad\quad+  C(1+\Phi )2^{2k}
\{  \int_{T_{k-1}}^{0} \int_{\mathbb{R}^{N}} (\eta_{k-1} u_{k})^{2} dx ds
+ \int_{T_{k-1}}^{0}\int_{0}^{2} \int_{\mathbb{R}^{N}}
  ( \eta_{k-1}   u_{k}^{*})^{2} dx dz ds.\}
\end{split}
\end{equation*}
Our goal is to raise up the index for the three terms appearing in the right hand side of the above inequality. We just focus on $ \int_{T_{k-1}}^{0}\int_{0}^{2} \int_{\mathbb{R}^{N}}   ( \eta_{k-1}   u_{k}^{*})^{2}   $ (which is the most difficult among the three), and we remember our lucky number $2(\frac{N+1}{N})$ from the process of applying the De-Giorgi's method in section \ref{weaksolutions}.\\
\indent Now, by using the facts $\chi_{\{ u_{k}^{*} > 0  \}} \leqslant \chi_{\{ u_{k-1}^{*} > \frac{\lambda }{2^{k}}   \}} \leqslant \frac{2^{k}}{\lambda } u_{k-1}^{*}$, and 
$\eta_{k-1} \leqslant \chi_{B(1+\frac{1}{2^{k-1}})} \leqslant \eta_{k-2}$, we can deduce that
\begin{equation*}
\begin{split}
 ( \eta_{k-1}   u_{k}^{*})^{2} 
\leqslant \chi_{B(1+\frac{1}{2^{k-1}})}(u_{k}^{*})^{2} \chi_{\{ u_{k}^{*} > 0  \}} 
& \leqslant \chi_{B(1+\frac{1}{2^{k-1}})}(u_{k}^{*})^{2} ( \frac{2^{k}}{\lambda }u_{k-1}^{*})^{\frac{2}{N}} \\
& \leqslant \chi_{B(1+\frac{1}{2^{k-1}})} ( \frac{2^{k}}{\lambda } )^{\frac{2}{N}}
( u_{k-1}^{*}  )^{2(\frac{N+1}{N})} \\
& \leqslant ( \frac{2^{k}}{\lambda } )^{\frac{2}{N}}
( \eta_{k-2} u_{k-1}^{*}  )^{2(\frac{N+1}{N})}
\end{split}
\end{equation*}
Hence, it follows at once from the above inequality that
\begin{equation}\label{D}
\begin{split}
\int_{T_{k-1}}^{0}\int_{0}^{2} \int_{\mathbb{R}^{N}}
( \eta_{k-1}   u_{k}^{*})^{2} dx dz ds
& \leqslant \int_{T_{k-1}}^{0}\int_{0}^{2} \int_{\mathbb{R}^{N}} ( \frac{2^{k}}{\lambda } )^{\frac{2}{N}}
( \eta_{k-2} u_{k-1}^{*}  )^{2(\frac{N+1}{N})} \\
& \leqslant   ( \frac{2^{k}}{\lambda } )^{\frac{2}{N}}\int_{T_{k-1}}^{0}\int_{0}^{2} 
\|\eta_{k-2} u_{k-2}^{*}\|_{L^{2(\frac{N+1}{N})}(\mathbb{R}^{N})}^{2(\frac{N+1}{N})} dz dt \\
&  = ( \frac{2^{k}}{\lambda } )^{\frac{2}{N}}\int_{T_{k-1}}^{0}\int_{0}^{\delta^{k-2}} 
\|\eta_{k-2} u_{k-2}^{*}\|_{L^{2(\frac{N+1}{N})}(\mathbb{R}^{N})}^{2(\frac{N+1}{N})} dz dt
\end{split}
\end{equation}
(In the last line of the above estimate, we implicitly employ \eqref{3} at step $k-2$.)
Now, by \emph{assuming the validity of \eqref{3} at step $k-3$} ( That is, 
$\eta_{k-3} u_{k-3}^{*} = 0$, for all $z \in [\delta^{k-3}, 2]$ ), we know that the function $\eta_{k-3} u_{k-3}^{*} \chi_{\{   0\leqslant z \leqslant \delta^{k-3} \}  } = 
\eta_{k-3} u_{k-3}^{*} \chi_{ \{  0\leqslant z \leqslant 2  \} }$ has no jump-discontinuity at $z=\delta^{k-3}$ and has the same trace as $(\eta_{k-3} u_{k-3})^{*}  $ at $z=0$, we can use the energy minimization property of harmonic extension to deduce that the following estimate is valid at step $k-3$.

\begin{equation*}
\begin{split}
\int_{0}^{\delta^{k-3}} \int_{\mathbb{R}^{N}} |\nabla ( \eta_{k-3} u_{k-3}^{*}  )|^{2} dx dz
& = \int_{0}^{\infty} \int_{\mathbb{R}^{N}} |\nabla ( \eta_{k-3} u_{k-3}^{*} \chi_{\{   0\leqslant z \leqslant \delta^{k-3} \}}  )|^{2} dx dz \\
& \geqslant \int_{0}^{\infty} \int_{\mathbb{R}^{N}} |\nabla \{ (\eta_{k-3} u_{k-3})^{*} \} |^{2} dx dz \\
& = \int_{\mathbb{R}^{N}}  \eta_{k-3} u_{k-3} \cdot (-\triangle )^{\frac{1}{2}} ( \eta_{k-3} u_{k-3}  )  dx .
\end{split}
\end{equation*}
Because of this last inequality, we can use the Sobolev embedding and H\"older's inequality to obtain
\begin{equation*}
\|\eta_{k-3} u_{k-3}\|_{L^{2(\frac{N+1}{N})}( [T_{k-3} , 0]\times \mathbb{R}^{N})} \leqslant A_{k-3}^{\frac{1}{2}} . 
\end{equation*}
By comparing the above inequalities, we see that we need a passage from the term $ \|\eta_{k-2} u_{k-2}^{*}\|_{L^{2(\frac{N+1}{N})}(\mathbb{R}^{N})}  $ to the term $\|\eta_{k-3} u_{k-3}\|_{L^{2(\frac{N+1}{N})}(\mathbb{R}^{N})}  $, and such a passage is provided to us by condition \eqref{4}  (at step $k-3$ ). Indeed, \emph{ by assuming the validity of condition \eqref{4} at step $k-3,$} Young's inequality tells us that, for every $t\in [-2,0]$ and every $z\in (0,\delta^{k-2} )$, we have
\begin{equation*}
\begin{split}
\|\eta_{k-2} u_{k-2}^{*}\|_{L^{2(\frac{N+1}{N})}(\mathbb{R}^{N})}
& \leqslant \|P(z)\|_{L^{1}(\mathbb{R}^{N})} \|\eta_{k-3} u_{k-3}\|_{L^{2(\frac{N+1}{N})}(\mathbb{R}^{N})} \\
& = \|P(1)\|_{L^{1}(\mathbb{R}^{N})} \|\eta_{k-3} u_{k-3}\|_{L^{2(\frac{N+1}{N})}(\mathbb{R}^{N})} ,
\end{split}
\end{equation*}
where the last equality is valid just because we always have $  \|P(z)\|_{L^{1}(\mathbb{R}^{N})}  = \|P(1)\|_{L^{1}(\mathbb{R}^{N})}$. So, it follows from \eqref{D} that 
\begin{equation*}
\begin{split}
\int_{T_{k-1}}^{0}\int_{0}^{2} \int_{\mathbb{R}^{N}} ( \eta_{k-1}   u_{k}^{*})^{2} dx dz ds 
& \leqslant ( \frac{2^{k}}{\lambda } )^{\frac{2}{N}}\int_{T_{k-1}}^{0}\int_{0}^{\delta^{k-2}} 
\|\eta_{k-2} u_{k-2}^{*}\|_{L^{2(\frac{N+1}{N})}(\mathbb{R}^{N})}^{2(\frac{N+1}{N})} dz dt \\
& \leqslant ( \frac{2^{k}}{\lambda } )^{\frac{2}{N}}\int_{T_{k-1}}^{0}\int_{0}^{\delta^{k-2}} 
\|P(1)\|_{L^{1}(\mathbb{R}^{N})}^{2(\frac{N+1}{N})} \|\eta_{k-3} u_{k-3}\|_{L^{2(\frac{N+1}{N})}(\mathbb{R}^{N})}^{2(\frac{N+1}{N})} dz dt \\
& \leqslant \delta^{k-2} \|P(1)\|_{L^{1}(\mathbb{R}^{N})}^{2(\frac{N+1}{N})}( \frac{2^{k}}{\lambda } )^{\frac{2}{N}} \|\eta_{k-3} u_{k-3}\|_{L^{2(\frac{N+1}{N})}([T_{k-3}, 0]\times \mathbb{R}^{N})}^{2(\frac{N+1}{N})} \\
& \leqslant  \|P(1)\|_{L^{1}(\mathbb{R}^{N})}^{2(\frac{N+1}{N})}( \frac{2^{k}}{\lambda } )^{\frac{2}{N}} A_{k-3}^{1+\frac{1}{N}} .
\end{split}
\end{equation*}
So, we have raised up the index for $\int_{T_{k-1}}^{0}\int_{0}^{2} \int_{\mathbb{R}^{N}} ( \eta_{k-1}   u_{k}^{*})^{2} dx dz ds   $. The other two terms, namely 
$ \int_{T_{k-1}}^{T_{k}} \int_{\mathbb{R}^{N}} (\eta_{k} u_{k})^{2}(\sigma , x) dx d\sigma  $ 
and $ \int_{T_{k-1}}^{0} \int_{\mathbb{R}^{N}} (\eta_{k-1} u_{k})^{2} dx ds   $ can be treated in a similar way. As a result, with the assistance of condition \eqref{3} and condition \eqref{4}, we are able to obtain the following nonlinear recurrence relation at step $k$.
\begin{equation}\label{2}
A_{k} \leqslant C_{0}^{k}A_{k-3}^{1+\frac{1}{N}} , 
\end{equation}
where, in the above nonlinear recurrence relation, $C_{0}$ stands for some constant depending only on $C_{\theta}$ and $N$. More precisely, we can summarize what we have done in the following way
\begin{itemize}
\item For every $k \geqslant 3$, if condition \eqref{3} is valid at steps $k-3$, $k-2$, $k$, and condition \eqref{4} is valid at step $k-3$, then it follows that the nonlinear recurrence relation \eqref{2} is valid at step $k$ also. 
\end{itemize}

\noindent{\bf  Step Three: Establishing condition \eqref{4} at step $k$ by assuming the validity of condition \eqref{3} at step $k$ . }\\
We need to introduce another barrier function $b_{2} : [0,\infty ) \times [0,1] \rightarrow \mathbb{R}$, which verifies  
\begin{itemize}
\item $\triangle b_{2}= 0 $ on  $(0,\infty ) \times (0,1)$.
\item $b_{2} (0,z) = 2$ , for $z \in (0,1)$.
\item $ b_{2}(x,0) = b_{2}(x,1) = 0 $, for $x\in (0,\infty )   $.
\end{itemize}
Now, \emph{by assuming the validity of \eqref{3} at step $k$,} we are ready to establish condition \eqref{4} at step $k$ by controlling the behavior of $u_{k}^{*}$ over $B(1+\frac{1}{2^{k+\frac{1}{2}}})\times [0, \delta^{k}]$, where the suitable $\delta$ will be chosen (once and for all, and in a way depending only on $N$) during this procedure.

Indeed, a direct application of the maximum principle (together with \eqref{3} at step $k$ ) yields the following expression on $B(1+\frac{1}{2^{k+\frac{1}{2}}})\times [0, \delta^{k}]$
\begin{equation*}
u_{k}^{*} \leqslant (\eta_{k} u_{k})*P(z) + 
\sum_{i=1}^{N} b_{2}( \frac{x^{+} -x_{i} }{\delta^{k}}  ,  \frac{z}{\delta^{k}}  ) + 
b_{2}( \frac{ x_{i} - x^{-}    }{\delta^{k}}  ,  \frac{z}{\delta^{k}}  )  ,
\end{equation*}
where $x^{+} =(1+\frac{1}{2^{k+\frac{1}{2}}}) $, $x^{-} = -x^{+} $ , and that $b_{2}$ verifies $b_{2} (x,z) \leqslant 2(2)^{\frac{1}{2}} e^{\frac{-x}{2}}$. Hence, when the scaling factor $\frac{1}{\delta^{k}}$ gets involved in the variables of $b_{2}$, we yield the following inequality which is valid over the same set $B(1+\frac{1}{2^{k+\frac{1}{2}}})\times [0, \delta^{k}]$.

\begin{equation*}
b_{2}( \frac{x^{+} -x_{i} }{\delta^{k}}  ,  \frac{z}{\delta^{k}}  ) + 
b_{2}( \frac{ x_{i} - x^{-}    }{\delta^{k}}  ,  \frac{z}{\delta^{k}}  ) 
\leqslant 2(2)^{\frac{1}{2}} [e^{\frac{-(x^{+}-x_{i})}{2 \delta^{k}}} +  e^{\frac{-(x_{i}-x^{-})}{2 \delta^{k}}}   ] .
\end{equation*}

Now, if we restrict $x$ to be in $B(1+\frac{1}{2^{k+1}})$, we have  
$\min \{ |x^{+}- x_{i}| , |x_{i} - x^{-} | \} \geqslant \frac{1}{2(2^{\frac{1}{2}} + 1 )2^{k} }$, for all $x \in B(1+\frac{1}{2^{k+1}})$. So, the above inequality implies that the following holds over the smaller set $B(1+\frac{1}{2^{k+1}})\times [0,\delta^{k}]$

\begin{equation*}
 b_{2}( \frac{x^{+} -x_{i} }{\delta^{k}}  ,  \frac{z}{\delta^{k}}  ) + 
b_{2}( \frac{ x_{i} - x^{-}    }{\delta^{k}}  ,  \frac{z}{\delta^{k}}  )
\leqslant 2 (2 (2^{\frac{1}{2}})) e^{ \frac{ -1 }{4( 2^{\frac{1}{2}} +1  ) 2^{k} \delta^{k}  }  } .
\end{equation*}
Hence, the following inequality is also valid over $B(1+\frac{1}{2^{k+1}})\times [0,\delta^{k}]$
\begin{equation*}
u_{k}^{*} \leqslant (\eta_{k} u_{k})*P(z) + 2N(2 (2^{\frac{1}{2}})) e^{ \frac{ -1 }{4( 2^{\frac{1}{2}} +1  ) 2^{k} \delta^{k}  }  } .
\end{equation*}
Since $u_{k+1}^{*} \leqslant [u_{k}^{*} -\frac{\lambda }{2^{k+1}}]_{+}$, the above inequality implies that we have over  $B(1+\frac{1}{2^{k+1}})\times [0,\delta^{k}]$
\begin{equation}\label{8}
u_{k+1}^{*} \leqslant [   (\eta_{k} u_{k})*P(z) + 2N(2 (2^{\frac{1}{2}})) e^{ \frac{ -1 }{4( 2^{\frac{1}{2}} +1  ) 2^{k} \delta^{k}  }  } - \frac{\lambda }{2^{k+1}}  ]_{+} .
\end{equation}
Here, let us discuss what we have done. The above way of arriving at inequality \eqref{8} is just a simple application of the maximum principle with the participation of the barrier function $b_{2}$ with its width in the z-direction being compressed by the scaling factor $\frac{1}{\delta^{k}}$. However, \eqref{8} eventually forces us to compare $ 2N(2 (2^{\frac{1}{2}})) e^{ \frac{ -1 }{4( 2^{\frac{1}{2}} +1  ) 2^{k} \delta^{k}  }  }$ with $\frac{\lambda }{2^{k+1}}$. This motivates us to choose $\delta$ to be sufficiently small \emph{so that the following holds for all $k\geqslant 1$}
\begin{equation}\label{5}
2N(2 (2)^{\frac{1}{2}}) e^{-\frac{1}{4 \{ (2)^{\frac{1}{2}} + 1 \}  (2\delta )^{k}  }} \leqslant \frac{\lambda}{2^{k+2}} . 
\end{equation}
Observe that $\delta$, which makes \eqref{5} valid for all $k \geqslant 1$, depends only on $N$.  Once $\delta$ is chosen and fixed, \eqref{8} (at step $k$) together with the assistance of \eqref{5} give $u_{k+1}^{*} \leqslant [(\eta_{k} u_{k})*P(z) - \frac{\lambda}{2^{k+2}}   ]_{+}$ over $B(1+\frac{1}{2^{k+1}})\times [0,\delta^{k}]$, and this in turn gives us the validity of condition \eqref{4} at step $k$. Now, let us summarize what we have achieved in this step
\begin{itemize}
\item  We can always select a sufficiently small $\delta > 0$ for which \emph{condition~\ref{5} is valid for all $k \geqslant 1$}. For such $\delta > 0$, the validity of condition \eqref{3} at step $k$ directly implies the validity of condition \eqref{4} at step $k$.  
\end{itemize}
\noindent{\bf Step Four : Propagation of condition \eqref{3}.}\\
Now, let $\delta > 0$ be the fixed, sufficiently small constant which makes condition \eqref{5} valid for all $k \geqslant 1$. Now, we \emph{attempt} to derive condition \eqref{3} \emph{at step $k+1$} by \emph{assuming} the validity of \eqref{3} at step $k$.

To do this, let us recall that inequality \eqref{8} at step $k$ and condition~\ref{5} together give $u_{k+1}^{*} \leqslant [(\eta_{k} u_{k})*P(z) - \frac{\lambda}{2^{k+2}}   ]_{+}$ over $B(1+\frac{1}{2^{k+1}})\times [0,\delta^{k}]$. In order to obtain \eqref{3} at step $k+1$, we may just take advantage of the inequality we just mentioned and deduce that
\begin{equation*}
\|(\eta_{k} u_{k}) * P(z)\|_{L^{\infty}(\mathbb{R}^{N})} 
\leqslant \|\eta_{k} u_{k}\|_{L^{2}(\mathbb{R}^{N})} \|P(z)\|_{L^{2}(\mathbb{R}^{N})}
\leqslant (A_{k})^{\frac{1}{2}} \frac{1}{z^{\frac{N}{2}}} \|P(1)\|_{L^{2}(\mathbb{R}^{N})},
\end{equation*}
where the second inequality comes from the definition of $A_{k}$ and the fact that $\|P(z)\|_{L^{2}(\mathbb{R}^{N})} = \frac{1}{z^{\frac{N}{2}}} \|P(1)\|_{L^{2}(\mathbb{R}^{N})}   $. 
But this eventually tells us that over $B(1+\frac{1}{2^{k+1}})\times [\delta^{k+1}, \delta^{k}]$ we have
\begin{equation}\label{9}
u_{k+1}^{*} \leqslant [  (A_{k})^{\frac{1}{2}} \frac{1}{z^{\frac{N}{2}}} \|P(1)\|_{L^{2}(\mathbb{R}^{N})}   -\frac{\lambda}{2^{k+2}} ]_{+}
\leqslant [  (A_{k})^{\frac{1}{2}}  \frac{1}{\delta^{\frac{N}{2}(k+1)}}  \|P(1)\|_{L^{2}(\mathbb{R}^{N})} -  \frac{\lambda}{2^{k+2}}  ]_{+}.
\end{equation}
(We note that the second inequality is valid because $\delta^{k+1} \leqslant z \leqslant \delta^{k}$). Here, we have to keep in mind that condition \eqref{3} \emph{at step $k+1$} is what we want. So, by taking a closer look at \eqref{9}, it is natural that we want to have the following inequality (because we want $u_{k+1}^{*} = 0$ on $B(1+\frac{1}{2^{k+1}})\times [\delta^{k+1}, \delta^{k}]$ )
\begin{equation}\label{10}
(A_{k})^{\frac{1}{2}}  \frac{1}{\delta^{\frac{N}{2}(k+1)}}  \|P(1)\|_{L^{2}(\mathbb{R}^{N})} \leqslant \frac{\lambda}{2^{k+2}} .
\end{equation}
But this simply forces us to admit that the following two conditions should be true for \emph{any} sufficiently large $M$ ($M$ should be greater than $\{\frac{2}{\delta^{\frac{N}{2}}}\}^{2} $)
\begin{equation}\label{6}
A_{k} \leqslant \frac{1}{M^{k}} .
\end{equation}
\begin{equation}\label{7}
\frac{1}{M^{\frac{k}{2}}\delta^{\frac{N}{2}(k+1)}} \|P(1)\|_{L^{2}(\mathbb{R}^{N})} \leqslant \frac{\lambda }{2^{k+2}} , \forall k \geqslant 1. 
\end{equation}
The reason is that we have already seen a sequence $\{  ( \frac{\delta^{\frac{N}{2}}}{2} )^{k}   \}_{k=1}^{\infty}$ appearing in inequality \eqref{10}, which is a sequence decaying to $0$ as $k$ increases. If we would like to construct another sequence decaying in a rate faster than $\{  ( \frac{\delta^{\frac{N}{2}}}{2} )^{k}   \}_{k=1}^{\infty}$, the best thing to do is to choose some $\{  \frac{1}{M^{k}}   \}_{k=1}^{\infty}$, with $M$ to be large when compared with $\frac{2}{\delta^{\frac{N}{2}}}$. Hence $\{  \frac{1}{M^{k}}   \}_{k=1}^{\infty}$ will decay faster than $\{  (\frac{\delta^{\frac{N}{2}}}{2} )^{k}   \}_{k=1}^{\infty}$. This observation more or less explains the origins of conditions \eqref{6} and \eqref{7}. However, it is important to observe that condition \eqref{7} \emph{automatically becomes valid for any sufficiently large $M > 1$}, while condition \eqref{6} and condition \eqref{3} mutually depend on each other in a delicate way, just as we will see in our next step. But now, let us summarize the result we have obtained in this step as follow.
\begin{itemize}
\item For the fixed choice of sufficiently small $\delta$ as selected in Step Three, and for any sufficiently large $M >1$ as selected in Step Four, condition \eqref{3} at step $k$, together with condition \eqref{6} at step $k$, will imply the validity of condition \eqref{3} at step $k+1$. 
\end{itemize}

\noindent{\bf Step Five : Propagation of condition \eqref{6} and it's relation to the nonlinear recurrence relation \eqref{2} for the truncated energy terms $A_{k}$.}\\
In this step, by assuming condition \eqref{3} at step $k-3$ and also condition \eqref{6} at steps $k-3$, $k-2$, $k-1$, we \emph{attempt} to deduce the validity of condition \eqref{6} at step $k$. In our present circumstance, by applying the conclusion of Step Four to conditions \eqref{3} and \eqref{6} at steps $k-3$, $k-2$, and $k-1$ successively, we can deduce that \emph{our assumptions will imply the validity of condition \eqref{3} at steps $k-2$, $k-1$, and $k$ also}. Hence, we can invoke the conclusion obtained in Step Two to deduce that $A_{k} \leqslant C_{0}^{k}A_{k-3}^{1+\frac{1}{N}}$ is valid at step $k$. This, together with the validity of condition \eqref{6} at step $k-3$, will in turns imply that we have the following inequality to be valid
\begin{equation*}
A_{k} \leqslant C_{0}^{k}A_{k-3}^{1+\frac{1}{N}} \leqslant C_{0}^{k} \{ \frac{1}{M^{k-3}} \}^{1+\frac{1}{N}} .
\end{equation*}
Because of the above inequality, we will have the validity of condition \eqref{6} at step $k$, \emph{provided if $M$ is chosen to be sufficiently large so that the following condition becomes valid for all $k \geqslant 12 N$ } (For more details about this, see Lemma 7 \cite{Driftdiffusion}).
\begin{equation}\label{closecircle}
\frac{1}{M^{k}} \geqslant  C_{0}^{k} \{ \frac{1}{M^{k-3}} \}^{1+\frac{1}{N}} .
\end{equation}
More precisely, we obtain the following conclusion in this step
\begin{itemize}
\item  If $M$ is chosen to be large enough, condition \eqref{closecircle} will become valid for all  $k \geqslant 12 N$ (For a formal proof of this fact, see Lemma 7 of \cite{Driftdiffusion} ). For any such sufficiently large $M$ being selected, condition \eqref{3} at step $k-3$, together with condition \eqref{6} at steps $k-3$, $k-2$, $k-1$, will give the validity of condition \eqref{6} at step $k$.
\end{itemize}
\noindent{\bf Step Six : Completing  the argument by taking the initial steps.}\\
Before we complete the proof of Lemma \eqref{Lemma1}, let us summarize what we have achieved from Step One to Step Five.

We recall that after the universal constant $\lambda \in ( 0 , \frac{1}{2} )$ is chosen in Step One, we have determined $\delta > 0$ (which depends only on $N$), and some sufficiently large $M > 1 $ (which depends only on $N$ and $C_{\theta }$) such that the following three conditions (which are conditions \eqref{5}, \eqref{7}, and \eqref{closecircle} respectively) are valid at the same time \emph{for all $k \geqslant 1$}.

\begin{itemize}
\item  $   2N(2 (2)^{\frac{1}{2}}) e^{-\frac{1}{4 \{ (2)^{\frac{1}{2}} + 1 \}  (2\delta )^{k}  }} \leqslant \frac{\lambda}{2^{k+2}}   $ .
\item  $    \frac{1}{M^{\frac{k}{2}}\delta^{\frac{N}{2}(k+1)}} \|P(1)\|_{L^{2}(\mathbb{R}^{N})} \leqslant \frac{\lambda }{2^{k+2}}   $ .
\item  $    \frac{1}{M^{k}} \geqslant  C_{0}^{k} \{ \frac{1}{M^{k-3}} \}^{1+\frac{1}{N}}    $ .
\end{itemize}

With the technical support of the three conditions listed as above, we have also demonstrated that the propagation of condition \eqref{3} and the propagation of condition \eqref{6} mutually rely on each other in the following way.

\begin{itemize}
\item condition \eqref{3} at step $k$, together with condition \eqref{6} at step $k$, will give condition \eqref{3} at step $k+1$. 
\item condition \eqref{3} at step $k-3$, together with condition \eqref{6} at steps $k-3$, $k-2$, $k-1$, will give condition \eqref{6} at step $k$.
\end{itemize}

Because of this, we can conclude our proof for Lemma~\ref{Lemma1} by selecting some sufficiently small $\epsilon_{0}$ (in a way depending only on $N$ and the given constant $C_{\theta }$) such that the following two statements are true (This is sufficient because the validity of condition \eqref{6} for all $k\geqslant 1$ immediately gives the desired conclusion of Lemma \ref{Lemma1} ).

\begin{itemize}
\item $A_{k} \leqslant \frac{1}{M^{k}}$, for every $0 \leqslant k \leqslant 12N$. 
\item $\eta_{0} u_{0}^{*} = 0 $, for all $z \in [1 , 2]$.
\end{itemize}

To see the way in which the $\epsilon_{0}$ is selected, we just recall that the function $u = \beta [\theta -L]$ (with $|\beta | \geqslant  \frac{1}{C_{\theta }}$, and $|L| \leqslant 6 C_{\theta}$) under consideration is required to satisfy the hypothesis that

\begin{itemize}
\item  $u^{*} = \beta [\theta^{*} - L]$ verifies $u^{*}\leqslant 2$ on $[-4,0]\times B_{4}^{*}$, and that $\int_{-4}^{0}\int_{B_{4}^{*}} (u_{+}^{*})^{2} + \int_{-4}^{0}\int_{B_{4}} u_{+}^{2} \leqslant \epsilon_{0} $ . 
\end{itemize}

So, we may invoke inequality \eqref{C}, which we obtained in Step Two, to deduce that $u^{*} = \beta [\theta^{*} - L]$ must satisfy the following inequality \emph{for every $0 \leqslant k \leqslant 12N$}.

\begin{equation*}
A_{k}  \leqslant \int_{T_{k}}^{0} \int_{B_{2}^{*}} |\nabla (\eta_{k} \psi u_{k}^{*})|^{2}  + sup_{t\in [T_{k},0]} \int_{B_{2}} (\eta_{k} u_{k})^{2}(t,x) dx
\leqslant  C 2^{24N} (1+ \Phi) \epsilon_{0} ,               
\end{equation*}

where, in the above inequality, $C$ stands for some constant depending only on $N$ and we have implicitly use the fact that $|\nabla \eta_{k}|\leqslant C 2^{k}$, and $0 \leqslant k \leqslant 12N$. 

Because of the above inequality, we know that if $\epsilon$ satisfies 

\begin{equation*}
0 < \epsilon < \{ M^{12N} 2^{24N} C ( 1 + \Phi )           \}^{-1},
\end{equation*}
then it follows at once that condition \eqref{6} is valid for $1\leqslant k \leqslant 12N$. On the other hand, we also need to control the behavior of $u^{*}$ over $B_{2}\times [1,2]$ by the upper bound $2-2\lambda$ (because this will give $  u_{0}^{*} = \{ u^{*} -  (2-2\lambda )   \}_{+} \leqslant 0  $ on $B_{2}\times [1,2]$, and hence the validity of condition \eqref{3} at step $0$ ). To achieve this, we use the local energy inequality to observe that the following estimate is valid for all $z \in [1,2]$
\begin{itemize}
\item $  \|u_{+}\chi_{B_{2}} * P(z)\|_{L^{\infty}(\mathbb{R}^{N})} \leqslant \tilde C_\theta \|P(z)\|_{L^{2}(\mathbb{R}^{N}   )} (\epsilon_{0})^{\frac{1}{2}}$ ,
\end{itemize}
where, in the above estimate, $\tilde C_\theta$ stands for some constant depending on $N$ and $C_{\theta}$.  We now recall that the barrier function $b_{1}$ as constructed in Step 1 verifies $0 \leqslant b_{1} \leqslant 2-4\lambda$ on $B_{2}^{*}$. So, a simple application of the maximum principle to $u^{*}$ will give the following bound for $u^{*}$ over $B_{2}\times [1,2]$.

\begin{itemize}
\item $u^{*} \leqslant 2 - 4 \lambda +  \tilde C_\theta \|P(1)\|_{L^{2}(\mathbb{R}^{N}   )} (\epsilon_{0})^{\frac{1}{2}}$ on   $B_{2}\times [1,2]$.
\end{itemize}
This means that if we select any $\epsilon_{0}$ with $0 < \epsilon_{0} <  \{ \frac{2\lambda }{ \tilde C_\theta \|P(1)\|_{L^{2}(\mathbb{R}^{N}   )}  } \}^{2}$, it will follow at once that $  u_{0}^{*} = \{ u^{*} -  (2-2\lambda )   \}_{+} \leqslant 0  $ on $B_{2}\times [1,2]$, and hence the validity of condition \eqref{3} at step $0$. So, finally, we conclude that the $\epsilon_{0}$ as required in Lemma~\ref{Lemma1} can be any positive number less than $min \{ \{ M^{12N} 2^{24N} C ( 1 + \Phi )           \}^{-1} ,  \{ \frac{2\lambda }{ \tilde C_\theta \|P(1)\|_{L^{2}(\mathbb{R}^{N}   )}  } \}^{2}               \}$, and we have completed the proof for Lemma~\ref{Lemma1}.

\subsection{Proof of Lemma \ref{Lemma2}}
The proof uses the following lemma, the proof of which can be found in Appendix A of \cite{Driftdiffusion}.
\begin{lem}{{\bf( De-Giorgi Isoperimetric Lemma}} \cite{Driftdiffusion})
Let $\omega \in \dot H^1([-1, 1]^{N+1})$, and
\begin{align*}
\mathcal A = \{x: \omega(x) \leq 0\},\\
\mathcal B = \{x: \omega(x) \geq 1\},\\
\mathcal C = \{x: 0 < \omega(x) < 1\},
\end{align*}
then
\[
|\mathcal A||\mathcal B|\leq C_N \|\omega\|_{\dot H^1}|\mathcal C|^\frac{1}{2}.
\]
\end{lem}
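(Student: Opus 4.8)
The plan is to throw away the part of $\omega$ that does not matter and then invoke two classical facts — the coarea formula and the relative isoperimetric inequality on a cube. Write $n=N+1$ and $Q=[-1,1]^{n}$; if either $\mathcal A$ or $\mathcal B$ is Lebesgue-null the left-hand side vanishes and there is nothing to do, so assume $|\mathcal A|,|\mathcal B|>0$. The first step is to replace $\omega$ by the truncation $\bar\omega=\max\{0,\min\{\omega,1\}\}$. This $\bar\omega$ takes values in $[0,1]$, belongs to $W^{1,2}(Q)\subset W^{1,1}(Q)$, has $\nabla\bar\omega=\nabla\omega\,\chi_{\mathcal C}$ almost everywhere, and still satisfies $\mathcal A=\{\bar\omega=0\}$, $\mathcal B=\{\bar\omega=1\}$, $\mathcal C=\{0<\bar\omega<1\}$. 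In particular, by Cauchy--Schwarz,
\[
\|\nabla\bar\omega\|_{L^{1}(Q)}=\int_{\mathcal C}|\nabla\omega|\,dx\le|\mathcal C|^{1/2}\Big(\int_{Q}|\nabla\omega|^{2}\,dx\Big)^{1/2}=|\mathcal C|^{1/2}\,\|\omega\|_{\dot H^{1}},
\]
so the only thing left is to bound $|\mathcal A|\,|\mathcal B|$ by $\|\nabla\bar\omega\|_{L^{1}(Q)}$ up to a dimensional constant.

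For the geometric part I would slice $\bar\omega$ by its level sets. For almost every $t\in(0,1)$ the superlevel set $E_{t}=\{\bar\omega>t\}$ has finite perimeter in $Q$ and obeys $\mathcal B\subset E_{t}$, $\mathcal A\subset Q\setminus E_{t}$, hence $\min\{|E_{t}|,|Q\setminus E_{t}|\}\ge\min\{|\mathcal A|,|\mathcal B|\}$. Applying the relative isoperimetric inequality on the convex cube $Q$, namely $\min\{|E|,|Q\setminus E|\}^{(n-1)/n}\le c_{n}\,P(E;Q)$ with $c_{n}$ depending only on $n$, and integrating this over $t\in(0,1)$ against the coarea identity $\int_{0}^{1}P(E_{t};Q)\,dt=\int_{Q}|\nabla\bar\omega|\,dx$, I get
\[
\min\{|\mathcal A|,|\mathcal B|\}^{(n-1)/n}\le c_{n}\int_{0}^{1}P(E_{t};Q)\,dt=c_{n}\,\|\nabla\bar\omega\|_{L^{1}(Q)}\le c_{n}\,|\mathcal C|^{1/2}\,\|\omega\|_{\dot H^{1}}.
\]

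The remaining step is purely arithmetic: since $|\mathcal A|,|\mathcal B|\le|Q|=2^{n}$ one has $\min\{|\mathcal A|,|\mathcal B|\}\le 2\,\min\{|\mathcal A|,|\mathcal B|\}^{(n-1)/n}$ and $|\mathcal A|\,|\mathcal B|\le 2^{n}\min\{|\mathcal A|,|\mathcal B|\}$, whence
\[
|\mathcal A|\,|\mathcal B|\le 2^{\,n+1}\min\{|\mathcal A|,|\mathcal B|\}^{(n-1)/n}\le 2^{\,n+1}c_{n}\,|\mathcal C|^{1/2}\,\|\omega\|_{\dot H^{1}},
\]
which is the claimed inequality with a constant $C_{N}$ depending only on $N$. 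An equivalent route, avoiding the relative isoperimetric inequality, is to use the Sobolev representation $|\bar\omega(x)-(\bar\omega)_{Q}|\le c_{n}\int_{Q}|x-y|^{1-n}\,|\nabla\bar\omega(y)|\,dy$ valid for almost every $x$ in a convex domain, write $1=\bar\omega(y)-\bar\omega(x)\le|\bar\omega(y)-(\bar\omega)_{Q}|+|\bar\omega(x)-(\bar\omega)_{Q}|$ for $x\in\mathcal A$, $y\in\mathcal B$, integrate over $\mathcal A\times\mathcal B$, and then bound $\int_{\mathcal B}|y-w|^{1-n}\,dy\le c_{n}|\mathcal B|^{1/n}$ by the bathtub (rearrangement) principle before using $\|\nabla\bar\omega\|_{L^{1}}\le|\mathcal C|^{1/2}\|\omega\|_{\dot H^{1}}$ as above.

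I do not expect anything in this argument to be delicate beyond the two imported classical inputs — the relative isoperimetric inequality on the cube (or, in the alternative route, the Poincar\'e/Sobolev representation formula), together with the standard finite-perimeter and coarea technicalities needed to slice $\bar\omega$ — and the bookkeeping that keeps every constant dependent only on $n=N+1$. This is presumably exactly why the statement is quoted from the appendix of \cite{Driftdiffusion} rather than reproved here.
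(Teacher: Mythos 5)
Your argument is correct, but it is worth noting that the paper itself does not prove this lemma at all: it is quoted with an explicit pointer to Appendix~A of Caffarelli--Vasseur \cite{Driftdiffusion}, and the present authors reuse it as a black box inside the proof of Lemma~\ref{Lemma2}. Your primary route --- truncate to $\bar\omega$, observe $\nabla\bar\omega=\nabla\omega\,\chi_{\mathcal C}$ so that Cauchy--Schwarz injects the $|\mathcal C|^{1/2}$, then slice by level sets and combine the coarea formula with the relative isoperimetric inequality on the cube $[-1,1]^{n}$, $n=N+1$ --- is a genuinely different and arguably cleaner argument than the one in \cite{Driftdiffusion}, which instead writes $1=\bar\omega(y)-\bar\omega(x)$ for $x\in\mathcal A$, $y\in\mathcal B$, expresses the difference as a line integral, integrates over $\mathcal A\times\mathcal B$, and passes through a Riesz-potential/rearrangement estimate; that is exactly the ``equivalent route'' you sketch at the end. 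What the coarea/isoperimetric route buys you is a modular proof with the geometry isolated in one classical inequality and essentially no computation, at the cost of importing BV machinery (finite perimeter of a.e.\ level set, the coarea identity $\int_0^1 P(E_t;Q)\,dt=\int_Q|\nabla\bar\omega|$, and the relative isoperimetric inequality on a convex domain); what the line-integral route buys you is a self-contained, elementary derivation that does not leave $W^{1,1}$. Your bookkeeping --- $\min\{|\mathcal A|,|\mathcal B|\}\le 2\min\{|\mathcal A|,|\mathcal B|\}^{(n-1)/n}$ from $\min\le|Q|=2^n$, and $|\mathcal A||\mathcal B|\le 2^n\min\{|\mathcal A|,|\mathcal B|\}$ --- is right and correctly keeps the constant depending only on $N$. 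One cosmetic remark: since $Q$ is bounded, the hypothesis $\omega\in\dot H^1(Q)$ already gives $\bar\omega\in W^{1,2}(Q)\subset BV(Q)$, so the ``finite-perimeter and coarea technicalities'' you flag are in fact completely routine and could be invoked without apology.
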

We now present the proof of Lemma \ref{Lemma2}.  The proof is \emph{exactly} the same as the proof of Lemma 8 in \cite{Driftdiffusion}.  We only make changes to its presentation, which we believe make it easier to follow. For convenience, the following proof is also given in the setting in which the $L^{\infty}$ solution $\theta$ of the $N$-dimensional critical Burgers' equation is defined on $[-4,0]\times \mathbb{R}^{N}$ and the desired conclusion of Lemma~\ref{Lemma2} can be obtained by rescaling.\newline\indent
Start with choosing $\epsilon_1 \ll 1$.  Next, since $u^*\leq 2$ on $Q_4^*$, by the local energy inequality there exists some constant $C$ such that
\be\label{makeupsomething2}
\int^0_{-4}\int_{B_1^*} |\nabla u^\ast_+|^2dxdzdt \leq C_1.
\ee
Now we make two observations.  First, since $u^* \leq 2$ on $Q^*_4$, then
\[
(u^*-1)_+ \leq 1.  
\]
Second, if we let
\[
U= \left \{ (t,x,z) \in Q^*_{1}: u^*(t,x,z) \geq 1 \right\},
\]
then
\be
\int_{Q^*_{1}}([u^*-1]_+)^2dtdxdz=\int_U([u^*-1]_+)^2dtdxdz.
\ee
It follows that if we can show that the measure of the set $U$ satisfies $|U|\leq \frac{\epsilon_1}{2}(1+\sqrt{C_1})$, then
\begin{equation}\label{makeupsomething}
 \begin{split}
\int_{Q^*_{1}}([u^*-1]_+)^2dtdxdz&=\int_U([u^*-1]_+)^2dtdxdz\\
&\leq\int_U 1 dtdxdz
\leq \frac{\epsilon_1}{2}(1+\sqrt{C_1}),
\end{split}
\end{equation}
which gives the first part of \eqref{mlemm1} for $\epsilon_1$ small enough and $D_\theta$ chosen as below in \eqref{Dthetadef}.  The second part follows from this one since following exactly \cite{Driftdiffusion}, for $t,x$ fixed
\be
u_{+}(t,x)=u_{+}^*(t,x,z)-\int^z_0\partial_zu_{+}^*(t,x,z) d\bar z,
\ee
and
\be
(u-1)^2_+\leq 2\left( (u^*(z)-1)^2_++\{\int^z_0\partial_zu_{+}^* d\bar z\}^2\right).
\ee
Now take the average in $z$ on $[0,\sqrt{\epsilon_1}]$ to get
\begin{align}
(u-1)^2_+&\leq \frac{2}{\sqrt{\epsilon_1}}\left(\int^{\sqrt{\epsilon_1}}_0(u^*(z)-1)^2_+dz+\int^{\sqrt{\epsilon_1}}_0\{\int^z_0\partial_z u_{+}^* d\bar z\}^2dz)\right)\\
&\leq \frac{2}{\sqrt{\epsilon_1}} \left( \int^{\sqrt{\epsilon_1}}_0(u^*(z)-1)^2_+dz+\sqrt{\epsilon_1}\{\int^{\sqrt{\epsilon_1}}_0|\partial_zu_{+}^*| d\bar z\}^2\right)\\
&\leq \frac{2}{\sqrt{\epsilon_1}} \int^1_0(u^*(z)-1)^2_+dz+2\sqrt{\epsilon_1}\int^{1}_0|\nabla u_{+}^*|^2 dz.
\end{align}
Therefore on $Q_1$ we obtain
\begin{align}
\int_{Q_1}(u-1)^2_+dxds\leq \frac{2}{\sqrt{\epsilon_1}}\int_{Q_1^*}(u^*(z)-1)^2_+dxdzds+2\sqrt{\epsilon_1}\int_{Q^*_1}|\nabla u_{+}^*|^2 dxdzds\\
\leq (1+\sqrt{C_1})\sqrt{\epsilon_1}+2\sqrt{\epsilon_1}C_1, \quad\mbox{by}\quad \eqref{makeupsomething}, \eqref{makeupsomething2}.
\end{align}
Then we let
\be\label{Dthetadef}
D_\theta=1+\sqrt{C_1}+2C_1
\ee
and we would be finished.  Now, to establish $|U|\leq \epsilon_1$ note
\be\label{ml1}
|U|=\int^0_{-1}|\mathcal B(t)| dt,
\ee
where $\mathcal B(t)=\{(x,z)\in B^*_1: u^*(t,x,z)\geq 1\}$.  Define 
\be\label{mdefK}
K=\frac{4|B_1^*|\int^0_{-4}\int_{B_1^*} |\nabla u^\ast_+|^2dxdzdt}{\epsilon_1}.
\ee
We write
\[
[-4,0]=I \cup I^c,
\]
with 
\[
I=\big\{ t\in [-4,0]:\quad |\mathcal C(t)|^\frac{1}{2}\leq \epsilon_1^3\quad\mbox{and}\quad\int_{B^\ast_{1}}
|\nabla u^*_+|^2dxdz \leq K\big\}.
\]
If we can show $|I^c| \leq \frac{\epsilon_1}{2|B^*_1|}$ then \eqref{ml1} would become
\be\label{boundsonU}
\begin{split}
|U|=\int^0_{-1}|\mathcal B(t)| dt&= \int_{I^c\cap[-1,0]}|\mathcal B(t)| dt + \int_{I\cap[-1,0]}|\mathcal B(t)| dt\\
&\leq\frac{\epsilon_1}{2} + \sup_t|\mathcal B(t)|.
\end{split}
\ee
To estimate $|\mathcal B(t)|$ on $I\cap[-1,0]$ we use the De-Giorgi's Isoperimetric Lemma to obtain
\be\label{boundsonB}
|\mathcal B(t)|\leq \frac{\mathcal C(t)^\frac{1}{2}K^\frac{1}{2}}{|\mathcal A(t)|}.
\ee
If we could show $|\mathcal A(t)|\geq \frac{1}{4}$ on $I \cap [-1,0]$, \eqref{boundsonB} would imply
\[
 |\mathcal B(t)|\leq \sqrt{C_1}\frac{\epsilon_1}{2}\quad\mbox{on}\quad I\cap[-1,0],
\]
for $\epsilon_1$ small enough.
Therefore \eqref{boundsonU} would give us $|U|\leq \frac{\epsilon_1}{2}(1+\sqrt{C_1})$.  What is left to show is that $|I^c| \leq \frac{\epsilon_1}{2| B^*_1|}$ and that on $I\cap[-1,0]$, $|\mathcal A(t)|\geq \frac{1}{4}$. 
Start with the former and write $I^c=I^c_1 \cup I^c_2$ where
\[
I^c_1=\{t\in [-4,0]: \int_{B^\ast_{1}}|\nabla u^*_+|^2dxdz > K\} \quad\mbox{and}\quad  I^c_2=\{ t\in [-4,0]: |\mathcal C(t)|^\frac{1}{2}> \epsilon_1^3\}. 
\]
First
\begin{align*}
K|I^c_1|=\int_{I^c_1}Kdt&<\int_{I^c_1}\int_{B^\ast_{1}}|\nabla u^*_+|^2dxdzdt\\
&\leq\int^0_{-4}\int_{B^\ast_{1}}|\nabla u^*_+|^2dxdzdt\\
&=\frac{K}{4|B^*_1|}\epsilon_1\quad\mbox{by}\quad \eqref{mdefK}.
\end{align*}
Now set
\[
 \delta_1=\frac{\epsilon_1^8}{|B^*_1|}.
\]
Then for $I^c_2$ we have
\begin{align*}
|I^c_2|&<\frac{\int^0_{-4} |\mathcal C(t)| dt}{\epsilon_1^6}=\frac{|\{(t,x,z) \in Q^*_4 : 0 <u^*<1\}|}{\epsilon_1^6}
       <\frac{\delta_1}{\epsilon_1^6}
       &=\frac{\epsilon_1^2}{|B^*_1|}
       &\leq \frac{\epsilon_1}{4|B^*_1|}\quad\mbox{if $\epsilon_1$ small enough}.
\end{align*}
Hence $|I^c| \leq \frac{\epsilon_1}{2|B^*_1|}$ as needed.  Next we show $|\mathcal A(t)| \geq \frac{1}{4}$ for $t \in I\cap[-1,0]$.  We construct a sequence satisfying
\[
0\geq t_n\geq t_0+n\frac{\delta^*}{2}
\]
such that 
\be\label{somethingaboutA}
|\mathcal A(t)|\geq \frac{1}{4}\quad\mbox{on}\quad [t_n,t_n +\delta^*]\cap I \supset [t_n,t_{n+1}]\cap I.
\ee
We continue till $t_n +\delta^* \geq 1$, because then we can conclude $|\mathcal A(t)|\geq \frac{1}{4}$.  To pick the first element of the sequence we use the hypothesis that
\[
|\{(t,x,z)\in Q^*_1: u^*(t,x,z)\leq 0\}|\geq \frac{|Q^*_1|}{2}.
\]
Moreover, since $|I^c| \leq \frac{\epsilon_1}{2|B^*_1|}$ we can find some $t_0\in I \cap [-4,-1]$ such that $|\mathcal A(t_0)| \geq \frac{1}{4}$.  Next we would like to show
\be\label{1over64}
\int_{B_1}u^2_+(t)dx \leq \frac{1}{64} 
\ee
for every $t\in I$ and $t\geq t_0$ sufficiently close to $t_0$.  First, we consider $(u^\ast_+)^2(t_0)$ on $B^\ast_1$
\be\label{mepsilon2}
\begin{split}
\int_{B_1^*}(u^\ast_+)^2(t_0)dxdz &= \int_{\mathcal B(t_0)\cup \mathcal C(t_0)}(u^\ast_+)^2(t_0)dxdz\\
&\leq 4(|\mathcal B(t_0)|+ |\mathcal C(t_0)|)\\
&\leq 4(\epsilon_1^2+ \epsilon_1^6)\quad\mbox{since}\quad t\in I\\
&\leq 8\epsilon_1^2.
\end{split}
\ee
Second, for any $t$ and $z$
\[
\int_{B_1}u^2(t)dx=\int_{B_1} {u^*_+}^2(t,x,z)dx-2\int^z_0\int_{B_1}{u^*_+}(t)\partial_zu^*dxd\bar z,
\]
which once integrated in $z$ on $[0,1]$ gives for $t=t_0$
\begin{align}
\int_{B_1}u^2(t_0)dx &\leq\int_{B^*_1} {u^*_+}^2(t_0,x,z)dxdz+2\|u^*_+(t_0)\|_{L^2(B^*_1)}\|\nabla u^*_+(t_0)\|_{L^2(B^*_1)}\\
&\leq8\epsilon_1^2+4\sqrt{2}K^\frac{1}{2}\epsilon_1\quad\mbox{by \eqref{mepsilon2} and}\quad t\in I\\
&\leq C\sqrt{\epsilon_1^2}\quad\mbox{by \eqref{mdefK}}.
\end{align}
Now choosing an $\eta$ so that $\eta\rvert_{B_1^*}\leq1$, $\eta\leq r$ outside of $B_1^*$ and such that $|\nabla\eta|\sim\frac{1}{r}$, (where $r$ is to be chosen shortly) from the local energy inequality we have 
\[
\int_{B_1}u^2(t)dx \leq\int_{B_1} {u^*_+}^2(t_0)dx+ Cr +\frac{C(t-t_0)}{r}\leq C\sqrt{\epsilon_1^2}+ Cr +\frac{C(t-t_0)}{r}.
\]
Let $r$ be chosen so that
\[
Cr+C\sqrt{\epsilon_1}\leq \frac{1}{128},
\]
then for $t-t_0\leq \delta^*=\frac{r}{128C}$ \eqref{1over64} follows.\newline\indent  
Next we use \eqref{1over64} to get some preliminary lower bounds on the measure of $\mathcal A(t)$.  To begin with for $t \in I, t-t_0\leq\delta^*$ and $z\leq \epsilon_1^2$ write
\[
u^\ast_+(t,x,z)=u_+(t,x) +\int^z_0\partial z u^*_+(t,x,\bar z) d\bar z\leq u_+(t,x) +(\epsilon_1^2\int^1_0|\partial z u^*_+|^2 d\bar z)^\frac{1}{2}.
\]
Hence
\[
{u^\ast_+}^2(t,x,z)\leq 2 (u_+^2(t,x) +\epsilon_1^2\int^1_0|\partial z u^*_+|^2 d\bar z),
\]
and
\begin{align*}
\int_{B_1}{u^\ast_+}^2(t,x,z)dx &\leq 2 (\int_{B_1}u_+^2(t,x)dx +\epsilon_1^2\int_{B^*_1}|\partial_z u^*_+|^2 d\bar zdx)\\
&\leq 2 (\frac{1}{64} +\epsilon_1^2K)\\
&\leq 2 (\frac{1}{64} +4C\epsilon_1)\\
&\leq \frac{1}{4}.
\end{align*}
Then by Chebyshev's inequality, for every fixed $z\leq \epsilon_1$ we have
\[
|\{x\in B_1: u^*_+(t,x,z)\geq 1\}|\leq \frac{1}{4},
\]
which we now integrate in $z$ on $[0,\epsilon^2_1]$ to obtain
\[
|\{z\leq\epsilon^2_1, x\in B_1: u^*_+(t,x,z)\geq 1\}|\leq \frac{\epsilon^2_1}{4}.
\]
By definition of $\mathcal A(t)$ and $\mathcal C(t)$ we have
\[
B_1 \times [0,\epsilon^2_1] \subset \mathcal A(t) \cup \{z\leq\epsilon^2_1, x\in B_1: u^*_+(t,x,z)\geq 1\}\cup \mathcal C(t).
\]
Therefore
\begin{align*}
|\mathcal A(t)|&\geq |B_1|\epsilon^2_1-|\{z\leq\epsilon^2_1, x\in B_1: u^*_+(t,x,z)\geq 1\}|-|\mathcal C(t)|\\
&\geq \epsilon^2_1-\frac{\epsilon^2_1}{4}-\epsilon^6_1\geq \frac{\epsilon^2_1}{2}.
\end{align*}
Using \eqref{boundsonB} we have
\[
|\mathcal B(t)|\leq \tilde C\sqrt{\epsilon_1},
\]
and
\begin{align}
|\mathcal A(t)| \geq 1-|\mathcal B(t)|-|\mathcal A(t)|,\\
\geq 1-2\sqrt{\epsilon_1}-\epsilon_1^6\leq \frac{1}{4}.
\end{align}
as needed.  To pick the next element of the sequence we look at the interval $[t_0+\frac{\delta^*}{2},t_0+\delta^*]$ and observe that there must be some $t_1$ in that interval so that $t_1 \in I$.  We automatically also have $|\mathcal A(t_1)|\geq \frac{1}{4}$, so we can repeat the argument.  Again, we continue till $t_n +\delta^* \geq 1$ and since at each step  \eqref{somethingaboutA} holds, we can conclude $|\mathcal A(t)|\geq \frac{1}{4}$ as needed.

\section{Higher Regularity: Proof of Corollary \ref{cor1}.}\label{higher}
Extending H\"older continuity to higher regularity is not very difficult.  Indeed what is done in \cite{Driftdiffusion} in Appendix B can be applied here as well.  Therefore we only show how to set up the proof.  However, since some technical details are omitted in \cite{Driftdiffusion} for showing the solution is $C^\alpha$ for all $\alpha<1$, we illuminate them here.\newline\indent
Let $\theta : (0,\infty )\times \mathbb{R}^{N} \rightarrow \mathbb{R}$ be a solution
of the $N$-dimensional critical Burgers' equation which is essentially bounded and
locally Holder's continuous on $[\tau , \infty )\times \mathbb{R}^{N}$. That is,
\[
\theta \in L^{\infty}([\tau , \infty )\times \mathbb{R}^{N}) \cap C^{\alpha}_{loc}
([\tau , \infty )\times \mathbb{R}^{N})\]
for some $0 < \alpha < 1$.

Fix $y_{0} = (t_{0} , x_{0}) \in (\tau , \infty )\times \mathbb{R}^{N}$.  Without the loss of generality, we may assume\footnote{Otherwise replace the solution by $w(t,x) = \theta
(t, x + u(t-t_{0})) - \theta (y_{0})$, where $u$ is a vector in $\R^N$ with each entry equal to $\theta (y_{0})$.}
\[
\theta (y_{0}) = 0.
\]
Consider now the Poisson kernel
\[
P(t,x) = C_{N} \frac{t}{(t^{2} +
|x|^{2})^{\frac{N+1}{2}}},
\]
which is the fundamental solution for
\[
\partial_{t} + (-\triangle )^{\frac{1}{2}}=0.
\]
If $\theta_{0}= \theta (0,\cdot )$ is the initial datum for our solution $\theta$, using Duhamel's principle we have
\begin{equation*}
\begin{split}
\theta(t,x)
&=P(t,\cdot )* \theta_{0}(x) - \int_{0}^{t} P(t - t_{1} ,\cdot )* \sum_{j=1}^{N} (\theta \cdot
\partial_{j}\theta )(t_{1} ,\cdot)(x) dt_{1}\\
& =P(t,\cdot )* \theta_{0}(x) -\frac{1}{2} \sum_{j=1}^{N}  \int_{0}^{t}\int_{\mathbb{R}^{N}} \partial_{j}P(t -
t_{1} ,x - x_{1} )
\theta^{2}(t_{1}, x_{1}) dx_{1}dt_{1} .
\end{split}
\end{equation*}
Since $P(t,\cdot )* \theta_{0}$ is known to be $C^{\infty }$, we just need to examine
\[
 g(t,x)=-\frac{1}{2} \sum_{j=1}^{N}  \int_{0}^{t}\int_{\mathbb{R}^{N}} \partial_{j}P(t -
t_{1} ,x - x_{1} )
\theta^{2}(t_{1}, x_{1}) dx_{1}dt_{1}.
\]
For convenience, we now extend $P(t,x)$ to the whole space-time
$\mathbb{R} \times \mathbb{R}^{N}$ by \emph{requiring that $P(t, \cdot ) = 0$,
whenever $t < 0$}. With such an extension
\begin{equation*}
\begin{split}
g(t,x) &=
  -\frac{1}{2} \sum_{j=1}^{N}  \int_{-\infty}^{\infty}\int_{\mathbb{R}^{N}} 
\chi_{\{0 \leqslant t-t_{1}\leqslant t\}} \partial_{j}P(t - t_{1} ,x - x_{1} )
\theta^{2}(t_{1}, x_{1}) dx_{1}dt_{1}\\
& = -\frac{1}{2} \sum_{j=1}^{N}  \int_{\mathbb{R}^{N+1}} 
\chi_{\{t_{1}\geqslant 0\}} \partial_{j}P(t - t_{1} ,x - x_{1} )
\theta^{2}(t_{1}, x_{1}) dx_{1}dt_{1} .
\end{split}
\end{equation*}

 Our first task is to estimate the difference  $g(y_{0} + h e) -g(y_{0})$,
where $e\in \mathbb{R}^{N+1}$ with $|e| = 1$, and $h$ is some sufficiently small
positive number. We observe that
\begin{equation*}
g(y_{0} + h e) - g(y_{0}) =  -\frac{1}{2} \sum_{j=1}^{N}  \int_{\mathbb{R}^{N+1}} 
\chi_{\{t_{1}\geqslant 0\}} \{\partial_{j}P(y_{0} + h e  -y_{1}) -
\partial_{j}P(y_{0} - y_{1})\}
\theta^{2}(y_{1}) dy_{1} .
\end{equation*}
Therefore, we need to estimate the following two terms for each
$1\leqslant j \leqslant N$
\begin{align*}
A_{1} &=
\int_{B(y_{0},10h)} 
\chi_{\{t_{1}\geqslant 0\}} \partial_{j}P(y_{0} + h e  -y_{1}) \theta^{2}(y_{1})
dy_{1} - \int_{B(y_{0},10h)} \chi_{\{t_{1}\geqslant 0\}} \partial_{j}P(y_{0} -y_{1})
\theta^{2}(y_{1}) dy_{1},\\
A_{2} &= \int_{\mathbb{R}^{N+1}-B(y_{0},10h)} 
\chi_{\{t_{1}\geqslant 0\}} \{\partial_{j}P(y_{0} + h e  -y_{1}) -
\partial_{j}P(y_{0} - y_{1})\}
\theta^{2}(y_{1}) dy_{1},
\end{align*}
where $B(y_{0}, 10h) = \{ y_{1} \in \mathbb{R}^{N+1} : |y_{1} - y_{0}| < 10h  \}$.
Start with $A_1$.  Since 
\[
|\theta (y_{1})| = |\theta (y_{1}) - \theta (y_{0})| \leqslant C |y_{1} -
y_{0}|^{\alpha},
\]
and 
\[
|\nabla_{x} P (t,x)| \leqslant \frac{C_{N}(N+1)}{2} \frac{1}{(t^{2} +
|x|^{2})^{\frac{N+1}{2}}} = \frac{C_{N}(N+1)}{2y^{N+1}},
\]
we recognize that the second integral in the expression for $A_{1}$ can be controlled by 
\begin{equation}\label{Higherregularityone}
\begin{split}
| \int_{B(y_{0},10h)} \chi_{\{t_{1}\geqslant 0\}} \partial_{j}P(y_{0} -y_{1})
\theta^{2}(y_{1}) dy_{1}   | &\leqslant
 C \int_{B(y_{0}, 10h)}  \frac{1}{|y_{1} - y_{0}|^{N+1-2\alpha }} dy_{1}\\
& = C \int_{0}^{10h}\frac{r^{(N+1)-1}}{r^{N+1-2\alpha }} dr = C(N,\alpha) h^{2\alpha} .
\end{split}
\end{equation}
Next, to control the first term in the expression for $A_{1}$, we need the
following observation
\begin{itemize}
\item For every $t > 0$, $\partial_{j}P (t,\cdot )$ is an odd function in the
$x$-variable. Hence, the average value of $\partial_{j}P (t,\cdot )$ over any disc
$\{t\}\times \{ x\in \mathbb{R}^{N} : |x| < R \}$ centered at the $t$-axis must be
zero.
\end{itemize}
By the virtue of the above observation, it is easy to see 
\begin{equation*}
\int_{B(y_{0} + h e , 10h )} \chi_{\{t_{1} \geqslant 0\}} \partial_{j}P (y_{0} + h e
- y_{1}) dy_{1} = 0 . 
\end{equation*}
Then observe we can write
\begin{equation*}
\int_{B(y_{0}  , 10h )} \chi_{\{t_{1} \geqslant 0\}} \partial_{j}P (y_{0} + h e -
y_{1}) (\theta (y_{1}))^{2} dy_{1} = A_{11} + A_{12} - A_{13} ,  
\end{equation*}
where
\begin{itemize}
\item $A_{11} =  \int_{B(y_{0},10h)- B(y_{0} + h e , 10h)} 
\chi_{\{t_{1}\geqslant 0\}} \partial_{j}P(y_{0} + h e  -y_{1}) (\theta (y_{1}))^{2}
dy_{1}$ .
\item $ A_{12} =   \int_{B(y_{0},10h)\cap B(y_{0} + h e , 10h)} 
\chi_{\{t_{1}\geqslant 0\}} \partial_{j}P(y_{0} + h e  -y_{1}) \{(\theta
(y_{1}))^{2} - (\theta (y_{0} + he))^{2}  \} dy_{1} $.
\item $A_{13}  = \int_{B(y_{0} + h e , 10h)- B(y_{0} , 10h)}  \chi_{\{t_{1}\geqslant
0\}} \partial_{j}P(y_{0} + h e  -y_{1}) ( \theta (y_{0} + h e))^{2} dy_{1}   $.
\end{itemize}
We first look at $A_{12}$. By the Holder's continuity of $\theta$, we have 
\begin{equation*}
\begin{split}
| (\theta (y_{0} + h e))^{2}  - (\theta (y_{1}))^{2}    |
& \leqslant |\theta (y_{0} + h e) \{ \theta (y_{0} + h e) -\theta (y_{1}) \}|+ |\theta (y_{1}) \{ \theta (y_{0} + h e ) -\theta (y_{1})     \}|  \\
& \leqslant C (h^{\alpha } + |y_{1} - y_{0}|^{\alpha} ) |y_{0} + h e -
y_{1}|^{\alpha} .
\end{split}
\end{equation*}
If we further use that $y_{1} \in B(y_{0},10h)\cap B(y_{0} + h e , 10h)
$, the above inequality tells us
\begin{equation*}
| (\theta (y_{0} + h e))^{2}  - (\theta (y_{1}))^{2}| \leqslant C \{1 +
10^{\alpha}\} h^{\alpha} |y_{0} + h e - y_{1}|^{\alpha} . 
\end{equation*}
Thus
\begin{equation}\label{Higherregularity2}
|A_{12}| \leqslant C \int_{B( y_{0} + h e, 10h )} \frac{h^{\alpha}}{|y_{0} + h e -
y_{1}|^{N+1-\alpha} } dy_{1} = C h^{2\alpha } . 
\end{equation}
On the other hand, the terms $A_{11}$ and $A_{13}$ can be handled in the following way
\begin{equation}\label{Higherregularity3}
\begin{split}
|A_{11}| 
& \leqslant C \int_{B(y_{0},10h)- B(y_{0} + h e , 10h)} |\partial_{j}P (y_{0} + h e
-y_{1})|\cdot |y_{1}-y_{0}|^{2\alpha} dy_{1}\\
& \leqslant (10h)^{2\alpha} C \int_{B(y_{0},10h)- B(y_{0} + h e , 10h)} \frac{1}{
|y_{0} + h e - y_{1}|^{N+1} } dy_{1} \\
& \leqslant C h^{2\alpha} \int_{\{  10h \leqslant |y_{0} + h e - y_{1}| \leqslant 11
h \}}\frac{1}{ |y_{0} + h e - y_{1}|^{N+1} } dy_{1} \\
& = C h^{2\alpha} \log(\frac{11}{10}),
\end{split}
\end{equation}
and 
\begin{equation}\label{Higherregularity4}
\begin{split}
|A_{13}|
& \leqslant  \int_{B(y_{0} + h e , 10h)- B(y_{0} , 10h)}  |\partial_{j}P(y_{0} + h e
 -y_{1})| ( \theta (y_{0} + h e))^{2} dy_{1} \\
& \leqslant C h^{2\alpha} \int_{\{  9h \leqslant |y_{0} + h e - y_{1}| \leqslant 10
h \}} \frac{1}{ |y_{0} + h e - y_{1}|^{N+1} } dy_{1} \\
&= C h^{2\alpha} \log(\frac{10}{9}) .
\end{split}
\end{equation}
By combining \eqref{Higherregularity2}, \eqref{Higherregularity3}, \eqref{Higherregularity4}, we can conclude 
\[
\int_{B(y_{0}  , 10h )} \chi_{\{t_{1}
\geqslant 0\}} \partial_{j}P (y_{0} + h e - y_{1}) (\theta (y_{1}))^{2} dy_{1}\leq C h^{2\alpha},    
\]
which with \eqref{Higherregularityone} implies
\[
|A_{1}| \leqslant C h^{2\alpha}.
\]
To complete the estimate for $|g(y_{0}+ h e) - g(y_{0})|$, we also
need to control $|A_{2}|$.  For this purpose, we first recall the derivatives of
$\partial_{j}P$
\begin{itemize}
\item $\partial_{i} \partial_{j}P (t,x) = -C_{N}(N+1)t\{
\frac{\delta_{ij}}{(t^{2}+|x|^{2})^{\frac{N+3}{2}}}  - \frac{(N+3)x_{i}x_{j}}{
(t^{2}+|x|^{2})^{\frac{N+3}{2} +1}            }  \} $ .
\item $\partial_{t} \partial_{j} P (t,x) = -C_{N}(N+1)x_{j}  
 \{ \frac{1}{(t^{2}+|x|^{2})^{\frac{N+3}{2}}}  - \frac{(N+3)t^{2}}{
(t^{2}+|x|^{2})^{\frac{N+3}{2} +1}            }  \}  $
\end{itemize}
Then, it follows directly from the above two identities that
\[
|\nabla \partial_j P (y)| \leqslant \frac{C}{|y|^{N+2}},\quad \forall y = (t,x) \in
\mathbb{R}^{N+1}-{(0,0)}.
\]
Therefore for all $y_{1}
\in \mathbb{R}^{N+1} - B(y_{0} , 10h)$
\begin{equation*}
\begin{split}
|\partial_{j}P (y_{0} + h e - y_{1})  - \partial_{j}P (y_{0} - y_{1})|
& \leqslant \int_{0}^{h} |\nabla \partial_{j}P (y_{0} +\lambda e - y_{1})| d\lambda \\
& \leqslant \int_{0}^{h} \frac{C}{| y_{0} +\lambda e - y_{1}    |^{N+2}} d\lambda \\
& \leqslant \int_{0}^{h} \frac{C}{ \frac{9}{10} |y_{1} -y_{0}|^{N+2}} d\lambda = C \frac{h}{|y_{1} - y_{0}|^{N+2}} ,
\end{split}
\end{equation*}
where use the fact that for any $y_{1} \in
\mathbb{R}^{N+1} - B(y_{0} , 10h)$, we have $|y_{1} + \lambda e - y_{0}|\geqslant
|y_{1} - y_{0}|-h \geqslant \frac{9}{10} |y_{1} - y_{0}|$. As a result, we can
control $|A_{2}|$ as follows if we have $2\alpha <1$
\begin{equation*}
\begin{split}
|A_{2}|
&\leqslant \int_{\{  |y_{1} -y_{0}| \geqslant 10h  \}}  |\partial_{j}P (y_{0} + h e
- y_{1})  - \partial_{j}P (y_{0} - y_{1})| (\theta (y_{1}))^{2} dy_{1} \\
& \leqslant \int_{\{  |y_{1} -y_{0}| \geqslant 10h  \}} \frac{Ch}{|y_{1} -
y_{0}|^{N+2-2\alpha } } dy_{1}\\
& = C h \int_{10h}^{\infty} r^{-2+2\alpha } dr = h^{2\alpha} .
\end{split}
\end{equation*}
So we conclude that if $\alpha$ satisfies $2\alpha < 1$, then, we must have
\[ 
|g(y_{0} + h e) - g(y_{0})| \leqslant C h^{2\alpha },
\]
for all sufficiently small $h > 0$. This means that $\theta$ must be of class $C^{2\alpha}$ also, provided
$\theta$ is of class $C^{\alpha}$.  By bootstrapping the above argument, we may now
conclude that our locally Holder's continuous function $\theta$ is of class
$C^{\gamma}$, for any $0 < \gamma <1$.\newline\indent
To go beyond Lipschitz and obtain the $C^{1,\beta}$ regularity for $\theta$, we just
need to follow the argument in the second part of Appendix B \cite{Driftdiffusion}. 
\bibliography{chihinsbib}
\bibliographystyle{plain}
\vspace{.125in}

\end{document}